\pdfoutput=1
\documentclass{louloupart1}
\usepackage{mathtools}
\usepackage{graphicx}
\usepackage{hyperref}
\usepackage{bm}
\usepackage{tikz}
\usetikzlibrary{shapes.geometric,calc}
\usetikzlibrary{decorations.markings,arrows,arrows.meta,positioning,cd}
\tikzset{->-/.style={decoration={
  markings,
  mark=at position #1 with {\arrow{Computer Modern Rightarrow[length=5pt,width=5pt]}}},postaction={decorate}}}
\tikzset{->-rev/.style={decoration={
  markings,
  mark=at position #1 with {\arrow{Computer Modern Rightarrow[length=5pt,width=5pt,reversed]}}},postaction={decorate}}}

\title[Property $R_\infty$ for new classes of Artin groups]{Property $R_\infty$ for new classes of Artin groups}
\author[I Soroko]{Ignat Soroko}
\givenname{Ignat}
\surname{Soroko}
\address{Ignat Soroko, Department of Mathematics, Southern Methodist University, Dallas, TX 75205, USA}
\email{isoroko@smu.edu}
\author[N Vaskou]{Nicolas Vaskou}
\givenname{Nicolas}
\surname{Vaskou}
\address{Nicolas Vaskou, Section de math\'ematiques, Universit\'e de Gen\`eve, Rue du Conseil-G\'en\'eral 7--9 1205, Gen\`eve, Switzerland}
\email{nicolas.vaskou@unige.ch}
\date{\today}

\newtheorem{thm}{Theorem} 
\newtheorem{lem}[thm]{Lemma}

\newtheorem*{delzantlemB}{Delzant's Lemma}
\newtheorem*{penners}{Penner's Construction}
\newtheorem{prop}[thm]{Proposition}
\newtheorem{corl}[thm]{Corollary}

\newtheorem*{conjR}{Conjecture (R)}
\newtheorem*{conjNA}{Conjecture (NA)}
		
\theoremstyle{definition}

\newtheorem{defin}[thm]{Definition}

\newtheorem{rem}[thm]{Remark}

\newtheorem*{acknow}{Acknowledgments}

\numberwithin{equation}{section}

\raggedbottom


\begin{document}

\def\N{\mathbb N} \def\Inn{{\rm Inn}} \def\Out{{\rm Out}} \def\Z{\mathbb Z}
\def\id{{\rm id}} \def\supp{{\rm supp}} 
\renewcommand{\Im}{\operatorname{Im}} 
\def\Ker{{\rm Ker}} \def\PP{\mathcal P} \def\Homeo{{\rm Homeo}}
\def\SHomeo{{\rm SHomeo}} \def\LHomeo{{\rm LHomeo}}
\def\MM{\mathcal M} \def\CC{\mathcal C} \def\AA{\mathcal A}
\def\S{\mathbb S} \def\FF{\mathcal F} \def\SS{\mathcal S}
\def\LL{\mathcal L} \def\D{\mathbb D} 

\newcommand{\Cyl}{\operatorname{Cyl}}
\newcommand{\Aut}{\operatorname{Aut}}
\newcommand{\conj}{\operatorname{conj}}
\newcommand{\card}{\operatorname{Card}}
\newcommand{\Sym}{\operatorname{Sym}}
\newcommand{\Cent}{\operatorname{Cent}}
\newcommand{\Fix}{\operatorname{Fix}}
\newcommand{\Isom}{\operatorname{Isom}}
\newcommand{\ov}{\overline}
\mathchardef\mhyphen="2D
\renewcommand{\le}{\leqslant}
\renewcommand{\ge}{\geqslant}
\newcommand{\lk}{\operatorname{lk}}

\begin{abstract}
We establish property $R_\infty$ for Artin groups of spherical type $D_n$, $n\ge6$, their central quotients, and also for large hyperbolic-type free-of-infinity Artin groups and some other classes of large-type Artin groups. The key ingredients are recent descriptions of the automorphism groups for these Artin groups and their action on suitable Gromov-hyperbolic spaces. We also provide a detailed proof of Delzant's Lemma, an important technical tool used in our work and in several other papers on the $R_\infty$ property.

\smallskip\noindent
{\bf 2020 Mathematics Subject Classification~} 
Primary 20F36; Secondary 20F65, 57K20, 20E36, 20E45, 57M07.

\smallskip\noindent
{\bf Keywords\ \ } 
Artin groups, large-type, free-of-infinity, property $R_\infty$, twisted conjugacy.

\end{abstract}

\maketitle


\section{Introduction}\label{sec1}

Let $G$ be a group and let $\varphi$ be an automorphism of $G$. We say that elements $g,h\in G$ are \emph{$\varphi$-twisted conjugate} if there exists $x\in G$ such that $h=x\,g\, \varphi(x)^{-1}$. This defines an equivalence relation on $G$, and the (possibly infinite) number of its equivalence classes is called the \emph{Reidemeister number} of $\varphi$, denoted by $R(\varphi)$. We say that~$G$ \emph{has property $R_{\infty}$} if $R(\varphi)=\infty$ for all $\varphi\in \Aut(G)$. 

The notion of twisted conjugacy arises in Nielsen--Reidemeister fixed point theory, where under certain natural conditions the Reidemeister number serves as an upper bound for a homotopy invariant called the Nielsen number. Also, twisted conjugacy classes appear naturally in Arthur--Selberg theory, Galois cohomology, the twisted Burnside--Frobenius theory, nonabelian cohomology, and in some topics of algebraic geometry. See~\cite{TabWon1,FelTro2} and references therein.

The problem of determining which groups have property $R_\infty$ was started in~\cite{FelHil1}, and has since been an area of active research. The list of groups known to have property $R_\infty$ is quite large and contains non-elementary Gromov-hyperbolic and relatively hyperbolic groups, non-abelian generalized Baumslag--Solitar groups, many weakly branch groups, many arithmetic linear groups, mapping class groups of surfaces with large enough complexity, and some other non-amenable groups, see~\cite{FelTro2,FelNas1} for references. On the other hand, the free nilpotent group $N_{r,c}$ of rank $r$ and the nilpotency class $c$ has property $R_\infty$ if and only if $c\ge2r$, see~\cite{DekGon1}. For some other recent developments, see~\cite{DekLat1,SgSiVe1,Troit0,Troit1,IMSWFF1}.

Let $S$ be a finite set. A \emph{Coxeter matrix} over $S$ is a symmetric matrix $(m_{st})_{s,t\in S}$ with entries in $\{1,2,\dots\}\cup\{\infty\}$, such that $m_{ss}=1$ for all $s\in S$ and $m_{st}=m_{ts}\ge2$ if $s\ne t$. 
A Coxeter matrix can be encoded by the corresponding \emph{Coxeter graph} $\Gamma$ having $S$ as the set of vertices. Two distinct vertices $s,t\in S$ are connected with an edge in $\Gamma$ if $m_{st}\ge3$, and this edge is labeled with $m_{st}$ if $m_{st}\ge4$.
The \emph{Artin group of type $\Gamma$} is the group $A[\Gamma]$ given by the presentation: 
\[
A[\Gamma]=\langle S\mid \Pi(s,t,m_{st})=\Pi(t,s,m_{ts}), \text{ for all } s\ne t,\,m_{st}\ne\infty\rangle,
\]
where $\Pi(s,t,m_{st})$ is the word $stst\dots$ of length $m_{st}\ge2$. The \emph{Coxeter group $W[\Gamma]$ of type $\Gamma$} is the quotient of $A[\Gamma]$ by all relations of the form $s^2=1$, $s\in S$. We denote by $P[\Gamma]$ the kernel of the natural epimorphism $A[\Gamma]\to W[\Gamma]$. It is called the \emph{pure Artin group of type $\Gamma$}. An Artin group $A[\Gamma]$ is called \emph{spherical}, if $W[\Gamma]$ is finite.

Concerning Artin groups, property $R_\infty$ was established for braid groups (i.e.\ Artin groups of type~$A_n$) in \cite{FeGoDa1}, for pure braid groups $P[A_n]$ in~\cite{DeGoOc1}, for some classes of large-type Artin groups in~\cite{Juhas1} (see the discussion of these results in Remark~\ref{rem:juh} of Section~\ref{sec4}) and in~\cite{JoMaSa1}, for (non-abelian) right-angled Artin groups in~\cite{Witdo1} (see also \cite{DekSen1}), and for the spherical Artin groups of types $B_n$, $D_4$, $I_2(m)$ for $m\ne2$, their pure subgroups, and for the affine Artin groups of types $\tilde A_n$, $\tilde C_n$, in~\cite{CalSor1}.

In this article we extend the list of groups having property $R_\infty$ to include Artin groups of a few new classes. The first one is the class of Artin groups of spherical type $D_n$, for $n\ge6$, for which the Coxeter graph of type $D_n$ is depicted in Figure~\ref{fig:dn}. 
Namely, we prove the following theorem.

\begin{thm}\label{thm:1}
Let $n\ge6$. Then the Artin group $A[D_n]$ and its central quotient $A[D_n]/Z(A[D_n])$ have property $R_\infty$.
\end{thm}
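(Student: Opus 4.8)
The plan is to exploit the general strategy used in this area: to prove property $R_\infty$ for a group $G$, it suffices to find an action of $G$ on a Gromov-hyperbolic space $X$ together with enough structural control on $\Aut(G)$ so that every automorphism $\varphi$ permutes a "nice" infinite family of conjugacy classes with infinitely many $\varphi$-twisted orbits. Concretely, I would use the following well-known mechanism: if $G$ acts acylindrically (or with a WPD element, or more weakly so that some $g\in G$ acts loxodromically with its axis not fixed setwise) on a hyperbolic space $X$, and if $\Aut(G)$ preserves the collection of loxodromic conjugacy classes, then one can produce infinitely many $\varphi$-twisted conjugacy classes for each $\varphi$. So the first step is to identify the right hyperbolic space: for $A[D_n]$ with $n\ge 6$ one should use the recent description of $\Aut(A[D_n])$, which (as advertised in the abstract) is understood in terms of graph symmetries, inner automorphisms, and the inversion, together with an action on a suitable curve-complex-type or coset-intersection hyperbolic complex on which $A[D_n]$ acts acylindrically with unbounded orbits.

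Second, I would reduce the central quotient to the group itself. Since $Z(A[D_n])$ is infinite cyclic (generated by a power of the Garside element $\Delta$), and since any automorphism of $A[D_n]$ preserves the center, every $\varphi\in\Aut(A[D_n])$ descends to an automorphism $\bar\varphi$ of $A[D_n]/Z$, and conversely there is a controlled relationship between $\Aut(A[D_n])$ and $\Aut(A[D_n]/Z)$ coming from the central extension $1\to Z\to A[D_n]\to A[D_n]/Z\to 1$. The key point is that property $R_\infty$ behaves well under such central extensions when the relevant cohomological obstruction vanishes: more precisely, if $G$ has property $R_\infty$ "uniformly enough" (e.g.\ $R(\varphi)=\infty$ witnessed by loxodromic classes that survive in the quotient), then so does $G/Z$, since the twisted conjugacy classes in $G$ that are detected by the hyperbolic action map to distinct twisted conjugacy classes in $G/Z$. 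I would therefore prove the statement first for $A[D_n]$ and then transfer it to the quotient, checking that the loxodromic elements used remain loxodromic (equivalently, of infinite order and not central) in $A[D_n]/Z$ — which holds because loxodromics for an acylindrical action are never central.

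Third, the core argument: given $\varphi\in\Aut(A[D_n])$, combine the description of $\Aut(A[D_n])$ with the hyperbolic action to show $R(\varphi)=\infty$. Using the structural description, $\varphi$ differs from a "standard" automorphism (a graph automorphism, possibly composed with the global inversion) by an inner automorphism; inner automorphisms do not change Reidemeister numbers, so it suffices to handle $\varphi$ equal to such a standard automorphism. Then one picks a loxodromic element $g$ (e.g.\ a suitable product of generators whose image in the Coxeter group is a Coxeter element, giving a loxodromic on the hyperbolic complex) chosen so that $\varphi$ acts "compatibly" with the hyperbolic structure, and one shows that the elements $g, g^2, g^3, \dots$ (or $g^{k}$ along an appropriate coset) lie in infinitely many distinct $\varphi$-twisted conjugacy classes. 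The standard tool here is a translation-length / stable-length estimate: if two powers $g^{k}$ and $g^{\ell}$ were $\varphi$-twisted conjugate, then the translation lengths of $g^{k}$ and $\varphi(g^{\ell})=g^{\ell}$ (for a standard $\varphi$ fixing $g$, or scaling its axis in a controlled way) would have to be equal, which fails for $k\ne\ell$ up to finitely many coincidences. One must be slightly careful when $\varphi$ is the inversion or a nontrivial graph symmetry, as then $\varphi(g)$ need not equal $g$; in that case one instead chooses $g$ to be $\varphi$-invariant up to conjugacy (possible because the graph $D_n$ for $n\ge 6$ has an essentially rigid structure, its automorphism group being small), and runs the same length argument.

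The main obstacle I expect is the last step's compatibility requirement: one needs a loxodromic element whose conjugacy class (or axis) is preserved, in a quantitatively usable way, by an arbitrary standard automorphism — equivalently, one needs the graph-automorphism action on $D_n$ to be understood well enough to pin down a $\varphi$-periodic loxodromic, and then to control how $\varphi$ distorts translation lengths on the hyperbolic complex. For $D_n$ the outer automorphism group is finite and the graph has very limited symmetry (for $n\ge 5$ essentially only the $\mathbb{Z}/2$ swapping the two short branch legs), so this should be manageable, but verifying that the chosen element is genuinely loxodromic on the specific hyperbolic complex used in the cited $\Aut$-description — and that its twisted powers stay separated — is where the real work lies. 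A secondary subtlety is ensuring the argument is uniform enough to pass to the central quotient, i.e.\ that the separating invariant (translation length on $X$) is insensitive to multiplying by central elements; this is automatic since $Z$ acts trivially on $X$, but it must be stated.
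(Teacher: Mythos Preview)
Your general philosophy---use the recent description of $\Aut(A[D_n])$ together with an action on a Gromov-hyperbolic space---is the right one, and it is what the paper does. But there are two genuine gaps in your outline, and one place where you have the logic reversed.

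\textbf{The reduction direction is backwards.} You propose to prove $R_\infty$ for $A[D_n]$ first and then ``transfer it to the quotient''. The standard lemma (and the one the paper uses) goes the other way: if $G/H$ has $R_\infty$ for a characteristic subgroup $H$, then so does $G$. The paper therefore works with $\overline{A[D_n]}=A[D_n]/Z(A[D_n])$, which has \emph{trivial center}---a hypothesis actually needed for the $G_\varphi$-machinery (the embedding $G\hookrightarrow G_\varphi\hookrightarrow\Aut(G)$ and the identification of twisted classes in $G$ with ordinary conjugacy classes in the coset $Gt$). Going from $G$ to $G/Z$ does not follow from any general principle; your claim that ``loxodromic classes survive in the quotient'' is not a proof that $R(\bar\varphi)=\infty$, because distinct twisted classes in $G$ could collapse in $G/Z$.

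\textbf{The translation-length step is incorrect as stated.} You write that if $g^k$ and $g^\ell$ are $\varphi$-twisted conjugate then their translation lengths must agree. That is false: $\varphi$-twisted conjugacy means $g^k=x\,g^\ell\,\varphi(x)^{-1}$, which is not conjugacy and does not preserve translation length on $X$. The correct statement is that $g^k t$ and $g^\ell t$ are conjugate in the extension $G_\varphi$; to exploit translation length you then need $G_\varphi$ itself (equivalently $\Aut(G)$) to act by isometries on $X$. You never arrange this. The paper packages exactly this step via Delzant's Lemma: once $\Aut(G)$ acts isometrically on a hyperbolic space with $G\simeq\Inn(G)$ acting non-elementarily, every coset of $G$ in $G_\varphi$ contains infinitely many conjugacy classes, and $R_\infty$ follows.

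\textbf{The missing key ingredient.} Consequently the heart of the argument---which you leave as ``a suitable curve-complex-type or coset-intersection hyperbolic complex''---is precisely what requires work. The paper proves (as an independent proposition) that there is an embedding
\[
\overline{A[D_n]}\le \Aut(\overline{A[D_n]})\le \MM^*(\Sigma_n,\PP_n)
\]
into the extended mapping class group of an explicit punctured surface, by realising the graph automorphism $\zeta$ as conjugation by a hyperelliptic involution and the inversion $\chi$ as conjugation by an orientation-reversing reflection. This gives the required isometric action of $\Aut(\overline{A[D_n]})$ on the curve complex $\CC(\Sigma_n,\PP_n)$. Non-elementarity of the $\overline{A[D_n]}$-action is then established concretely via Penner's construction of pseudo-Anosovs and the Crisp--Paris solution to the Tits conjecture. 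None of this is available from the abstract sketch you give, and without it neither your translation-length argument nor Delzant's Lemma can be applied.
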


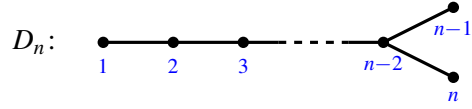
\begin{figure}
\begin{center}
\begin{tikzpicture}[scale=0.7, very thick]
\begin{scope}[scale=1.33] 
\fill (0,0) circle (2.3pt) node [below=2pt,blue] {\scriptsize$1$}; 
\fill (1,0) circle (2.3pt) node [below=2pt,blue] {\scriptsize$2$}; 
\fill (2,0) circle (2.3pt) node [below=2pt,blue] {\scriptsize$3$}; 
\fill (4,0) circle (2.3pt) node [below=1pt,blue] {\scriptsize$n{-}2$}; 
\fill (5,0.5) circle (2.3pt) node [below=1pt,blue] {\scriptsize$n{-}1$}; 
\fill (5,-0.5) circle (2.3pt) node [below=1pt,blue] {\scriptsize$n$}; 
\draw (5,0.5)--(4,0)--(5,-0.5);
\draw (0,0)--(1,0)--(2,0)--(2.5,0);
\draw [dashed] (2.5,0)--(3.5,0);
\draw (3.5,0)--(4,0);
\draw (-1,0) node {$D_n$:};	
\end{scope}
\end{tikzpicture}
\caption{The Coxeter graph of type $D_n$, $n\ge4$.\label{fig:dn}}
\end{center}
\end{figure}

The proof is based on the approach used in~\cite{FeGoDa1} and \cite{CalSor1} to establish property $R_\infty$ for some Artin groups whose automorphism groups can be embedded into mapping class groups of certain surfaces with punctures. 
The new ingredient that made this approach possible for Artin groups of type $D_n$ is the recent description of automorphisms of Artin groups of type $D_n$ for $n\ge6$, obtained in~\cite{CasPar1}. Using this result, we establish an embedding of $\Aut(A[D_n])$ into the extended mapping class group of a suitable punctured surface, in Proposition~\ref{prop:mcg}, which may be of independent interest. We use this embedding to show that $\Aut(A[D_n])$ acts in a non-elementary way on the Gromov-hyperbolic complex of curves for the surface, and then applying Delzant's Lemma (see Section~\ref{sec2}) allows us to detect infinitely many twisted conjugacy classes.

We note that property $R_\infty$ for the Artin group of type $D_4$ was established in~\cite{CalSor1}. 
The only remaining case in the family $A[D_n]$ is $A[D_5]$. At present we do not have a description of $\Aut(A[D_5])$, which is the key input in our approach; we conjecture though that both $A[D_5]$ and $A[D_5]/Z(A[D_5])$ have property $R_\infty$ as well.

The second class of Artin groups for which we establish property $R_\infty$ is the class of \emph{large-type hyperbolic-type free-of-infinity} Artin groups. An Artin group $A[\Gamma]$ is called \emph{large} (or \emph{large-type}) if all $m_{st}\ge3$ (with  $m_{st}=\infty$ possible) for all $s,t\in S$; $A[\Gamma]$ is called \emph{hyperbolic-type} if its Coxeter group $W[\Gamma]$ is word-hyperbolic, and $A[\Gamma]$ is called \emph{free-of-infinity}, if $m_{st}\ne\infty$ for all $s,t\in S$. We note that for the class of large-type Artin groups, being hyperbolic-type is equivalent to the absence of triangles with edge labels $(3,3,3)$ in its Coxeter graph $\Gamma$ (see~\cite[Corollary~12.6.3]{Davis1}). We also note that Artin groups of hyperbolic-type are not word-hyperbolic themselves, unless they are free. We prove the following theorem (see Corollary~\ref{cor:ltfoiht}).
 
\begin{thm}\label{thm:2}
Let $A[\Gamma]$ be a large hyperbolic-type free-of-infinity Artin group. Then it has property~$R_\infty$.
\end{thm}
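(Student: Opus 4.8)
The plan is to follow the strategy of \cite{FeGoDa1} and \cite{CalSor1}: exhibit a Gromov-hyperbolic space on which $A=A[\Gamma]$ acts with an abundance of loxodromic elements, check that this action is canonical enough to be compatible with every automorphism of $A$, and then run the twisted-conjugacy argument for groups acting on hyperbolic spaces to conclude that $R(\varphi)=\infty$ for every $\varphi\in\Aut(A)$. We may assume that $S$ has at least three elements: for $|S|=2$ the group is a dihedral Artin group $A[I_2(m)]$ with $m\ge3$, already covered by \cite{FeGoDa1} (for $m=3$) and \cite{CalSor1} (for $m\ge4$). Being large-type, $\Gamma$ is then the complete graph on $S$ with all labels in $\{3,4,\dots\}$, and the hyperbolic-type hypothesis means, concretely, that $\Gamma$ has no $(3,3,3)$-triangle (see \cite[Corollary~12.6.3]{Davis1}).

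Two ingredients are needed. The first is geometric: a large-type Artin group on at least three generators is two-dimensional (all labels being at least $3$, its only spherical parabolic subgroups are trivial, cyclic or dihedral), and, being moreover of hyperbolic type, it acts acylindrically and non-elementarily on a Gromov-hyperbolic space $X$ associated with its Deligne complex; here $X$ plays the role that the complex of curves plays in \cite{FeGoDa1,CalSor1}. In particular $A$ is not virtually cyclic, it has loxodromic elements for this action, and it has infinitely many conjugacy classes of loxodromics with stable translation lengths tending to infinity. The second ingredient is automorphism rigidity: since $\Gamma$ is complete with all labels at least $3$, has no $(3,3,3)$-triangle, and is free-of-infinity, the recent description of $\Aut(A)$ shows that $\Out(A)$ is finite --- generated by the symmetries of $\Gamma$ and the global inversion $s\mapsto s^{-1}$ --- and that every automorphism of $A$ carries parabolic subgroups to parabolic subgroups. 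Since $X$ is built canonically from the poset of cosets of standard parabolic subgroups, it follows that every $\varphi\in\Aut(A)$ is realized by a $\varphi$-equivariant quasi-isometry $F_\varphi$ of $X$, meaning $F_\varphi(a\cdot x)=\varphi(a)\cdot F_\varphi(x)$ for all $a\in A$ and $x\in X$. This is the counterpart, for large-type groups, of the role played by \cite{CasPar1} in the proof of Theorem~\ref{thm:1}.

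Granting these, fix $\varphi\in\Aut(A)$. By finiteness of $\Out(A)$ there are $N\ge1$ and $w\in A$ with $\varphi^N=\conj_w$, conjugation by $w$. A telescoping computation shows that the map $h\mapsto h\,\varphi(h)\cdots\varphi^{N-1}(h)\,w$ sends $\varphi$-twisted conjugacy classes of $A$ to ordinary conjugacy classes of $A$, so it suffices to produce infinitely many elements whose images lie in pairwise distinct conjugacy classes. Using that the action of $A$ on $X$ is acylindrical and non-elementary, that the $\varphi^i$ are automorphisms realized by quasi-isometries of $X$, and the description of $\Aut(A)$ above, one finds by a ping-pong argument (as in \cite{FeGoDa1,CalSor1}) a loxodromic element $g$ with $g,\varphi(g),\dots,\varphi^{N-1}(g)$ pairwise independent. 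Then the elements $g^k\,\varphi(g)^k\cdots\varphi^{N-1}(g)^k\,w$ are loxodromic with stable translation length tending to infinity with $k$, hence lie in infinitely many distinct conjugacy classes. Therefore $R(\varphi)=\infty$; as $\varphi$ was arbitrary, $A$ has property $R_\infty$. This is Theorem~\ref{thm:2}, obtained in Section~\ref{sec4} as Corollary~\ref{cor:ltfoiht}.

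The main obstacle is the second ingredient --- the passage from the visibly ``type-preserving'' automorphisms to \emph{all} of them. A priori an automorphism could scramble the vertex stabilizers of the Deligne complex in an uncontrolled way and so fail to respect the hyperbolic structure; it is precisely the three hypotheses that prevent this, ruling out the spherical parabolics of rank at least $3$, the affine $(3,3,3)$-parabolics, and (because $\Gamma$ is complete on at least three vertices) any leaf vertex or separating edge from which a transvection or a partial conjugation could be built, and thereby keeping $\Out(A)$ finite and forcing every automorphism to preserve the parabolic structure. Once this rigidity is in place, the reduction to comparing stable translation lengths proceeds exactly as in the braid-group, $B_n$, $D_4$ and $I_2(m)$ cases treated in \cite{FeGoDa1,CalSor1}, and the genericity needed to choose $g$ is routine for acylindrically hyperbolic groups.
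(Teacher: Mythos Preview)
Your proposal correctly identifies the two key ingredients the paper uses: hyperbolicity of the Deligne complex $D_\Gamma$ for large hyperbolic-type groups, and the rigidity $\Aut(A)=\Aut_\Gamma(A)$ for large free-of-infinity groups on at least three generators (Theorem~\ref{ThmAutForLTFOI}); you also dispose of the rank-$2$ case via \cite{CalSor1}, as the paper does.

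Where you diverge is the endgame. The paper does not use your telescoping map $h\mapsto h\,\varphi(h)\cdots\varphi^{N-1}(h)\,w$, a ping-pong search for a special loxodromic $g$, or translation-length estimates. Instead it shows (Definition~\ref{defi:actionofAut} and Lemma~\ref{lem:ActionByIsometries}) that $\Aut_\Gamma(A)$ acts on $D_\Gamma$ by genuine simplicial \emph{isometries} of the Moussong metric --- not merely $\varphi$-equivariant quasi-isometries --- verifies non-elementarity of the $A$-action by exhibiting two elliptics with disjoint fixed sets and invoking \cite{Marti1}, and then applies Corollary~\ref{cor:7} (the $G_\varphi$ extension together with Delzant's Lemma) to conclude $R_\infty$ in one stroke. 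Two caveats on your route: first, the $A$-action on $D_\Gamma$ itself is \emph{not} acylindrical (the type-$2$ vertices have infinite dihedral-Artin stabilizers), so if you want acylindricity you must pass to a coned-off space and re-verify that $\Aut(A)$ still acts there --- but acylindricity is nowhere needed, since Delzant only asks for a non-elementary isometric action; second, the existence of a loxodromic $g$ with $g,\varphi(g),\dots,\varphi^{N-1}(g)$ pairwise independent, and the growth of the stable translation length of $g^k\varphi(g)^k\cdots\varphi^{N-1}(g)^k\,w$, are asserted as ``routine'' but do require genuine argument (and your quasi-isometry realization of $\varphi$, rather than an isometry, complicates the bookkeeping). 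The paper's Delzant route buys freedom from all of this.
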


We also establish property $R_\infty$ for some other subclasses of large hyperbolic-type Artin groups, whose definitions are deferred to Section~\ref{sec4}, in the following theorem (whose proof is given in Corollaries~\ref{cor:xxxl} and \ref{cor:hierarchy}).

\begin{thm}\label{thm:3}
Let $A[\Gamma]$ be a large hyperbolic-type Artin group. If either
\begin{itemize}
\item $A[\Gamma]$ has XXXL type and $\Gamma$ is twistless, or
\item $\Gamma$ admits a
twistless hierarchy terminating in twistless stars,
\end{itemize}
then $A[\Gamma]$ has property~$R_\infty$.
\end{thm}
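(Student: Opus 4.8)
The plan is to treat all cases by a single mechanism, applicable whenever the automorphism group of $A[\Gamma]$ is suitably controlled, and then to invoke the relevant structural results to verify that control (this is what Corollaries~\ref{cor:xxxl} and~\ref{cor:hierarchy} do). The mechanism runs along the same lines as for Theorem~\ref{thm:2}. First recall the elementary fact that $R(\tau_g\circ\varphi)=R(\varphi)$ for every inner automorphism $\tau_g\colon x\mapsto gxg^{-1}$, via the bijection $h\mapsto hg$ between the two sets of twisted conjugacy classes; since $\varphi\circ\tau_g=\tau_{\varphi(g)}\circ\varphi$, this means $R(\varphi)$ depends only on the image of $\varphi$ in $\Out(A[\Gamma])$. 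Under the \emph{twistless} hypotheses, the recent descriptions of $\Aut(A[\Gamma])$ for large Artin groups recalled in Section~\ref{sec4} yield $\Aut(A[\Gamma])=\Inn(A[\Gamma])\cdot F$ for a finite subgroup $F$ consisting of ``geometric'' automorphisms — compositions of the graph automorphisms induced by symmetries of $\Gamma$ with the global inversion $s\mapsto s^{-1}$; in the hierarchy case this is obtained inductively, tracking $\Aut$ along the amalgamations prescribed by the twistless hierarchy down to the base case of twistless stars. Consequently every automorphism is inner-equivalent to some $\beta\in F$, and it suffices to prove that $R(\beta)=\infty$ for each such $\beta$.

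For the geometry, large hyperbolic-type Artin groups act acylindrically and non-elementarily on a Gromov-hyperbolic complex $X$ — one may take (a suitable variant of) the Artin complex, or the hyperbolic space witnessing their acylindrical hyperbolicity — and the finite group $F$ acts on $X$ by simplicial automorphisms compatibly with the $A[\Gamma]$-action, so that $\widehat G:=A[\Gamma]\rtimes F$ acts on $X$. As a finite extension of an acylindrically hyperbolic group, $\widehat G$ is itself acylindrically hyperbolic, with $A[\Gamma]$ a non-elementary normal subgroup. Writing $\widetilde\beta\in\widehat G$ for the canonical lift of $\beta$, the automorphism $\beta$ is the restriction of $\tau_{\widetilde\beta}$ to $A[\Gamma]$; hence if $h,h'\in A[\Gamma]$ are $\beta$-twisted conjugate, then $h\widetilde\beta$ and $h'\widetilde\beta$ are conjugate in $\widehat G$. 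It therefore suffices to exhibit infinitely many $\widehat G$-conjugacy classes inside the coset $A[\Gamma]\widetilde\beta$.

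To do so, note that since the $\widehat G$-action on $X$ is acylindrical, every element of $\widehat G$ is elliptic or loxodromic, and a standard argument — multiplying $\widetilde\beta$ by a large power of a loxodromic element of $A[\Gamma]$ when $\widetilde\beta$ is elliptic — produces a loxodromic element $\gamma\in A[\Gamma]\widetilde\beta$. Let $k$ be the order of $\beta$, so that $\gamma^k\in A[\Gamma]$. Then all the elements $\gamma^{km+1}=(\gamma^k)^m\gamma$, $m\ge0$, lie in the coset $A[\Gamma]\widetilde\beta$; being powers of the loxodromic $\gamma$, they have pairwise distinct stable translation lengths $(km+1)\,\tau_X(\gamma)>0$ on $X$, and since stable translation length is a conjugacy invariant of any isometric action, they are pairwise non-conjugate in $\widehat G$. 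Hence the elements $\gamma^{km+1}\widetilde\beta^{-1}\in A[\Gamma]$ represent pairwise distinct $\beta$-twisted conjugacy classes, so $R(\beta)=\infty$, and $A[\Gamma]$ has property $R_\infty$.

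The main obstacle is the structural input of the first paragraph: showing that $\Aut(A[\Gamma])$ is generated by inner automorphisms, graph automorphisms and the global inversion, and that the hyperbolic complex $X$ is preserved by the latter two. This is precisely what the conjunction of $\Gamma$ being \emph{twistless} with either the \emph{XXXL} condition or the existence of a \emph{twistless hierarchy terminating in twistless stars} is designed to provide: these hypotheses exclude Dehn-twist-type automorphisms and other exotic behaviour that would deform the hyperbolic structure. In the hierarchy case the difficulty is compounded by the inductive nature of the construction, since one must verify that the required equivariance, hyperbolicity and acylindricity are inherited through each amalgamation of the hierarchy, ultimately from the twistless stars.
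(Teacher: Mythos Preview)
Your overall strategy is sound and lands at the right conclusion, but it differs from the paper's argument in a couple of notable ways.

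The paper does not use translation lengths or acylindricity at all. Instead, it proves a single reduction (Theorem~\ref{Thm:SmallAutImpliesRinfty}): whenever $\Aut(A[\Gamma])=\Aut_\Gamma(A[\Gamma])$, one defines an explicit action of $\Aut_\Gamma(A[\Gamma])$ on the \emph{Deligne complex} $D_\Gamma$ with the Moussong metric (Definition~\ref{defi:actionofAut}), checks via Lemma~\ref{lem:ActionByIsometries} that every simplicial automorphism of $D_\Gamma$ is an isometry, and then applies Corollary~\ref{cor:7}, which packages Delzant's Lemma and the $G_\varphi$ construction of Lemmas~\ref{lem:embeds}--\ref{lem:cosets}. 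Non-elementarity of the $A[\Gamma]$-action on $D_\Gamma$ is obtained not by producing a loxodromic directly but by finding two elliptics with disjoint fixed sets and quoting Martin~\cite{Marti1}. The structural inputs $\Aut=\Aut_\Gamma$ for XXXL/twistless and for twistless hierarchies are then cited verbatim from~\cite{BlMaVa1,HuOsVa1}. Your argument replaces Delzant's Lemma by the more hands-on ``powers of a loxodromic in a coset have distinct stable translation lengths'' trick, and works with $\widehat G=A[\Gamma]\rtimes F$ rather than the per-automorphism extension $G_\varphi$; this is a legitimate and arguably more elementary route, but you then owe the reader a specific hyperbolic space on which $F$ visibly acts. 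The paper's choice of $D_\Gamma$ makes that step concrete.

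One point in your write-up is a genuine misconception: in the twistless-hierarchy case you suggest that hyperbolicity, acylindricity and $F$-equivariance must be ``inherited through each amalgamation of the hierarchy''. They do not. The hyperbolic space is the Deligne complex of the full group $A[\Gamma]$, whose Gromov-hyperbolicity follows directly from large + hyperbolic-type (Crisp~\cite{Crisp1}), with no induction; the twistless hierarchy is used only in~\cite{HuOsVa1} to pin down $\Aut(A[\Gamma])$, and plays no role in constructing or analysing the action on $D_\Gamma$. Once you correct that, and name $D_\Gamma$ (or another explicit $F$-space) in place of ``a suitable variant of the Artin complex'', your argument goes through.
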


The proofs of Theorems~\ref{thm:2} and~\ref{thm:3} are based on the recent description of the automorphism groups for these classes of Artin groups~\cite{Vasko2,BlMaVa1,HuOsVa1}, and on their action 
on the Deligne complex, which is known to be Gromov-hyperbolic.

This paper is organized as follows. In Section~\ref{sec2} we give preliminary results necessary for proving Theorem~\ref{thm:1} in Section~\ref{sec3} and Theorems~\ref{thm:2} and \ref{thm:3} in Section~\ref{sec4}. 
In Section~\ref{app} we provide, for the reader's convenience, a detailed proof of Delzant’s Lemma, which plays a key role in establishing the 
$R_\infty$ property in a number of papers and is stated in~\cite[Lemma~3.4]{LevLus1} with a brief proof sketch.
Finally, in Section~\ref{sec5} we recall some conjectures concerning property $R_\infty$, which we believe should be more broadly known. 

\begin{acknow}
The authors thank Evgenij Troitsky for his comments on the status of the conjectures discussed in Section~\ref{sec5}, and Matthieu Calvez, R\'emi Coulon, Mark Hagen, Gilbert Levitt, Alexandre Martin, and Peter Wong for useful discussions. The authors also thank Jennifer Taback for sharing the preprint~\cite{TabWhy1} with them.
This work was started at the AIM workshop ``Geometry and topology of Artin groups'' organized by Ruth Charney, Kasia Jankiewicz and Kevin Schreve in September 2023, and continued at
the BIRS workshop ``Combinatorial Nonpositive Curvature'' organized by Kasia Jankiewicz and Piotr Przytycki in September 2024.
The authors thank the organizers, the AIM and BIRS.
The first author acknowledges support from the AMS--Simons travel grant. The second author is supported by the Postdoc.Mobility fellowship \#P500PT\textunderscore 210985 of the Swiss National Science Foundation.
\end{acknow}

\section{Preliminaries on twisted conjugacy}\label{sec2}
Recall that for a group $G$ with center $Z(G)$, $\Inn(G)$ denotes the subgroup of $\Aut(G)$ consisting of all inner automorphisms. For $g\in G$, we denote by $\conj_g$ the inner automorphism of conjugation by $g$:  $x\mapsto gxg^{-1}$. The map $\conj\colon g\mapsto \conj_g$ identifies $\Inn(G)$ with $G/Z(G)$ and $\Inn(G)$ is a normal subgroup of $\Aut(G)$. The \emph{outer automorphism group} of $G$ is the quotient $\Out(G)=\Aut(G)/\Inn(G)$.

To determine property $R_\infty$ for a group $G$ one can proceed as follows.

\emph{Step 1: Reduce the problem to $G/Z(G)$:}

\begin{lem}[See e.g.~\protect{\cite[Section~2]{FeGoDa1}}]\label{lem:epi}
Let $G$ be a group, $\varphi$ be an automorphism of $G$, and $H$ be a normal $\varphi$-invariant subgroup of $G$. Denote by $\bar\varphi$ the automorphism induced by $\varphi$ on $G/H$. Then the condition $R(\bar\varphi)=\infty$ implies $R(\varphi)=\infty$. In particular, if $G/Z(G)$ has property $R_{\infty}$, then so does~$G$. \qed 
\end{lem}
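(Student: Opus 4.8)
The plan is to show that the quotient homomorphism $\pi\colon G\to G/H$ induces a surjection from the set of $\varphi$-twisted conjugacy classes of $G$ onto the set of $\bar\varphi$-twisted conjugacy classes of $G/H$. Granting this, the number of $\varphi$-twisted classes is at least the number of $\bar\varphi$-twisted classes, i.e.\ $R(\varphi)\ge R(\bar\varphi)$, and in particular $R(\bar\varphi)=\infty$ forces $R(\varphi)=\infty$.

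First I would record that $\bar\varphi$ is well defined: since $H$ is normal and $\varphi$-invariant, $\varphi$ descends to an endomorphism of $G/H$, and running the same argument for $\varphi^{-1}$ shows this descended map is an automorphism, with $\pi\circ\varphi=\bar\varphi\circ\pi$. The crucial observation is then immediate: if $g,h\in G$ satisfy $h=x\,g\,\varphi(x)^{-1}$ for some $x\in G$, applying $\pi$ and using $\pi\circ\varphi=\bar\varphi\circ\pi$ yields $\pi(h)=\pi(x)\,\pi(g)\,\bar\varphi(\pi(x))^{-1}$, so $\pi$ sends each $\varphi$-twisted conjugacy class into a single $\bar\varphi$-twisted conjugacy class. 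Hence the assignment descends to a well-defined map on equivalence classes, and it is surjective because $\pi$ is. This gives $R(\varphi)\ge R(\bar\varphi)$, proving the first assertion.

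For the final statement, apply the first part with $H=Z(G)$. The center is a characteristic subgroup of $G$, hence invariant under every $\varphi\in\Aut(G)$, and it is normal, so each $\varphi$ induces an automorphism $\bar\varphi$ of $G/Z(G)$. If $G/Z(G)$ has property $R_\infty$, then $R(\psi)=\infty$ for \emph{every} automorphism $\psi$ of $G/Z(G)$, in particular for $\psi=\bar\varphi$; by the first part $R(\varphi)=\infty$. As $\varphi$ was arbitrary, $G$ has property $R_\infty$.

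No step here presents a genuine obstacle; the only point worth a moment's care is the compatibility $\pi\circ\varphi=\bar\varphi\circ\pi$, which underlies the well-definedness of the induced map on twisted conjugacy classes, and the (minor) remark that one need not lift automorphisms of $G/Z(G)$ back to $G$ — property $R_\infty$ of $G/Z(G)$ is a statement about all of its automorphisms and so in particular about those arising from $\Aut(G)$.
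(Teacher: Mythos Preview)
Your argument is correct and is precisely the standard one: the quotient map carries $\varphi$-twisted classes onto $\bar\varphi$-twisted classes, giving $R(\varphi)\ge R(\bar\varphi)$, and the center is characteristic so the ``in particular'' follows. The paper itself does not supply a proof (it simply cites \cite[Section~2]{FeGoDa1} and marks the lemma with \qed), so there is nothing further to compare.
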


\emph{Step 2: Reduce the problem to detecting the usual (non-twisted) conjugacy classes:}
 
Let $G$ be a group with $Z(G)=1$ and let $\varphi$ be an automorphism of $G$. Let $m\in\{1,2,3,\dots\}\cup\{\infty\}$ 
be the order of $\varphi$ in $\Out(G)$; if $m<\infty$, let $p\in G$ be the unique element such that $\varphi^m=\conj_p$. 
Consider the group
\[
G_\varphi=
\begin{cases} 
G*\langle t\rangle/\langle tgt^{-1}=\varphi(g) \text{ for all }g\in G\,\rangle,  & \text{if $m=\infty$,}\\
G*\langle t\rangle/\langle tgt^{-1}=\varphi(g) \text{ for all }g\in G,\,t^m=p\,\rangle,  & \text{if $m<\infty$.} 
\end{cases}
\]
We have the following useful properties of $G_\varphi$:
\begin{lem}[\protect{Calvez--Soroko~\cite[Lemma~3]{CalSor1}}]\label{lem:embeds}
Let $G$ be a group with $Z(G)=1$, $\varphi\in\Aut(G)$, and $G_\varphi$ defined as above. Then $G$ is a normal subgroup of $G_\varphi$, the quotient $G_{\varphi}/G$ is cyclic, and the assignment $g\mapsto \conj_g$ and $t\mapsto \varphi$ defines an injective homomorphism from $G_{\varphi}$ to $\Aut(G)$. \qed
\end{lem}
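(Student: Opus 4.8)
The plan is to construct $G_\varphi$ explicitly and verify the three assertions---normality of $G$, cyclicity of the quotient, and injectivity of the map to $\Aut(G)$---essentially by unwinding the presentation and using the relation $tgt^{-1}=\varphi(g)$. First I would observe that the defining relations $tgt^{-1}=\varphi(g)$ say precisely that conjugation by $t$ induces the automorphism $\varphi$ on the image of $G$; in particular every element of $G_\varphi$ can be written as $g\,t^k$ for some $g$ in the image of $G$ and some $k\in\Z$ (in the case $m<\infty$, with $k$ taken modulo $m$, using $t^m=p\in G$). This normal form immediately gives that $G$ is normal in $G_\varphi$ and that $G_\varphi/G$ is cyclic: it is generated by the image of $t$, infinite cyclic when $m=\infty$ and cyclic of order dividing $m$ when $m<\infty$. (One should be slightly careful here: a priori the natural map $G\to G_\varphi$ could fail to be injective, but this will come out of the injectivity statement below, so I would phrase ``$G$ is a normal subgroup'' as a consequence of the whole package rather than prove it in isolation.)

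Next I would define the candidate homomorphism $\Psi\colon G_\varphi\to\Aut(G)$ by the universal property of the presentation: send each $g\in G$ to $\conj_g$ and $t$ to $\varphi$. To see this is well-defined I must check the relations are satisfied in $\Aut(G)$. The relations of $G$ itself are fine because $\conj$ is a homomorphism. The relation $tgt^{-1}=\varphi(g)$ becomes $\varphi\,\conj_g\,\varphi^{-1}=\conj_{\varphi(g)}$, which is the standard identity $\varphi\circ\conj_g\circ\varphi^{-1}=\conj_{\varphi(g)}$ valid for any automorphism. In the case $m<\infty$, the extra relation $t^m=p$ becomes $\varphi^m=\conj_p$, which holds by the very definition of $p$. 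So $\Psi$ is a well-defined homomorphism.

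The main obstacle is injectivity of $\Psi$. Using the normal form $g\,t^k$, an element of $\Ker\Psi$ satisfies $\conj_g\circ\varphi^k=\id_G$ in $\Aut(G)$, i.e.\ $\varphi^k=\conj_{g^{-1}}$ is inner. If $m=\infty$ this forces $k=0$ (since $m$ is the order of $\varphi$ in $\Out(G)$), and then $\conj_g=\id$, so $g\in Z(G)=1$, giving triviality. If $m<\infty$ then $m\mid k$, write $k=mq$; using $t^m=p$ we get the element equals $g\,p^{q}$ in $G$, and $\Psi$ of it is $\conj_{g p^{q}}=\id$, so again $g p^q\in Z(G)=1$ and the element is trivial. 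The one genuinely delicate point I would need to nail down is that the normal form is honest---that the map $G\to G_\varphi$ is injective and that the powers of $t$ are ``independent'' of $G$ in the stated ranges---so that ``$\conj_g\circ\varphi^k=\id$ in $\Aut(G)$'' can be pulled back to ``the element is trivial in $G_\varphi$''. This is where the hypothesis $Z(G)=1$ does the real work; I would either cite the HNN/amalgam structure of $G_\varphi$ (it is an ascending HNN extension of $G$ when $m=\infty$, and a cyclic extension when $m<\infty$) to get the normal form for free, or argue directly that $\Psi$ restricted to $G$ is injective (it is, since $Z(G)=1$ makes $\conj$ injective) and combine this with the surjection $G_\varphi\to\Z$ or $\Z/m\Z$ recording the $t$-exponent. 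Since this is the content of \cite[Lemma~3]{CalSor1}, I would present the above as the argument and refer there for the routine verification of the normal form.
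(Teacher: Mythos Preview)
The paper does not give a proof of this lemma at all: it is quoted from \cite[Lemma~3]{CalSor1} and marked with a terminal \qedsymbol, so there is nothing in the paper to compare your argument against. Your proposal is correct and is the standard argument one would expect in the cited reference: define $\Psi$ via the universal property of the presentation, check that the defining relations of $G_\varphi$ map to identities in $\Aut(G)$ (namely $\varphi\circ\conj_g\circ\varphi^{-1}=\conj_{\varphi(g)}$ and, when $m<\infty$, $\varphi^m=\conj_p$), and then use $Z(G)=1$ together with the definition of $m$ as the order of $[\varphi]$ in $\Out(G)$ to show $\Ker\Psi$ is trivial.

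One minor streamlining: you flag a possible circularity between the normal form $g\,t^k$ and the injectivity of $G\to G_\varphi$, but you can sidestep this entirely. The composite $G\to G_\varphi\xrightarrow{\ \Psi\ }\Aut(G)$ is exactly $g\mapsto\conj_g$, which is injective because $Z(G)=1$; hence $G\to G_\varphi$ is injective outright. After that, the normal form is purely a surjectivity statement (every element of $G_\varphi$ is $g\,t^k$ for some $g\in G$ and some exponent $k$), which follows immediately from the relations and needs no further justification.
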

We observe that if $m=\infty$, $G_\varphi$ is the semidirect product $G\rtimes_\varphi\Z$, and if $m<\infty$, we have a 
commutative diagram:
\[
\setlength\mathsurround{0pt}
	\begin{tikzcd}
		1\arrow[r] & G\arrow[r]\arrow[d,"\simeq"] & G_\varphi\arrow[r]\arrow[d,hook] & \Z/m\Z \arrow[r]\arrow[d,hook] & 1\\
		1\arrow[r] & \Inn(G)\arrow[r] & \Aut(G)\arrow[r] & \Out(G)\arrow[r] & 1 
	\end{tikzcd}
\]
in which the top short exact sequence does not split in general (for example, if $G$ is the free group of rank $2$, there are elements of order~$6$ in $\Out(G)$, but not in $\Aut(G)$, see~\cite[Proposition~4.6]{LynSch1}). 
We also note that if $m=1$ then $G_\varphi$ is isomorphic to $G$. 

It turns out that $\varphi$-twisted conjugacy classes in $G$ bijectively correspond to the usual conjugacy classes in the coset $Gt$ of $G_\varphi$:

\begin{lem}[\protect{Calvez--Soroko~\cite[Proposition~4]{CalSor1}}]\label{lem:cosets}
Let $G$ be a group with $Z(G)=1$, $\varphi\in\Aut(G)$, and $G_\varphi$ defined as above. Two elements $g,h\in G$ are $\varphi$-twisted conjugate in $G$ if and only if the elements $gt$ and~$ht$ of $G_{\varphi}$ are conjugate in $G_{\varphi}$. In particular, 
$R(\varphi)=\infty$ if and only if the coset $Gt$ in $G_{\varphi}$ contains infinitely many conjugacy classes.\qed
\end{lem}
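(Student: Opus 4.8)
The plan is to work inside $G_\varphi$ and reduce everything to one normal-form computation. Since the defining relations give $t^k g t^{-k}=\varphi^k(g)$ for all $g\in G$, $k\in\Z$, every element of $G_\varphi$ can be written as $x t^k$ with $x\in G$ (in the case $m<\infty$ one also has $t^m=p\in G$, which causes no trouble). I would then compute directly that
\[
(x t^k)\,(gt)\,(x t^k)^{-1}=x\,\varphi^k(g)\,\varphi(x)^{-1}\,t
\]
for all $x\in G$ and $k\in\Z$. This single identity does most of the work: it shows that the conjugacy class of $gt$ in $G_\varphi$ is contained in the coset $Gt$, and that $gt$ is conjugate to $ht$ in $G_\varphi$ precisely when $h=x\,\varphi^k(g)\,\varphi(x)^{-1}$ for some $x\in G$ and some $k\in\Z$.

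With the identity in hand, the ``if'' direction is immediate: if $h=x g\varphi(x)^{-1}$ with $x\in G$, take $k=0$ to see that conjugating $gt$ by $x\in G\le G_\varphi$ yields $ht$. For the converse, if $gt$ and $ht$ are conjugate in $G_\varphi$ then $h=x\,\varphi^k(g)\,\varphi(x)^{-1}$ for some $x$ and $k$, which says exactly that $h$ is $\varphi$-twisted conjugate to $\varphi^k(g)$. So it remains to check that $\varphi^k(g)$ is $\varphi$-twisted conjugate to $g$ for every $k\in\Z$. Taking $y=g^{-1}$ gives $y g\varphi(y)^{-1}=\varphi(g^{-1})^{-1}=\varphi(g)$, so $\varphi(g)$ is $\varphi$-twisted conjugate to $g$; applying this with $\varphi^{-1}(g)$ in place of $g$ gives the relation for $\varphi^{-1}(g)$; and iterating, together with the fact that $\varphi$-twisted conjugacy is an equivalence relation, yields that $\varphi^k(g)$ is $\varphi$-twisted conjugate to $g$ for all $k$. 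Transitivity then gives that $h$ is $\varphi$-twisted conjugate to $g$, completing this direction.

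For the final assertion, the identity above shows that $Gt$ is a union of conjugacy classes of $G_\varphi$, so the set bijection $G\to Gt$, $g\mapsto gt$, descends to a bijection between the $\varphi$-twisted conjugacy classes of $G$ and the conjugacy classes of $G_\varphi$ lying in $Gt$; hence $R(\varphi)=\infty$ exactly when $Gt$ meets infinitely many conjugacy classes. The computation is routine, and the step that requires genuine care is the converse direction: the raw twisted-conjugacy relation only delivers that $h$ is $\varphi$-twisted conjugate to $\varphi^k(g)$, so one needs the auxiliary observation that $\varphi^k(g)$ is $\varphi$-twisted conjugate to $g$ together with transitivity to finish. Note also that, beyond making $G_\varphi$ well-defined, the argument uses only the relation $tgt^{-1}=\varphi(g)$ and the inclusion $G\le G_\varphi$, so the hypothesis $Z(G)=1$ plays no further role here.
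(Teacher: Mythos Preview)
Your argument is correct. The key identity $(xt^k)(gt)(xt^k)^{-1}=x\,\varphi^k(g)\,\varphi(x)^{-1}\,t$ is right, and your handling of the converse direction---reducing to the fact that $\varphi^k(g)$ is $\varphi$-twisted conjugate to $g$ and then invoking transitivity---is the standard and correct way to close the gap.

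Note, however, that the paper does not supply its own proof of this lemma: it is quoted from \cite[Proposition~4]{CalSor1} and marked with a terminal \qed, so there is nothing in the present paper to compare your argument against. Your write-up is a perfectly good self-contained proof of the cited result.
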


\emph{Step 3: Detect infinitely many conjugacy classes using Delzant's Lemma:}

In case when a group $\Gamma$ acts by isometries in a non-elementary way on a Gromov-hyperbolic space, the very useful lemma of Delzant allows us to detect infinitely many conjugacy classes under certain conditions, which will be satisfied in our case. We recall the relevant definitions.

Let $\Gamma$ be a group acting by isometries on a geodesic Gromov-hyperbolic space $X$ (not necessarily proper). For any $x\in X$ consider the set~$\Lambda(\Gamma)$ of accumulation points of the orbit $\Gamma x$ on the boundary $\partial X$. The action of $\Gamma$ on $X$ is called \emph{non-elementary}, if $\Lambda(\Gamma)$ contains at least three points; see~\cite[Section~3]{Osin1}.

The following result is our key tool for detecting infinitely many twisted conjugacy classes. We provide an detailed proof of it in Section~\ref{app}.

\begin{delzantlemB}[\protect{\cite[Lemma 6.3]{FeGoDa1}}, \protect{\cite[Lemma 3.4]{LevLus1}}]\label{lem:delzant}
Let $\Gamma$ be a group acting non-elementarily by isometries on a geodesic Gromov-hyperbolic space (which is not assumed to be proper), and let $K$ be a normal subgroup of $\Gamma$ such that the quotient $\Gamma/K$ is abelian. Then every coset of $K$ contains infinitely many conjugacy classes. 
\end{delzantlemB}

\begin{corl}\label{cor:7}
Let $G$ be a group with $Z(G)=1$. If $\Aut(G)$ acts by isometries on a Gromov-hyperbolic space $X$ in such a way that the action of the subgroup $G\simeq\Inn(G)\le\Aut(G)$ is non-elementary, then $G$ has property $R_\infty$.
\end{corl}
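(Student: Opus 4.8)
The plan is to deduce property $R_\infty$ for $G$ directly from the three-step machinery recalled above, with Steps~2 and~3 doing all the work (Step~1 being vacuous since $Z(G)=1$). Fixing an arbitrary $\varphi\in\Aut(G)$, I would form the group $G_\varphi$ as in Step~2 and invoke Lemma~\ref{lem:embeds}: the assignment $g\mapsto\conj_g$, $t\mapsto\varphi$ defines an injective homomorphism $G_\varphi\hookrightarrow\Aut(G)$ whose restriction to the normal subgroup $G\le G_\varphi$ is the canonical isomorphism $G\simeq\Inn(G)$. Composing this embedding with the hypothesized isometric action of $\Aut(G)$ on the Gromov-hyperbolic space $X$ makes $G_\varphi$ act by isometries on $X$, in a way in which the subgroup $G$ acts precisely as $\Inn(G)$ does, hence non-elementarily by assumption.

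Next I would observe that the action of the overgroup $G_\varphi$ on $X$ is then also non-elementary: since $G\le G_\varphi$, every orbit $Gx$ is contained in the orbit $G_\varphi x$, so $\Lambda(G)\subseteq\Lambda(G_\varphi)$, and the latter limit set therefore contains at least three points. With this in hand I would apply Delzant's Lemma to the non-elementary action of $\Gamma=G_\varphi$ on $X$, taking $K=G$: by Lemma~\ref{lem:embeds} the quotient $G_\varphi/G$ is cyclic, hence abelian, so the hypotheses are satisfied, and we conclude that every coset of $G$ in $G_\varphi$ — in particular the coset $Gt$ — contains infinitely many conjugacy classes. Finally, Lemma~\ref{lem:cosets} translates the statement ``$Gt$ meets infinitely many conjugacy classes of $G_\varphi$'' into ``$R(\varphi)=\infty$''. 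As $\varphi\in\Aut(G)$ was arbitrary, this establishes property $R_\infty$ for $G$.

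I do not expect any genuinely hard step: the corollary is essentially the assembly of Lemmas~\ref{lem:embeds} and~\ref{lem:cosets} with Delzant's Lemma, and the only point to verify by hand is the inheritance of non-elementarity from $\Inn(G)$ to $G_\varphi$, which is immediate from the orbit inclusion (enlarging the acting group can only enlarge the limit set). One should also keep in mind the degenerate case $m=1$, where $\varphi$ is inner, $G_\varphi\simeq G$ and $Gt=G$; there the same argument applies with $\Gamma/K$ trivial, recovering the classical fact that a group acting non-elementarily on a Gromov-hyperbolic space has infinitely many conjugacy classes.
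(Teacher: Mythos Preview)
Your proposal is correct and follows essentially the same route as the paper's proof: embed $G_\varphi$ into $\Aut(G)$ via Lemma~\ref{lem:embeds}, observe that the induced action of $G_\varphi$ on $X$ is non-elementary (since the subgroup $G\simeq\Inn(G)$ already acts non-elementarily), apply Delzant's Lemma with $\Gamma=G_\varphi$ and $K=G$, and finish with Lemma~\ref{lem:cosets}. Your write-up is in fact slightly more explicit than the paper's, spelling out the orbit-inclusion argument for inheritance of non-elementarity and the cyclicity of $G_\varphi/G$.
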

\begin{proof}
By Lemma~\ref{lem:embeds}, for any $\varphi\in\Aut(G)$ we have embeddings $G\le G_\varphi\le \Aut(G)$, which imply that $G_\varphi$ also acts on $X$ non-elementarily. We apply Delzant's Lemma to $\Gamma=G_\varphi$, $K=G$, and conclude that the coset $Gt$ of $G_\varphi$ contains infinitely many conjugacy classes. Lemma~\ref{lem:cosets} now implies that $G$ has property $R_\infty$.
\end{proof}

\begin{rem}
Instead of $G_\varphi$, some authors work with the (unrestricted) mapping torus $G\rtimes_\varphi\Z$ in order to translate
$\varphi$-twisted conjugacy in $G$ into ordinary conjugacy in $G\rtimes_\varphi\Z$
(see e.g.~\cite{IMSWFF1,Juhas1}).
Indeed, for $G\rtimes_\varphi\Z$ the same criterion holds as for $G_\varphi$: arbitrary elements $g,h\in G$ are $\varphi$-twisted conjugate in $G$ if and only if $gt$ and $ht$
are conjugate in $G\rtimes_\varphi\Z$. Following~\cite{FeGoDa1} and ~\cite{CalSor1}, we are working with $G_\varphi$, since it comes with a canonical embedding into $\Aut(G)$, which allows us to make $G_\varphi$ act on the same Gromov-hyperbolic space on which $\Aut(G)$ acts. By contrast, when $\varphi$ has finite order $m$ in $\Out(G)$, so that $\varphi^m=\conj_p$ for some $p\in G$,
the natural homomorphism $G\rtimes_\varphi\Z\to\Aut(G)$ given by $g\mapsto \conj_g$ and $t\mapsto\varphi$
is not injective (its kernel contains $t^m p^{-1}$). 
\end{rem}

\section{A geometric representation of \texorpdfstring{$\Aut(A[D_n])$}{Aut(A[Dn])}}\label{sec3}

In this section, we describe an embedding of $\Aut(A[D_n])$ into the (extended) mapping class group of a suitable surface with marked points, and use it to prove Theorem~\ref{thm:1}. 

First, we introduce some notation. We denote by $t_1,\dots,t_n$ the standard generators of $A=A[D_n]$ corresponding to the numbering of vertices of the Coxeter graph in Figure~\ref{fig:dn}. The element
\[
\Delta = (t_1\dots t_{n-2}t_{n-1}t_nt_{n-2}\dots t_1)(t_2\dots t_{n-2}t_{n-1}t_nt_{n-2}\dots t_2)\dots (t_{n-2}t_{n-1}t_nt_{n-2})(t_{n-1}t_n)
\]
is the so-called Garside element of $A$ (see~\cite[Lemma~5.1]{Paris1}). If $n$ is even, then $\Delta t_i\Delta^{-1}=t_i$ for all $1\le i\le n$, and the center $Z(A)$ is generated by $\Delta$. If $n$ is odd, then $\Delta t_i\Delta^{-1}=t_i$ for all $1\le i\le n-2$, $\Delta t_{n-1}\Delta^{-1}=t_n$, $\Delta t_n\Delta^{-1}=t_{n-1}$, and $Z(A)$ is generated by $\Delta^2$, see~\cite[Satz~7.2]{BriSai1}.

We define automorphisms $\zeta,\chi\in\Aut(A[D_n])$ as:
\begin{gather*}
\zeta(t_i)=t_i\text{\quad for\,\,\,}1\le i\le n-2,\quad \zeta(t_{n-1})=t_n,\quad\zeta(t_n)=t_{n-1},\\
\chi(t_i)=t_i^{-1}\text{\quad for\,\,\,}1\le i\le n.
\end{gather*}
Clearly, $\zeta$ and $\chi$ commute and both have order $2$, and hence they generate a subgroup of $\Aut(A[D_n])$ isomorphic to $\Z/2\Z\times\Z/2\Z$. If $n$ is odd, then $\zeta$ is the conjugation automorphism by $\Delta$. If $n$ is even, then $\zeta$ is not inner, as can be seen from the proof of Satz~7.1 of~\cite{BriSai1}. On the other hand, the automorphism $\chi$ is never inner. (Indeed, consider the homomorphism $\xi\colon A[D_n]\to\Z$, $t_i\mapsto1$ for all $1\le i\le n$. For every inner automorphism $\phi$, we have $\xi(\phi(t_i))=\xi(t_i)$ for all $i$. For the automorphism $\chi$ we have: $\xi(\chi(t_i))=-1=-\xi(t_i)$ for all $i$.)

Denote for brevity $\ov{A[D_n]}=A[D_n]/Z(A[D_n])$ and let $\pi\colon A[D_n]\to\ov{A[D_n]}$ be the canonical projection. It is well known (and easy to prove) that $Z(\ov{A[D_n]})=1$. For each $1\le i\le n$, we set $\bar t_i=\pi(t_i)$ and denote $\bar\zeta$, $\bar\chi$ the automorphisms of $\ov{A[D_n]}$ induced by $\zeta$, $\chi$, respectively. In~\cite{CasPar1}, Castel and Paris obtained the following description of $\Aut(A[D_n])$ and $\Aut(\ov{A[D_n]})$.
\begin{thm}[\protect{Castel--Paris~\cite[Corollary~2.10]{CasPar1}}]\label{thm:autdn}
Let $n\ge6$. 
\begin{enumerate}
\item If $n$ is even, then
\[
\Aut(\ov{A[D_n]})=\Inn(\ov{A[D_n]})\rtimes\langle\bar\zeta,\bar\chi\rangle\simeq\ov{A[D_n]}\rtimes(\Z/2\Z\times\Z/2\Z),
\]
and $\Out(\ov{A[D_n]})\simeq \Z/2\Z\times\Z/2\Z$.
\item If $n$ is odd, then
\[
\Aut(\ov{A[D_n]})=\Inn(\ov{A[D_n]})\rtimes\langle\bar\chi\rangle\simeq\ov{A[D_n]}\rtimes\Z/2\Z,
\]
and $\Out(\ov{A[D_n]})\simeq \Z/2\Z$.
\end{enumerate}
Moreover, for any $n\ge6$ we have $\Aut(A[D_n])\simeq\Aut(\ov{A[D_n]})$ and $\Out(A[D_n])\simeq\Out(\ov{A[D_n]})$.\qed
\end{thm}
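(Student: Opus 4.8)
The plan is to work first with the centerless quotient $\bar A:=\ov{A[D_n]}$, where $Z(\bar A)=1$ and hence $\Inn(\bar A)\cong\bar A$, determine $\Aut(\bar A)$ and $\Out(\bar A)$ there, and only afterwards transfer the result to $A:=A[D_n]$. Two features of $D_n$ are used repeatedly: every edge of the Coxeter graph carries the label $3$, so the only defining relations of $\bar A$ are braid relations $\bar t_i\bar t_j\bar t_i=\bar t_j\bar t_i\bar t_j$ over edges and commutations over non-edges; and $Z(A)$ is infinite cyclic, generated by $\Delta$ when $n$ is even and by $\Delta^2$ when $n$ is odd, with $\xi(\Delta)=n(n-1)\ge30$ where $\xi\colon A\to\Z$ sends every $t_i$ to $1$.

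The crux is a \emph{rigidity} statement for the standard generating set: for every $\varphi\in\Aut(\bar A)$ there are an inner automorphism $\conj_h$ of $\bar A$, a permutation $\pi$ of $\{1,\dots,n\}$, and signs $\varepsilon_i\in\{\pm1\}$ with $(\conj_h\circ\varphi)(\bar t_i)=\bar t_{\pi(i)}^{\,\varepsilon_i}$ for all $i$. Proving this is where the structural theory of spherical Artin groups is needed: one must identify the conjugacy classes of the cyclic subgroups $\langle\bar t_i\rangle$ intrinsically (as generators of the minimal nontrivial standard parabolic subgroups) and bound the remaining ambiguity using the known description of normalisers of parabolic subgroups; this is precisely the analysis carried out by Castel and Paris, and it is the part that is not available for $n=5$. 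Granting it, the rest is routine: abelianising the braid relations and using that $D_n$ is connected forces all the $\varepsilon_i$ to coincide, so after possibly composing with $\bar\chi$ we may assume $\psi:=\conj_h\circ\varphi$ satisfies $\psi(\bar t_i)=\bar t_{\pi(i)}$; since $\psi$ is an automorphism it carries the relation $\bar t_i\bar t_j\bar t_i=\bar t_j\bar t_i\bar t_j$ to the corresponding relation for $\bar t_{\pi(i)},\bar t_{\pi(j)}$ and back, and a braid relation between two standard generators holds in $\bar A$ exactly when they are joined by an edge (two non-adjacent generators commute and span a copy of $\Z^2$, which admits no braid relation). Hence $\pi$ is an automorphism of the graph $D_n$; for $n\ge5$ this group is $\Z/2\Z$, generated by the transposition of the fork vertices $n-1$ and $n$, and that transposition is realised by $\zeta$. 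Therefore $\varphi$ agrees modulo $\Inn(\bar A)$ with $\bar\zeta^{\,a}\bar\chi^{\,b}$, $a,b\in\{0,1\}$; and when $n$ is odd, $\zeta=\conj_\Delta$ is inner, so $\bar\zeta$ is trivial in $\Out(\bar A)$.

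Next I check that the remaining candidates are inequivalent modulo $\Inn(\bar A)$, i.e.\ $\langle\bar\zeta,\bar\chi\rangle\cap\Inn(\bar A)=1$. Since $\xi\circ\phi=\xi$ for every inner $\phi$, while $\xi\circ\chi=-\xi=\xi\circ(\zeta\chi)$, neither $\bar\chi$ nor $\bar\zeta\bar\chi$ is inner; for $n$ even one verifies $\bar\zeta\notin\Inn(\bar A)$ by the finer observation that no element of $A$ conjugates $t_{n-1}$ to $t_n$ while centralising $t_1,\dots,t_{n-2}$, which can be extracted from the proof of Satz~7.1 in~\cite{BriSai1} and passes to the quotient. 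As $\zeta$ and $\chi$ are given by explicit commuting involutions, $\langle\bar\zeta,\bar\chi\rangle$ is then a complement of $\Inn(\bar A)$ in $\Aut(\bar A)$, yielding $\Aut(\bar A)=\Inn(\bar A)\rtimes\langle\bar\zeta,\bar\chi\rangle\simeq\bar A\rtimes(\Z/2\Z)^2$ and $\Out(\bar A)\simeq(\Z/2\Z)^2$ for $n$ even, and $\Aut(\bar A)=\Inn(\bar A)\rtimes\langle\bar\chi\rangle\simeq\bar A\rtimes\Z/2\Z$, $\Out(\bar A)\simeq\Z/2\Z$ for $n$ odd.

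Finally I transfer everything to $A$. Since $Z(A)$ is characteristic, restriction defines a homomorphism $\rho\colon\Aut(A)\to\Aut(\bar A)$; it is onto because inner automorphisms of $\bar A$ lift to inner automorphisms of $A$ and $\bar\zeta,\bar\chi$ lift to $\zeta,\chi$. For injectivity, if $\rho(\varphi)=\id$ then $\varphi(t_i)=t_iz^{k_i}$ with $z$ a generator of $Z(A)$; substituting into each braid relation and using that $z$ is central gives $k_i=k_j$ over every edge, hence a common value $k$ since $D_n$ is connected, and then $1+k\,\xi(z)=\pm1$ because $\xi\circ\varphi$ must be surjective, so $k=0$ as $\xi(z)\in\{n(n-1),2n(n-1)\}$ is far from $0$ and $-2$. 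Thus $\rho$ is an isomorphism, and since it carries $\Inn(A)$ onto $\Inn(\bar A)$ it descends to $\Out(A)\simeq\Out(\bar A)$. The single genuine obstacle in this scheme is the rigidity statement of the second paragraph — everything downstream is manipulation of relations, abelianisations and centralisers — and it is exactly that statement which forces the hypothesis $n\ge6$ and which requires the parabolic-subgroup machinery for spherical Artin groups.
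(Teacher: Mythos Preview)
The paper does not give its own proof of this theorem: it is quoted from Castel--Paris \cite[Corollary~2.10]{CasPar1} and closed with a \qed. Your proposal is therefore not competing with an argument in the paper but is an outline of the cited result itself. As such it is sound: you correctly isolate the single substantive step---rigidity of the standard generating set up to an inner automorphism, a permutation, and signs---as the one requiring the Castel--Paris analysis (and the hypothesis $n\ge6$), and your downstream deductions (equal signs via the abelianisation of the braid relations, $\pi\in\Aut(D_n)\cong\Z/2\Z$ for $n\ge5$, non-innerness of $\bar\chi$ and, for $n$ even, of $\bar\zeta$, and the transfer $\Aut(A)\cong\Aut(\bar A)$ via the characteristic center) are all correct.

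Two small comments. First, when you argue that a braid relation between non-adjacent $\bar t_a,\bar t_b$ fails in $\bar A$ ``because they span a copy of $\Z^2$'', it is cleaner---and avoids tacitly invoking $Z(A)\cap\langle t_a,t_b\rangle=\{1\}$---to observe that the putative braid relation together with commutativity would force $\bar t_a=\bar t_b$, and then $t_a t_b^{-1}\in Z(A)\cap\ker\xi=\{1\}$ gives the contradiction directly. Second, the phrase ``$\xi\circ\varphi$ must be surjective'' is really the statement that $\varphi$ induces $\pm\id$ on the abelianisation $A^{\mathrm{ab}}\cong\Z$; with that phrasing the dichotomy $1+k\,\xi(z)=\pm1$ and hence $k=0$ is immediate.
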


We now recall the relevant notions related to mapping class groups of surfaces.

Let $\Sigma$ be an orientable surface with or without boundary, and let $\PP$ be a finite collection of different points in the interior of $\Sigma$. The \emph{mapping class group} $\MM(\Sigma,\PP)$ of the pair $(\Sigma,\PP)$ is the group of orientation-preserving homeomorphisms of $\Sigma$, identical on the boundary and permuting the set $\PP$, considered up to isotopies identical on the boundary and fixing $\PP$ pointwise. If we allow orientation-reversing homeomorphisms in the above definition, we get the notion of the \emph{extended mapping class group} of the pair $(\Sigma,\PP)$, which is denoted $\MM^*(\Sigma,\PP)$. If a surface $\Sigma$ has nonempty boundary, then $\MM^*(\Sigma,\PP)=\MM(\Sigma,\PP)$, otherwise $\MM(\Sigma,\PP)$ is a subgroup of index $2$ in $\MM^*(\Sigma,\PP)$. The \emph{pure mapping class group} of the pair $(\Sigma,\PP)$ is the finite index subgroup $\PP\MM(\Sigma,\PP)$ of $\MM(\Sigma,\PP)$ which fixes the set $\PP$ pointwise. These groups fit into the short exact sequence:
\[
1\longrightarrow \PP\MM(\Sigma,\PP)\longrightarrow\MM(\Sigma,\PP)\longrightarrow\mathfrak S_N\longrightarrow 1,
\]
which does not split in general. Here $N=|\PP|$ is the cardinality of $\PP$ and $\mathfrak S_N$ is the symmetric group on $N$ letters. We refer the reader to~\cite{FarMar1} for more information on the mapping class groups.

Let $n\ge3$ and $\Sigma_n$ be the orientable surface without boundary of genus $\lfloor (n-1)/2\rfloor$, where $\lfloor x\rfloor$ denotes the largest integer which is less than or equal to $x$, and let $\PP_n$ be a finite set of punctures in $\Sigma_n$, with $|\PP_n|=2$, if $n$ is odd, and $|\PP_n|=3$, if $n$ is even, see Figure~\ref{fig:sigma}.

\begin{figure}[!hbt]
\begin{center}
\include{TikzForSigma}
\end{center}
\caption{Surface $(\Sigma_n,\PP_n)$ with $\PP_n=\{p_1,p_2\}$ for $n$ odd and $\PP_n=\{p_1,p_2,p_3\}$ for $n$ even. The image of the standard generator $\bar t_i$ of $\ov{A[D_n]}$ under the embedding of Proposition~\ref{prop:mcg} is the Dehn twist about the circle $c_i$. The hyperelliptic involution $\iota$ rotates the surface by angle $\pi$ in the ambient space.
\label{fig:sigma}}
\end{figure}

\begin{prop}\label{prop:mcg} 
Let $n\ge6$. There exists an embedding $\ov{A[D_n]}\lhook\joinrel\longrightarrow\MM(\Sigma_n,\PP_n)$ such that every automorphism of $\ov{A[D_n]}$ is induced by a conjugation with an element of $\MM^*(\Sigma_n,\PP_n)$. In particular, we have inclusions:
\[
\ov{A[D_n]}\le \Aut(\ov{A[D_n]})\le \Inn\bigl(\MM^*(\Sigma_n,\PP_n)\bigr)\simeq\MM^*(\Sigma_n,\PP_n).
\]
\end{prop}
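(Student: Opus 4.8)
The plan is to build the embedding first and then feed in the Castel--Paris description of $\Aut(\ov{A[D_n]})$ from Theorem~\ref{thm:autdn}; I will run the argument for $n\ge6$, where that theorem applies, although the embedding itself is constructed for every $n\ge4$. To begin, I would take the curves $c_1,\dots,c_n$ of Figure~\ref{fig:sigma} and verify that their geometric intersection numbers realize the Coxeter graph $D_n$: $i(c_i,c_j)=1$ exactly when $\{i,j\}$ is an edge (that is, $|i-j|=1$ with $i,j\le n-2$, or $\{i,j\}=\{n-2,n-1\}$ or $\{n-2,n\}$) and $i(c_i,c_j)=0$ otherwise, with all these minima attained simultaneously. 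By the classical relations between Dehn twists --- commutation for disjoint curves and the braid relation $T_aT_bT_a=T_bT_aT_b$ for twists about curves meeting once, see~\cite{FarMar1} --- the assignment $t_i\mapsto T_{c_i}$ then extends to a homomorphism $\phi\colon A[D_n]\to\MM(\Sigma_n,\PP_n)$. As one reads off from Figure~\ref{fig:sigma} --- and as an Euler characteristic count confirms, $\bigcup_i c_i$ being a $4$-valent graph with $n-1$ vertices on a surface of genus $\lfloor(n-1)/2\rfloor$ --- the curves $c_i$ fill $\Sigma_n$, their complementary regions being precisely the $|\PP_n|$ once-punctured disks around the points of $\PP_n$.

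Next I would check that $\phi$ descends to an injective homomorphism $\ov\phi\colon\ov{A[D_n]}\hookrightarrow\MM(\Sigma_n,\PP_n)$ with $\bar t_i\mapsto T_{c_i}$. To see that $Z(A[D_n])\subseteq\ker\phi$ I would invoke the geometric monodromy theory of $ADE$ singularities: under the monodromy representation of $A[D_n]$ on its Milnor fiber, the generator of the center --- $\Delta$ if $n$ is even, $\Delta^2$ if $n$ is odd --- maps to the multitwist along the boundary of a regular neighborhood of $\bigcup_i c_i$; by the filling property that boundary consists of curves each bounding a once-punctured disk in $\Sigma_n$, so this multitwist is trivial in $\MM(\Sigma_n,\PP_n)$. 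For injectivity of $\ov\phi$ I would use the faithfulness of the geometric monodromy representation of $A[D_n]$ on the Milnor fiber \emph{with boundary} (Wajnryb; see also~\cite{CasPar1}, whose approach rests on the same facts), together with the capping homomorphism to $\MM(\Sigma_n,\PP_n)$, whose kernel is generated by the boundary Dehn twists and meets the image of $A[D_n]$ in exactly the image of $Z(A[D_n])$; hence $\ker\phi=Z(A[D_n])$. This gives the first inclusion, $\ov{A[D_n]}\le\MM(\Sigma_n,\PP_n)\le\MM^*(\Sigma_n,\PP_n)$, with $\bar t_i$ identified with $T_{c_i}$.

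For the statement about automorphisms I would use Theorem~\ref{thm:autdn}: $\Aut(\ov{A[D_n]})$ is generated by $\Inn(\ov{A[D_n]})$, by $\bar\chi$, and, when $n$ is even, by $\bar\zeta$. Inner automorphisms are conjugations by elements of $\ov{A[D_n]}\le\MM^*(\Sigma_n,\PP_n)$, so it remains to realize $\bar\zeta$ and $\bar\chi$ by conjugation in $\MM^*(\Sigma_n,\PP_n)$. Now $\bar\zeta$ transposes $\bar t_{n-1}$ and $\bar t_n$ and fixes the remaining generators; an orientation-preserving self-homeomorphism $\sigma$ of $(\Sigma_n,\PP_n)$ visible in Figure~\ref{fig:sigma} interchanges $c_{n-1}$ and $c_n$ and fixes each $c_i$ ($i\le n-2$) up to isotopy, so conjugation by $[\sigma]$ sends $T_{c_i}$ to $T_{\sigma(c_i)}$ and realizes $\bar\zeta$. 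And $\bar\chi$ inverts every generator; the orientation-reversing involution $\rho$ of $(\Sigma_n,\PP_n)$ given by reflecting the picture through the plane of the page preserves each $c_i$ up to isotopy and fixes $\PP_n$, so conjugation by $[\rho]\in\MM^*(\Sigma_n,\PP_n)\setminus\MM(\Sigma_n,\PP_n)$ sends $T_{c_i}$ to $T_{\rho(c_i)}^{-1}=T_{c_i}^{-1}$ and realizes $\bar\chi$. The delicate point is verifying that $\sigma$ and $\rho$ act on the isotopy classes $[c_i]$ and on $\PP_n$ exactly as claimed, which is a direct inspection of Figure~\ref{fig:sigma}.

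Finally I would assemble the displayed inclusion chain. Let $N$ be the normalizer of $\ov{A[D_n]}$ in $\MM^*(\Sigma_n,\PP_n)$; the previous paragraph shows that conjugation is a surjection $N\to\Aut(\ov{A[D_n]})$ whose kernel is the centralizer $C_{\MM^*(\Sigma_n,\PP_n)}(\ov{A[D_n]})$. Since the $c_i$ fill $\Sigma_n$, any mapping class commuting with all the $T_{c_i}$ fixes every $[c_i]$; it cannot be orientation-reversing, since that would force $T_{g(c_i)}=T_{c_i}^{-1}$, which is impossible already for a non-separating $c_i$ by the action on $H_1(\Sigma_n)$, and an orientation-preserving mapping class fixing a filling system of curves is the identity by the Alexander method. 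Hence this centralizer is trivial, $N\cong\Aut(\ov{A[D_n]})$, and in particular $\MM^*(\Sigma_n,\PP_n)$ is centerless, so $\Inn(\MM^*(\Sigma_n,\PP_n))\simeq\MM^*(\Sigma_n,\PP_n)$; combining, $\ov{A[D_n]}\le\Aut(\ov{A[D_n]})=N\le\MM^*(\Sigma_n,\PP_n)\simeq\Inn(\MM^*(\Sigma_n,\PP_n))$. I expect the main obstacle to lie in the geometric verifications of the first three paragraphs --- the precise intersection and filling pattern of the $c_i$, the identification of $\ker\phi$ with $Z(A[D_n])$ through the Milnor-fiber monodromy, and the explicit symmetries $\sigma$ and $\rho$ --- rather than in the group-theoretic bookkeeping at the end.
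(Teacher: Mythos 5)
Your proposal follows essentially the same route as the paper: realize $\ov{A[D_n]}$ inside $\MM(\Sigma_n,\PP_n)$ by sending $\bar t_i\mapsto T_{c_i}$, identify the kernel of the bounded-surface representation on $\Sigma_n^\partial$ with $Z(A[D_n])$ through the boundary-twist subgroup, realize $\bar\zeta$ and $\bar\chi$ by the hyperelliptic involution and the page reflection, and use the Alexander method to pin everything down. Two organizational differences are worth noting. First, you build the representation from scratch via the braid relations and then appeal to faithfulness on the bounded Milnor fiber, whereas the paper simply starts from the Perron--Vannier embedding $A[D_n]\hookrightarrow\MM(\Sigma_n^\partial)$; the underlying facts are the same. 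Second, you close the argument by computing a normalizer and showing the centralizer of $\ov{A[D_n]}$ in $\MM^*(\Sigma_n,\PP_n)$ is trivial (filling plus Alexander method), whereas the paper writes down the semidirect-product decomposition of $\Aut(\ov{A[D_n]})$ matching Theorem~\ref{thm:autdn}, which requires an extra check that for $n$ odd the involution $\iota$ is actually inner (equal to $\bar\rho(\pi(\Delta))$), via a second application of the Alexander method. Your route has the small bonus of deducing centerlessness of $\MM^*(\Sigma_n,\PP_n)$ from the trivial centralizer rather than citing the classification of mapping class group centers. One step you assert without justification is the crux of injectivity of $\ov\phi$: that the kernel of capping meets $\rho(A[D_n])$ in exactly $\rho(Z(A[D_n]))$. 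The inclusion $\supseteq$ follows from the explicit Labru\`ere--Paris (or Matsumoto) formulas expressing the image of $\Delta$ (resp.\ $\Delta^2$) as a product of boundary twists; the inclusion $\subseteq$ follows because boundary twists are central in $\MM(\Sigma_n^\partial)$, so the intersection sits in $Z(\rho(A[D_n]))=\rho(Z(A[D_n]))$ by faithfulness. You should spell this out. Finally, calling the image of the center ``the multitwist along the boundary of a regular neighborhood'' is a mild overstatement, since the boundary components appear with unequal positive exponents in the actual formula; this does not, however, affect triviality after capping.
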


\begin{figure}
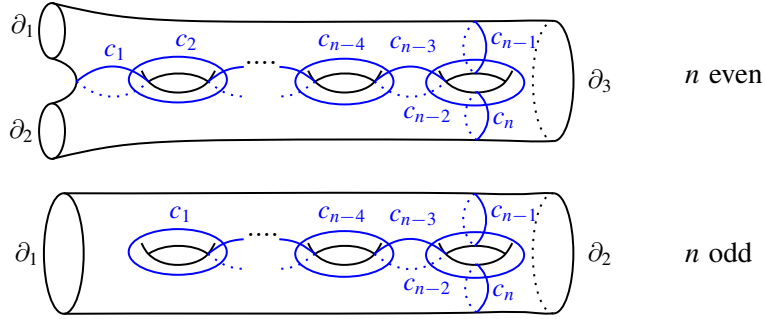

\begin{center}
\include{TikzForSigmabdry}
\end{center}
\caption{Surface $\Sigma_n^\partial$ with the images of standard generators $t_i$ of $A[D_n]$ under the Perron--Vannier embedding. (The image of $t_i$ is the Dehn twist about the circle marked $c_i$.)
\label{fig:sigmabdry}}
\end{figure}

\begin{proof}
There is a well-known embedding $\rho\colon A[D_n]\lhook\joinrel\longrightarrow\MM(\Sigma_n^\partial)$ of Perron and Vannier~\cite{PerVan1}, where $\Sigma_n^\partial$ is the surface with boundary obtained from $(\Sigma_n,\PP_n)$ by making every puncture in $\PP_n$ into a boundary component, see Figure~\ref{fig:sigmabdry}. (A more streamlined proof for a similar embedding, for the surface obtained from $(\Sigma_n,\PP_n)$ by making all but one punctures of $\PP_n$ into boundary components, was recently given in~\cite[Theorem~3.6]{CasPar1}.)

On the other hand, the inclusion of surfaces $\Sigma_n^\partial \lhook\joinrel\longrightarrow\Sigma_n$ induces the homomorphism $\eta\colon\MM(\Sigma_n^\partial)\to\MM(\Sigma_n,\PP_n)$ with kernel $\ker\eta\simeq\Z^N$ generated by boundary twists $T_{\partial_i}$, where $N=|\PP_n|\in\{2,3\}$, see~\cite[Theorem~3.18]{FarMar1}. Moreover, $\ker\eta$ lies in the center of $\MM(\Sigma_n^\partial)$, since boundary twists commute with all mapping classes in $\MM(\Sigma_n^\partial)$.

We claim that $\rho(A[D_n])\cap \ker\eta=\rho(Z(A[D_n]))$. 
Indeed, since $\ker\eta$ is central in $\MM(\Sigma_n^\partial)$, we have inclusion $\rho(A[D_n])\cap \ker\eta\subseteq\rho(Z(A[D_n]))$. On the other hand, we know the image of the generator of the center $Z(A[D_n])$ under $\rho$ by~\cite[Proposition~2.12]{LabPar1} (see also~\cite[Table~1]{Matsu1}):
\begin{gather*}
\rho(\Delta^2)=T_{\partial_1}T_{\partial_2}^{n-2},\qquad\text{if $n$ is odd;}\\
\rho(\Delta)=T_{\partial_1}T_{\partial_2}T_{\partial_3}^{n/2-1},\qquad\text{if $n$ is even}.
\end{gather*}
This shows that $\rho(Z(A[D_n]))$ lies in $\ker\eta=\langle T_{\partial_i}\rangle$, and we conclude that $\rho(A[D_n])\cap \ker\eta=\rho(Z(A[D_n]))$.

Thus we can define an embedding $\bar\rho\colon\ov{A[D_n]}\lhook\joinrel\longrightarrow\MM(\Sigma_n,\PP_n)$ induced by $\rho$:
\[
\bar\rho\colon \ov{A[D_n]}=A[D_n]/Z(A[D_n])\longrightarrow \MM(\Sigma_n^\partial)/\ker\eta=\PP\MM(\Sigma_n,\PP_n)\le\MM(\Sigma_n,\PP_n).
\]
We see that
\[
\bar\rho(\bar t_i)=T_{c_i},\quad \text{for all $1\le i\le n$},
\]
where $T_{c_i}$ are the Dehn twists about the curves $c_i$ shown in Figure~\ref{fig:sigma}. For the rest of the proof we identify $\ov{A[D_n]}$ with its image under $\bar\rho$ in $\MM(\Sigma_n,\PP_n)$.

Recall that $\Aut(\ov{A[D_n]})$ is generated by $\Inn(\ov{A[D_n]})$,  $\bar\zeta$, and $\bar\chi$, by Theorem~\ref{thm:autdn}. Naturally, any inner automorphism of $\ov{A[D_n]}\le\MM(\Sigma_n,\PP_n)$ is induced by the corresponding inner automorphism of the ambient group $\MM(\Sigma_n,\PP_n)$, hence we just need to realize $\bar \zeta$ and $\bar\chi$ as conjugations inside $\MM^*(\Sigma_n,\PP_n)$.

For automorphism $\bar\zeta$, consider the mapping class $\iota$, which (viewed on the closed surface) is called \emph{a hyperelliptic involution}, and which can be described as the rotation of the surface in the ambient space by $180$ degrees about the horizontal axis, see Figure~\ref{fig:sigma}. Clearly,
\begin{align*}
\iota \circ T_{c_i}\circ\iota &= T_{\iota(c_i)}=T_{c_i}, \text{\qquad for\,\,\, $1\le i\le n-2$,\quad and}\\
\iota \circ T_{c_{n-1}}\circ\iota &= T_{\iota(c_{n-1})}=T_{c_n},\\
\iota \circ T_{c_{n}}\circ\iota &= T_{\iota(c_{n})}=T_{c_{n-1}}.
\end{align*}
(Here we treat each $c_i$ as the isotopy class of unoriented curves containing the circle $c_i$ drawn in Figure~\ref{fig:sigma}.) We see that, indeed, the conjugation by $\iota$ induces the same automorphism on $\ov{A[D_n]}$ as $\bar\zeta$. 

If $n$ is odd, then $\iota$ is equal to $\bar\rho(\pi(\Delta))$ as mapping classes in $\MM(\Sigma_n,\PP_n)$. Indeed, from the properties of $\Delta$ mentioned in the beginning of the section, it follows that conjugating $\ov{A[D_n]}$ by $\pi(\Delta)$ induces automorphism $\bar\zeta$. Thus, $\phi=\iota\circ\bar\rho(\pi(\Delta))$ leaves all curves $c_i$ invariant (viewed as isotopy classes of unoriented curves). Now we notice that cutting the punctured surface $(\Sigma_n,\PP_n)$ along the curves $\{c_i\}_{i=1}^n$ results in two once-punctured disks such that the homeomorphism of their boundary induced by $\phi$ is identity. The curves $\{c_i\}$ fill the surface $(\Sigma_n,\PP_n)$, so we can apply the Alexander method (\cite[Proposition~2.8]{FarMar1}) and conclude that $\phi=\id$, and hence $\iota=\bar\rho(\pi(\Delta))$. This proves that, for $n$ odd, the conjugation by $\iota$ is an inner automorphism of $\bar\rho(\ov{A[D_n]})$.

If $n$ is even, then $\iota$ does not belong to $\bar\rho(\ov{A[D_n]})$, since $\iota$ interchanges punctures $p_1$ and $p_2$, whereas the image of $\bar\rho$ lies entirely in $\PP\MM(\Sigma_n,\PP_n)$, so conjugating by $\iota$ induces the outer automorphism $\bar\zeta$ of $\bar\rho(\ov{A[D_n]})$.

For automorphism $\bar\chi$, consider the orientation-reversing mapping class $X\in\MM^*(\Sigma_n,\PP_n)$ which can be described as the reflection of the surface $\Sigma_n$ in the ``plane of the picture'' in Figure~\ref{fig:sigma}. Since $X$ preserves each $c_i$ setwise but reverses orientation, we see that
\[
X\circ T_{c_i}\circ X=T^{-1}_{X(c_i)}=T^{-1}_{c_i},\qquad\text{for\ \ $1\le i\le n$}.
\]
This means that the conjugation by $X$ induces automorphism $\bar\chi$ on $\bar\rho(\ov{A[D_n]})$. Since $X$ reverses orientation, and all mapping classes from $\bar\rho(\ov{A[D_n]})$ preserve it, we see that the conjugation by $X$ is an outer automorphism of $\bar\rho(\ov{A[D_n]})$, as expected.

We check by inspection that $X\circ \iota=\iota\circ X$, and that $\iota\circ X$ does not belong to $\bar\rho(\ov{A[D_n]})$ since it reverses orientation. This means that we have the following group isomorphisms:
\begin{align*}
\Aut(\ov{A[D_n]})&\simeq
\bar\rho(\ov{A[D_n]})\rtimes\bigl(\langle \conj_\iota\rangle\times\langle \conj_X\rangle\bigr)\le\Inn\bigl(\MM^*(\Sigma_n,\PP_n)\bigr),\qquad\text{if $n$ is even,}\\
\Aut(\ov{A[D_n]})&\simeq
\bar\rho(\ov{A[D_n]})\rtimes\langle \conj_X\rangle\le\Inn\bigl(\MM^*(\Sigma_n,\PP_n)\bigr),\qquad\text{if $n$ is odd.}
\end{align*}
It remains to show the last isomorphism: $\Inn\bigl(\MM^*(\Sigma_n,\PP_n)\bigr)\simeq \MM^*(\Sigma_n,\PP_n)$. Let $z$ be a nontrivial central element in $\MM^*(\Sigma_n,\PP_n)$. By the classification of centers of $\MM(\Sigma,\PP)$ for various surfaces with punctures $(\Sigma,\PP)$ given in~\cite[p.\,77]{FarMar1}, we conclude that $z$ cannot belong to $\MM(\Sigma_n,\PP_n)$, if $n\ge4$. Hence $z$ reverses orientation. Since $z$ commutes with every Dehn twist $T_c$, we have:
\[
T_c = z\, T_c\, z^{-1} = T^{-1}_{z(c)},
\]
which contradicts the property of Dehn twists:
\[
T_a^k=T_b^\ell\quad \Longleftrightarrow\quad a=b \text{\quad and \quad} k=\ell,
\]
see~\cite[p.\,75]{FarMar1}. Hence $Z\bigl(\MM^*(\Sigma_n,\PP_n)\bigr)=1$, and the result follows.
\end{proof}

Now we are ready to prove Theorem~\ref{thm:1}.

\begin{proof}[Proof of Theorem~\ref{thm:1}]
Our goal is to apply Corollary~\ref{cor:7}. 

As was mentioned before, it is known that the center of $\ov{A[D_n]}$ is trivial. 
By Proposition~\ref{prop:mcg}, we have an embedding
\[
\ov{A[D_n]}\le\Aut(\ov{A[D_n]})\le\MM^*(\Sigma_n,\PP_n).
\]
We use the well-known fact that $\MM^*(\Sigma_n,\PP_n)$ acts by isometries on the curve complex $\CC=\CC(\Sigma_n,\PP_n)$ of the surface. This complex is Gromov-hyperbolic by the result of Masur and Minsky~\cite[Theorem~1.1]{MasMir1}, if the surface has big enough complexity, i.e.\ if $3g+p-4>0$, where $g$ denotes the genus of the surface, and $p$ the number of punctures. Recalling that $\Sigma_n$ has genus $g=\lfloor (n-1)/2\rfloor$, with $p=2$ for $n$ odd and $p=3$ for $n$ even, we see that the required inequality is satisfied already for $n\ge3$ and a fortiori for $n\ge6$.

We need to prove that the action of $\ov{A[D_n]}$ on $\CC$ is non-elementary. By~\cite[Proposition~4.6]{MasMir1}, an element $g$ of $\MM^*(\Sigma_n,\PP_n)$ acts loxodromically on $\CC$ if and only if $g$ is pseudo-Anosov. It will be sufficient to produce two pseudo-Anosov elements $g,h\in\ov{A[D_n]}$ which generate a rank $2$ free group. To produce pseudo-Anosov elements we use Penner's construction~\cite{Penne1}, \cite[Theorem~14.4]{FarMar1}:
\begin{penners}
Let $A=\{\alpha_1,\dots,\alpha_\ell\}$ and $B=\{\beta_1,\dots,\beta_m\}$ be multicurves in a surface $\Sigma$ that together fill $\Sigma$. Any product of positive powers of the $T_{\alpha_i}$ and negative powers of the $T_{\beta_i}$, where each $\alpha_i$ and each $\beta_i$ appear at least once, is pseudo-Anosov.\qed
\end{penners}
Recall that a collection of curves \emph{fills} a surface $\Sigma$, if the surface obtained from $\Sigma$ by cutting along all the curves from the collection, is a disjoint union of disks and once-punctured disks.

To choose multicurves $A$ and $B$ for application of Penner's Construction in the surface $(\Sigma_n,\PP_n)$ we notice that the family of circles $\{c_i\}_{i=1}^n$ in Figure~\ref{fig:sigma} cuts the surface $\Sigma_n$ into two or three once-punctured disks, i.e.\ it fills the surface $\Sigma_n$. This allows us to set:
\begin{align*}
A=\{c_{n},c_{n-1},c_{n-3},\dots,c_3,c_1\}, \quad
&B=\{c_{n-2},c_{n-4},\dots,c_4,c_2\},\quad\text{
if $n$ is even,\, and}\\
A=\{c_{n},c_{n-1},c_{n-3},\dots,c_4,c_2\},\quad
&B=\{c_{n-2},c_{n-4},\dots,c_3,c_1\},\quad\text{
if $n$ is odd.}
\end{align*}
Next, we introduce elements 
\[
f_A=\textstyle\prod_{c_i\in A}T_{c_i}^2,\qquad f_B=\prod_{c_j\in B}T_{c_j}^2,
\]
and set
\[
g=f_A^2\,f_B^{-1},\qquad h=f_A\,f_B^{-2}.
\]
Notice that the twists $T_{c_i}$ participating in $f_A$ commute since the curves $c_i\in A$ are disjoint, and the same observation applies for twists $T_{c_j}$ in $f_B$. Hence, by Penner's Construction, $g$ and $h$ are pseudo-Anosov. 

Now consider elements $Q_i=T_{c_i}^2$ as mapping classes on the surface $\Sigma_n^\partial$, see Figure~\ref{fig:sigmabdry}. They belong to the subgroup $A[D_n]\le\MM(\Sigma_n^\partial)$, by the Perron--Vannier embedding~\cite{PerVan1}.
By the solution of Tits' conjecture by Crisp and Paris~\cite{CriPar1}, we know that elements $Q_i$ generate a right-angled Artin subgroup $K=\langle\, Q_i\mid 1\le i\le n\,\rangle$ of $A[D_n]$, which has trivial center $Z(K)=1$, see e.g.~\cite[Section 2.3]{Charn1}. Thus, $K$ maps isomorphically into $\MM(\Sigma_n,\PP_n)$ via $\eta\colon\MM(\Sigma_n^\partial)\to\MM(\Sigma_n,\PP_n)$ (since $\ker\eta$ is central in $\MM(\Sigma_n^\partial)$, see the proof of Proposition~\ref{prop:mcg}), and $g$, $h$ belong to $\eta(K)$ by construction.

It is easy to see that $g$ and $h$ do not commute. Indeed, let $a=Q_n$, $b=Q_{n-2}$, and consider an epimorphism $\pi\colon K\to\langle a,b\rangle$, which sends $Q_n\mapsto a$, $Q_{n-2}\mapsto b$, and  all generators $Q_i$, $i\ne n,n-2$ to $1$. Since the curves $c_n$ and $c_{n-2}$ are not disjoint, $a$ and $b$ generate a rank $2$ free group. The image of the commutator $[g,h]$ under $\pi$ is $[\pi(g),\pi(h)]=[a^2b^{-1},ab^{-2}]\ne1$. Therefore, $[g,h]\ne1$ in $K$.

Now, the result of Baudisch~\cite[Theorem~1.2]{Baudi1}, shows that $g$ and $h$ generate a rank $2$ free group. Therefore the action of $\ov{A[D_n]}$ on $\CC$ is non-elementary.

Applying Corollary~\ref{cor:7}, we conclude that $\ov{A[D_n]}$ has property $R_\infty$, and hence, by Lemma~\ref{lem:epi}, $A[D_n]$ has property $R_\infty$ as well.
\end{proof}

\section{Results for large-type Artin groups}\label{sec4}

{\it In this section $\Gamma$ denotes the presentation graph of $A[\Gamma]$, not its Coxeter graph, see the definition below.}

Let $A[\Gamma]$ be a large-type Artin group. Throughout this section we suppose that $\Gamma$ has at least $3$ vertices, ensuring that $A[\Gamma]$ is non-spherical. As is customary when working with general (i.e.\ not spherical and not affine) Artin groups, we consider $\Gamma$ to be the presentation graph of the Artin group (as opposed to its Coxeter graph), which is defined as follows. Let $S$ be a standard generating set for the Artin group. Then the \emph{presentation graph} associated with $S$ is the graph $\Gamma$ whose vertex set $V(\Gamma)$ is $S$, and for which there is an edge with label $m_{st}$ between $s$ and $t$ if and only if $m_{st} \ne \infty$. (I.e.~in the presentation graph, two vertices $s,t$ with $m_{st}=2$ are connected with an edge labeled $2$, and two vertices $s,t$ with $m_{st}=\infty$ are disconnected, whereas in the Coxeter graph the situation is reversed: vertices $s,t$ with $m_{st}=2$ are disconnected, but the ones with $m_{st}=\infty$ are connected with an edge labeled $\infty$.)

We consider the subgroup $\Aut_{\Gamma}(A[\Gamma])$ of $A[\Gamma]$ generated by the following automorphisms:
\begin{itemize}
    \item \emph{inner automorphisms}, i.e.\ conjugations of the form ${\conj_h}\colon g \mapsto h g h^{-1}$;
    \item \emph{graph-induced automorphisms}, i.e.\ automorphisms $\sigma$ that consist of a permutation of the standard generators induced by a label-preserving graph automorphism of $\Gamma$;
    \item \emph{the global inversion}, i.e.\ the automorphism $\chi$ that sends every standard generator to its inverse.
\end{itemize}

Let $A[\Gamma]$ be an Artin group. It is known by \cite{Lek1} that for every full subgraph $\Gamma' \subseteq \Gamma$, the subgroup generated by the vertices of $\Gamma'$ is isomorphic to the Artin group $A[\Gamma']$. Such subgroups are called \emph{standard parabolic subgroups}.

\begin{defin}[\protect{Charney--Davis~\cite{ChaDav1}}]
    Let $A[\Gamma]$ be any Artin group. The \emph{Deligne complex} $D_{\Gamma}$ is the simplicial complex defined as follows:
    \begin{itemize}
        \item The vertex set of $D_{\Gamma}$ is the set of left cosets of the form $g A[\Gamma']$ where $g \in A[\Gamma]$ and $A[\Gamma']$ is a spherical standard parabolic subgroup of $A[\Gamma]$;
        \item For every string of strict inclusion of the form $g_0 A[\Gamma_0] \subsetneq \cdots \subsetneq g_n A[\Gamma_n]$, we put an $n$-simplex between the associated $n+1$ vertices.
    \end{itemize}
\end{defin}    
    The group $A[\Gamma]$ acts on $D_{\Gamma}$ by left-multiplication, and this action is by simplicial isomorphisms.

    It is a classical result for Coxeter groups that if $W[\Gamma]$ is large-type and $\Gamma$ has at least three vertices then $W[\Gamma]$ is infinite. In particular, if $A[\Gamma]$ is large-type then for any induced subgraph $\Gamma' \subseteq \Gamma$ with at least three vertices the subgroup $A[\Gamma']$ is non-spherical. Consequently, a spherical standard parabolic subgroup $A[\Gamma']$ of a large-type Artin group $A[\Gamma]$ necessarily satisfies $|V(\Gamma')| \le 2$. 
    
    In particular, Charney--Davis prove that the associated Deligne complex $D_{\Gamma}$ is a $2$-dimensional simplicial complex which, when endowed with the \emph{Moussong metric}, becomes a CAT(0) metric space, see~\cite[p.\,623]{ChaDav1}.

One can naturally construct an action of $\Inn(A[\Gamma])$ on $D_{\Gamma}$ by declaring that the inner automorphism $\conj_g$ acts as the element $g$. One can actually extend this to a compatible action of $\Aut_{\Gamma}(A[\Gamma])$ on  $D_{\Gamma}$ as follows:

\begin{defin}[Jones--Vaskou~\protect{\cite[Definition~2.13]{JonVas1}}]\label{defi:actionofAut} The group $\Aut_{\Gamma}(A[\Gamma])$ acts by simplicial isomorphisms on $D_{\Gamma}$ as follows. Let $g A[\Gamma']$ be a vertex of $D_{\Gamma}$, where $g \in A[\Gamma]$ and $\Gamma' \subseteq \Gamma$ is an induced subgraph. Then:
\begin{itemize}
    \item (inner automorphisms) ${\conj_h} \cdot g A[\Gamma'] \coloneqq hg A[\Gamma']$;
    \item (graph automorphisms) $\sigma \cdot g A[\Gamma'] \coloneqq \sigma(g) A[\sigma(\Gamma')]$;
    \item (global inversion) $\chi \cdot g A[\Gamma'] \coloneqq \chi(g) A[\Gamma']$.
\end{itemize}
\end{defin}

The following properties of the Deligne complex are known to experts, however they do not seem to be explicitly stated in the literature, so we provide clear statements and proofs here. 

\begin{lem} \label{lem:ActionByIsometries}
    Let $A[\Gamma]$ be a large-type Artin group, and let $f\colon D_{\Gamma} \to D_{\Gamma}$ be a simplicial automorphism of $D_{\Gamma}$. Then:
    \begin{enumerate}
        \item For every vertex $g A[\Gamma']$, the image $f(g A[\Gamma']) = g' A[\Gamma'']$ satisfies $|V(\Gamma')| = |V(\Gamma'')|$;
        \item $f$ is a simplicial isometry, i.e.\ it sends simplices to simplices isometrically;
        \item $f$ is an isometry.
    \end{enumerate}
\end{lem}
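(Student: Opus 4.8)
The plan is to exploit the structure of the Deligne complex $D_\Gamma$ of a large-type Artin group, where all spherical standard parabolics have rank $\le 2$: the vertex set therefore partitions into three \emph{types} according to the rank $|V(\Gamma')| \in \{0,1,2\}$ of the coset $gA[\Gamma']$ it represents. The rank-$0$ vertices are the translates of the trivial subgroup (the ``deepest'' vertices), the rank-$2$ vertices are the translates of dihedral parabolics $A[\Gamma']$ with $\Gamma'$ an edge (the ``shallowest''), and the rank-$1$ vertices sit in between. For part~(1), I would first give a purely combinatorial characterization of the type of a vertex in terms of the local structure of $D_\Gamma$ at that vertex — for instance, a vertex $v$ has rank $0$ if and only if every edge at $v$ leads to a vertex whose own link is strictly larger, equivalently $v$ is never the strictly-smaller endpoint of an inclusion; a rank-$2$ vertex is characterized dually as one that is never the strictly-larger endpoint; and rank $1$ is what remains. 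Since any simplicial automorphism $f$ preserves the combinatorial link of every vertex and the incidence relation on edges, it must preserve each of these three combinatorially-defined classes, hence preserves the rank. (One should double-check that there is no ``accidental coincidence'' of links between vertices of different ranks; this follows from the fact that links of rank-$0$, rank-$1$, rank-$2$ vertices in a large-type $D_\Gamma$ are, respectively, copies of (a subdivision of) $\Gamma$, an infinite bipartite-type graph coming from a single $I_2(m)$ Deligne complex, and the Bass--Serre tree / bipartite graph $A[\Gamma']/\langle s\rangle \sqcup A[\Gamma']/\langle t\rangle$ — these are pairwise non-isomorphic.)

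For part~(2), the point is that $D_\Gamma$ has only two shapes of $2$-simplex up to isometry in the Moussong metric: a triangle with a fixed set of edge lengths and vertex angles determined only by the \emph{ranks} of its three vertices and, in the rank-$\{0,1,2\}$ triangle, by the label $m_{st}$ of the dihedral Artin group at the rank-$2$ vertex. Every $2$-simplex of $D_\Gamma$ joins a rank-$0$, a rank-$1$, and a rank-$2$ vertex (a chain $gA[\emptyset] \subsetneq gA[\langle s\rangle] \subsetneq gA[\Gamma']$), so by part~(1) a simplicial automorphism sends such a chain to another such chain. It remains to check that the metric on a $2$-simplex depends only on the isomorphism type of the triangle as a labelled simplicial object, i.e. on the label $m_{st}$ attached to the rank-$2$ vertex — and this label is recoverable combinatorially, since the link of a rank-$2$ vertex $gA[\langle s,t\rangle]$ inside $D_{\langle s,t\rangle}$ is the bipartite graph with girth $2m_{st}$ (the $1$-skeleton of the $I_2(m_{st})$ Deligne complex), so $f$ preserves $m_{st}$. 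Hence $f$ restricted to each $2$-simplex is a simplicial isomorphism onto a $2$-simplex with the \emph{same} shape, which is therefore realized by an isometry (an isometry of a Euclidean/hyperbolic triangle is determined by, and exists for, any matching of vertices respecting side lengths).

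Part~(3) is then almost immediate: $D_\Gamma$ with the Moussong metric is a geodesic (in fact CAT(0)) space obtained by gluing these $2$-simplices along their faces by isometries, and $f$ is a bijection that restricts to an isometry on each closed simplex and respects the face identifications; a standard argument (a path metric is computed by infima of lengths of piecewise-linear paths through simplices, and $f$ preserves the length of each such path) shows $f$ is globally distance-preserving, hence an isometry.

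The main obstacle I expect is part~(2): one must be careful that the Moussong metric really is determined by the combinatorial data that $f$ preserves — in particular that the single number $m_{st}$, rank data, and the face-gluing pattern are enough, with no hidden dependence on the ambient graph $\Gamma$ or on which coset representative $g$ was chosen. Verifying this amounts to recalling the explicit description of the Moussong metric on $D_\Gamma$ from \cite{ChaDav1} (each simplex is the geometric realization of a spherical-subset chain, metrized via the corresponding finite Coxeter group) and confirming that for rank $\le 2$ this gives exactly two simplex-isometry-types per value of $m_{st}$. Parts~(1) and~(3) are then routine given~(2).
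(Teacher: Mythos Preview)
Your plan matches the paper's proof in all three parts: distinguish vertex types by link invariants, recover the label $m$ at a rank-$2$ vertex from the combinatorics of its link, then pass from simplex-wise isometry to global isometry via path lengths. Two small corrections are needed. In~(1), your phrase ``never the strictly-smaller endpoint of an inclusion'' is backwards (a rank-$0$ coset is always the \emph{minimal} element of any chain through it), and in any case a simplicial automorphism of an order complex need not preserve the poset direction; the paper instead uses the trichotomy that the link of a vertex is finite, infinite of bounded diameter, or unbounded according as the rank is $0$, $1$, or $2$, which is what your parenthetical fallback (non-isomorphic link types) amounts to. In~(2), the girth of the link of a rank-$2$ vertex is $4m$, not $2m$: a non-backtracking loop corresponds to a trivial word $a^{n_1}b^{n_2}\cdots$ in the dihedral Artin group with $k\ge 2m$ syllables, and such a loop has simplicial length $2k$.
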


\begin{proof}
    (1) Recall that for a simplicial complex $X$, the \emph{link} of a vertex $v$ is the subcomplex $\lk(v) \subseteq X$ obtained as the union of all the simplices that are disjoint from $v$ but which belong to simplices that contain $v$. Also recall that because $A[\Gamma]$ is large-type, any spherical standard parabolic subgroup $A[\Gamma']$ satisfies $|V(\Gamma')| \le 2$. The first statement directly follows from the fact that the link $\lk(g A[\Gamma'])$ is:
    \begin{itemize}
        \item finite, if and only if $|V(\Gamma')| = 0$;
        \item infinite but bounded, if and only if $|V(\Gamma')| = 1$;
        \item unbounded, if and only if $|V(\Gamma')| = 2$,
    \end{itemize}
    where the above follows from ~\cite[Remark 3.4, Proposition E]{Vasko1}. 

    (2) It is enough to prove the second statement for $2$-simplices, as the other types of simplices isometrically embed in them. If $\Delta$ is  such a simplex, we notice that $\Delta$ must have the form $\Delta = \bigl(g_0 A[\Gamma_0], g_1 A[\Gamma_1], g_2 A[\Gamma_2]\bigr)$, where for each $i \in \{0, 1, 2\}$ we have $|V(\Gamma_i)| = i$.  Recall that the Moussong metric $d$ is defined on $\Delta$ by letting $(\Delta, d)$ be isometric to the only Euclidean triangle satisfying:
    \[
    d\bigl(g_0 A[\Gamma_0], g_1 A[\Gamma_1]\bigr) = 1,  \quad
    \angle_{g_1 A[\Gamma_1]} \bigl(g_0 A[\Gamma_0], g_2 A[\Gamma_2]\bigr) = \frac{\pi}{2}, \quad
    \angle_{g_2 A[\Gamma_2]} \bigl(g_0 A[\Gamma_0], g_1 A[\Gamma_1]\bigr) = \frac{\pi}{2m},
    \]
    where $m$ is the coefficient of the dihedral Artin group $A[\Gamma_2]$, see ~\cite{ChaDav1}.
    
    The map $f$ is a simplicial automorphism, so the image $f(\Delta)$ is a $2$-simplex. As for $\Delta$, it takes the form $f(\Delta) = \bigl(g_0' A[\Gamma_0'], g_1' A[\Gamma_1'], g_2' A[\Gamma_2']\bigr)$, where $|V(\Gamma_i')| = i$ for $i \in \{0, 1, 2\}$. By (1), we know that $f(g_i A[\Gamma_i]) = g_i' A[\Gamma_i']$ for $i \in \{0, 1, 2\}$. In particular, by construction of $d$ we have
    \[
    d\bigl(g_0' A[\Gamma_0'], g_1' A[\Gamma_1']\bigr) = 1,  \quad
    \angle_{g_1' A[\Gamma_1']} \bigl(g_0' A[\Gamma_0'], g_2' A[\Gamma_2']\bigr) = \frac{\pi}{2}, \quad
    \angle_{g_2' A[\Gamma_2']} \bigl(g_0' A[\Gamma_0'], g_1' A[\Gamma_1']\bigr) = \frac{\pi}{2m'},
    \]
    where $m'$ is the coefficient of the dihedral Artin group $A[\Gamma_2']$. So all that is left to show is that $m = m'$. For a graph $G$, let us denote by $sys(G)$ the \emph{systole} of $G$, that is the smallest simplicial length of a non-contractible loop in $G$ (also known as the girth of $G$). Since $f$ acts by simplicial automorphisms, it is clear that $sys(\lk(g_2 A[\Gamma_2])) = sys(\lk(g_2' A[\Gamma_2']))$. We will show that $sys(\lk(g_2 A[\Gamma_2])) = 4m$, and that, similarly, $sys(\lk(g_2' A[\Gamma_2'])) = 4m'$. Together, this will imply that $m = m'$.

    Let $V(\Gamma_2)=\{a,b\}$. It was proved in~\cite[Lemma~4.2]{Vasko1} that every non-backtracking loop $\gamma$ in $\lk(g_2 A[\Gamma_2])$ corresponds to a reduced word $w$ in $\{a, b\}$ such that:
    \begin{itemize}
        \item The word $w$ projects to the trivial element in the dihedral Artin group $\langle a, b \rangle$;
        \item Without loss of generality we can write $w = a^{n_1} b^{n_2} \dots x^{n_k}$ for some $x \in \{a, b\}$ and $k \ge 1$;
        \item The simplicial length $\ell(\gamma)$ of $\gamma$ is equal to the integer $2k$. 
    \end{itemize}
    It was also proved in~\cite[Lemma~6]{AppSch1} that we have $k \ge 2m$. Moreover, this lower bound is reached (for instance, take $w$ to be the relator of the dihedral Artin group $\langle a, b \rangle$). It follows that $sys(\lk(g_2 A[\Gamma_2])) = 4m$, as desired.
    
    (3) We now prove the third statement. Let $x$ and $y$ be any two points (not necessarily vertices) in $D_{\Gamma}$. Recall that, by definition, the Moussong metric $d$ can be extended from simplices to the whole of $D_{\Gamma}$ by letting $d(x, y)$ be the infimum of the lengths of the paths connecting $x$ and $y$, where the length of a path $\gamma$ is computed as the sum of the lengths of the sub-paths $\{\gamma_i\}_{i \in I}$ obtained by restricting $\gamma$ to the various simplices $\{\Delta_i\}_{i \in I}$ it travels through. The fact that $(D_{\Gamma}, d)$ is a CAT(0) metric space ensures that there is a unique geodesic $\gamma^{x, y}$ connecting $x$ and $y$ (in particular, the above infimum is always attained). Let us decompose $\gamma^{x, y} = \bigcup\limits_{i \in I} \gamma^{x, y}_i$ as before. The map $f\colon D_{\Gamma} \to D_{\Gamma}$ is a simplicial isometry, so the image $f(\gamma^{x, y}) = \bigcup\limits_{i \in I} f(\gamma^{x, y}_i)$ is a path between $f(x)$ and $f(y)$ for which the length of $f(\gamma^{x, y}_i)$ is the same as the length of $\gamma^{x, y}_i$ for all $i \in I$. It follows that $f(\gamma^{x, y})$ is a path of length $d(x, y)$ between $f(x)$ and $f(y)$, which shows that $d(f(x), f(y)) \le d(x,y)$. Since $f$ is bijective, the same argument works for $f^{-1}$, so that $d(x, y) \le d(f(x), f(y))$. This yields $d(f(x), f(y)) = d(x,y)$ for all $x, y \in D_\Gamma$, i.e.\ $f$ is an isometry. 
\end{proof}

Recall that an Artin group $A[\Gamma]$ is said to be hyperbolic-type, if its Coxeter group $W[\Gamma]$ is Gromov-hyperbolic, and for the large-type Artin groups the requirement to be hyperbolic-type is equivalent to the absence of triangles in $\Gamma$ with all edge labels $3$.

\begin{thm} \label{Thm:SmallAutImpliesRinfty}
Let $A[\Gamma]$ be an Artin group of large and hyperbolic type, and suppose that $\Aut(A[\Gamma]) = \Aut_{\Gamma}(A[\Gamma])$. Then $A[\Gamma]$ has property $R_{\infty}$.
\end{thm}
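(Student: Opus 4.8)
The plan is to deduce the theorem from Corollary~\ref{cor:7}, applied with $G = A[\Gamma]$ and with $X = D_\Gamma$, the Deligne complex endowed with the Moussong metric. Concretely, three facts need to be verified: that $Z(A[\Gamma]) = 1$; that $\Aut(A[\Gamma])$ acts by isometries on a Gromov-hyperbolic space; and that the restriction of this action to $\Inn(A[\Gamma]) \cong A[\Gamma]$ is non-elementary.

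For the first fact, since $A[\Gamma]$ is large-type, of hyperbolic-type, and $\Gamma$ has at least three vertices, $A[\Gamma]$ is acylindrically hyperbolic (see~\cite{Vasko1}); being torsion-free, it has trivial maximal finite normal subgroup, and since the center is contained in that subgroup we obtain $Z(A[\Gamma]) = 1$. Hence Corollary~\ref{cor:7} is applicable directly, without first passing to the central quotient, and moreover the identification $\Inn(A[\Gamma]) \cong A[\Gamma]$ holds, under which $\conj_g$ corresponds to $g$.

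For the second fact, recall that for a hyperbolic-type Artin group the Moussong metric makes $D_\Gamma$ not merely CAT(0) but Gromov-hyperbolic, as noted in Section~\ref{sec1}. By Definition~\ref{defi:actionofAut}, $\Aut_\Gamma(A[\Gamma])$ acts on $D_\Gamma$ by simplicial isomorphisms, and on the subgroup $\Inn(A[\Gamma])$ this restricts to the standard action of $A[\Gamma]$ on $D_\Gamma$. By Lemma~\ref{lem:ActionByIsometries}(3), every simplicial automorphism of $D_\Gamma$ is an isometry, so $\Aut_\Gamma(A[\Gamma])$ acts by isometries; invoking the hypothesis $\Aut(A[\Gamma]) = \Aut_\Gamma(A[\Gamma])$, the whole group $\Aut(A[\Gamma])$ acts by isometries on the Gromov-hyperbolic space $D_\Gamma$.

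The third fact is the heart of the argument: one must show that the limit set $\Lambda(A[\Gamma]) \subseteq \partial D_\Gamma$ of the inner action contains at least three points. This is exactly the geometric input supplied by the theory of large-type Artin groups acting on their Deligne complexes: by~\cite{Vasko1} the action of $A[\Gamma]$ on $D_\Gamma$ is acylindrical and non-elementary; in particular $A[\Gamma]$ contains two loxodromic elements with distinct pairs of endpoints in $\partial D_\Gamma$, which can be exhibited explicitly as suitable products of standard generators, so $\Lambda(A[\Gamma])$ is infinite. Granting all three facts, Corollary~\ref{cor:7} shows that $A[\Gamma]$ has property $R_\infty$. The main obstacle is precisely this non-elementarity of the inner action on $D_\Gamma$; everything else is a formal combination of the reductions of Section~\ref{sec2} with Definition~\ref{defi:actionofAut} and Lemma~\ref{lem:ActionByIsometries}.
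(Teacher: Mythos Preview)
Your proposal follows the same three-step skeleton as the paper's proof (trivial center, isometric $\Aut$-action on $D_\Gamma$, non-elementary $\Inn$-action, then Corollary~\ref{cor:7}), and the first two steps are handled essentially as in the paper. The difference lies entirely in the third step, which you correctly identify as the heart of the matter.

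The paper does not outsource non-elementarity to~\cite{Vasko1}; it builds it by hand. It produces two elliptic elements $x,y\in A[\Gamma]$ with disjoint fixed-point sets in $D_\Gamma$ (via a case split on whether $\Gamma$ is connected: centers $z_{ab},z_{ac}$ of dihedral parabolics in the connected case, two generators in distinct components otherwise), and then applies Martin's ping-pong result~\cite[Proposition~C]{Marti1} to obtain a rank~$2$ free subgroup containing independent loxodromics $x^ny^n$ and $y^nx^n$. Your appeal to~\cite{Vasko1} is imprecise: that paper proves that $2$-dimensional Artin groups are acylindrically hyperbolic, not that the specific action on $(D_\Gamma,\text{Moussong})$ is acylindrical and non-elementary. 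Acylindrical hyperbolicity guarantees \emph{some} non-elementary action on \emph{some} hyperbolic space, which is not what Corollary~\ref{cor:7} needs here, since the $\Aut$-action you have is on $D_\Gamma$. In the hyperbolic-type case one can indeed extract non-elementarity on $D_\Gamma$ from the ingredients in~\cite{Vasko1} (a loxodromic WPD element, plus $A[\Gamma]$ not virtually cyclic), but that extraction is an argument you have not given, and your sentence ``which can be exhibited explicitly as suitable products of standard generators'' promises exactly the construction the paper carries out but you do not. This is not a fatal error---the claim is true and close to the literature---but it is a genuine gap at the one non-formal point of the proof.
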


\begin{proof}
    If $\Gamma$ has $2$ vertices, then $A[\Gamma]$ is a dihedral Artin group, which has property $R_\infty$ by \cite[Theorem~1]{CalSor1}. So we assume that $\Gamma$ has at least $3$ vertices. 
    
    Since $A[\Gamma]$ is of large and hyperbolic type, the Deligne complex $D_{\Gamma}$ (endowed with the Moussong metric) is Gromov-hyperbolic by~\cite[Lemma~5]{Crisp1}. By Lemma \ref{lem:ActionByIsometries} we also know that $\Aut(A[\Gamma])$ acts on $D_{\Gamma}$ by isometries. Also note that the group $A[\Gamma]$ has trivial center. This fact can be distilled from the work of Godelle~\cite{Godel1}; an explicit proof can be found in ~\cite[Corollary~C]{Vasko1}.
    
    Next we are going to show that the action of $A[\Gamma]$ on $D_{\Gamma}$ is non-elementary. In order to do this, we choose two elements $x, y \in A[\Gamma]$ acting elliptically on $D_{\Gamma}$ with disjoint fixed-point sets. In what follows, we denote by $\Fix(g)$ the fixed-point set of an element $g$ acting on $D_{\Gamma}$. We split the argument in two cases.
        
    \underline{Case 1: $\Gamma$ is connected}. There are three standard generators $a, b, c \in V(\Gamma)$ satisfying $m_{ab}, m_{ac} < \infty$, and we pick $x=z_{ab}$ and $y=z_{ac}$, where $z_{st}$ denotes an element generating the center of $\langle s, t \rangle$. It follows from ~\cite[Lemma 8]{Crisp1} that the $\Fix(x)$ is a single vertex: it is the coset that corresponds to the standard parabolic subgroup generated by $a$ and $b$. The same applies to $y$ with the generators $a$ and $c$. In particular, $\Fix(x)$ and $\Fix(y)$ are disjoint.
        
    \underline{Case 2: $\Gamma$ is not connected}. Then there are two standard generators $a, b \in V(\Gamma)$ that lie in distinct connected components of $\Gamma$. It follows from ~\cite[Lemma 8]{Crisp1} that the fixed-point sets $\Fix(a)$ and $\Fix(b)$ are convex trees in $D_{\Gamma}$ (often called \emph{standard trees}). It is a standard result that $\Fix(a)$ and $\Fix(b)$ do not intersect in that case. Indeed, suppose that they do. By~\cite[Corollary~2.18]{HaMaSi1} $\Fix(a) \cap \Fix(b)$ must be a single vertex $v$. We consider the triangle $T$ with vertices $\langle a \rangle$, $\langle b \rangle$ and $v$. Let us denote by $K_{\Gamma}$ the fundamental domain of the action of $A[\Gamma]$ on $D_{\Gamma}$. Note that $K_{\Gamma}$ is convex, so the geodesic between $\langle a \rangle$ and $\langle b \rangle$ lies inside $K_{\Gamma}$. Because $a$ and $b$ lie in different connected components of $\Gamma$, the vertex corresponding to the coset $\{1\}$ disconnects $K_{\Gamma}$, and $\langle a \rangle$ and $\langle b \rangle$ lie in different connected components of $K_{\Gamma} \setminus \{1\}$. Consequently, $\{1\}$ lies on the geodesic from $\langle a \rangle$ to $\langle b \rangle$. It follows that
    \[
    \angle_{\langle a \rangle} \bigl(\langle b \rangle, v\bigr) = \angle_{\langle a \rangle} \bigl(\{1\}, v\bigr).
    \]
    The geodesic from $\langle a \rangle$ to $v$ is contained in the standard tree $\Fix(a)$, the above angle is exactly $\pi/2$, by construction. The same applies to the angle $\angle_{\langle b \rangle} \bigl(\langle a \rangle, v\bigr)$. Because $\Fix(a)$ and $\Fix(b)$ intersect at a single point, it also follows that $\angle_{v} \bigl(\langle a \rangle, \langle b \rangle\bigr) > 0$. Altogether, this shows that the sum of the angles in $T$ is strictly more than $\pi$, which contradicts ~\cite[Chapter II, Exercise 2.12 (1)]{BriHae1}. This contradiction shows that $\Fix(a)$ and $\Fix(b)$ are disjoint, and we can set $x=a$, $y=b$ in Case 2.

    So now we have constructed two elliptic elements $x,y\in A[\Gamma]$ with disjoint fixed-point sets.     By~\cite[Proposition~C]{Marti1}, there is an integer $n > 0$ such that $H=\langle x^n, y^n \rangle$ is a non-abelian free group, and, moreover, an element $g\in H$ acts loxodromically on $D_\Gamma$ if and only if $g$ is not conjugate in $A[\Gamma]$ to a power of $x^n$ or $y^n$. We can exhibit a pair of such elements in $H$, for example, $g=x^ny^n$ and $h=y^nx^n$, by~\cite[Remark~3.3]{Marti1}. Since $g$ and $h$ do not commute, they generate a rank $2$ free subgroup of $H$. 
    In particular, $\langle g,h\rangle$ is not virtually cyclic, hence it cannot be elementary.
This proves that the action of $A[\Gamma]$ on $D_\Gamma$ is non-elementary. 
    
    Now we apply Corollary \ref{cor:7} and conclude that $A[\Gamma]$ has property $R_{\infty}$.
\end{proof}

There has been recent progress on determining the automorphism group of Artin groups, and in particular on finding classes of Artin groups for which the automorphism group $\Aut(A[\Gamma])$ is as small as $\Aut_{\Gamma}(A[\Gamma])$. A first result is the following:

\begin{thm}[\protect{Vaskou~\cite[Theorem A]{Vasko2}}] \label{ThmAutForLTFOI}
    Let $A[\Gamma]$ be a large-type free-of-infinity Artin group of rank at least $3$. Then $\Aut(A[\Gamma]) = \Aut_{\Gamma}(A[\Gamma])$.\qed
\end{thm}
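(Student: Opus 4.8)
The plan is to prove, for an arbitrary $\varphi\in\Aut(A[\Gamma])$, that $\varphi$ is a composition of an inner automorphism, a graph-induced automorphism, and possibly the global inversion $\chi$. The geometric backbone is the action of $A[\Gamma]$ on the Deligne complex $D_\Gamma$ together with the dictionary between its combinatorics and parabolic subgroups: since $A[\Gamma]$ is large-type, the spherical standard parabolics have at most two generators, so $D_\Gamma$ is $2$-dimensional with three kinds of vertices --- the base-type ($0$-generator) cosets, the cyclic-parabolic cosets $g\langle s\rangle$, and the dihedral-parabolic cosets $g\langle s,t\rangle$ --- and we have already recorded that the central elements $z_{st}$ of the dihedral parabolics act elliptically on $D_\Gamma$ with a single fixed vertex, while the standard generators fix unbounded \emph{standard trees}. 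The crucial input is that this structure is $\varphi$-invariant: one shows that $\varphi$ permutes the set of conjugates of the dihedral parabolic subgroups $\langle s,t\rangle$, and hence also (by intersecting, using $\langle s,t\rangle\cap\langle s,u\rangle=\langle s\rangle$) the set of conjugates of the cyclic parabolics $\langle s\rangle$. Equivalently, one produces an automorphism-invariant, purely group-theoretic characterization of the elements $z_{st}$ and their roots up to conjugacy --- say, via the isomorphism type and the structure of their centralizers and normalizers --- which identifies the dihedral parabolics as the centralizers $C(z_{st})$ and lets one transport the poset underlying $D_\Gamma$ along $\varphi$.

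Granting this input, the reduction proceeds as follows; throughout $\Gamma$ is a complete labelled graph since $\mathrm{rank}\ge3$, so in particular connected. First, $\varphi$ sends $\langle s\rangle\cong\Z$ to a conjugate of some $\langle\pi(s)\rangle$, and as $\Z$ has exactly two generators, after composing with $\chi$ if necessary we may assume $\varphi(s)$ is conjugate to $\pi(s)$ for a bijection $\pi\colon S\to S$. Because $\langle s,t\rangle$ is the unique dihedral parabolic containing both $\langle s\rangle$ and $\langle t\rangle$, the subgroup $\varphi(\langle s,t\rangle)$ is conjugate to $\langle\pi(s),\pi(t)\rangle$; comparing isomorphism types of dihedral Artin groups forces $m_{\pi(s)\pi(t)}=m_{st}$, so $\pi$ extends to a graph-induced automorphism $\sigma$. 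Replacing $\varphi$ by $\sigma^{-1}\circ\varphi$, we may assume $\varphi$ fixes the conjugacy class of every standard generator, i.e.\ $\varphi(s)=g_s\,s\,g_s^{-1}$. It remains to show $\varphi$ is inner. For each edge $st$, $\varphi$ restricts to an isomorphism of $\langle s,t\rangle$ onto a conjugate of itself that fixes the conjugacy classes of $s$ and of $t$ inside that dihedral subgroup; by the known description of $\Aut$ of a dihedral Artin group, such a map is a conjugation, so $\varphi|_{\langle s,t\rangle}=\conj_{k_{st}}$ for some $k_{st}\in A[\Gamma]$, with the ambiguity in $k_{st}$ governed by $C(\langle s,t\rangle)=\langle z_{st}\rangle$ and by $C(s)$, $C(t)$. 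Matching these conjugations along the overlaps $\langle s\rangle=\langle s,t\rangle\cap\langle s,u\rangle$ and using the known structure of the centralizers $C(s)$ of standard generators, one propagates the choices around the connected graph $\Gamma$ to a single element $k$ with $\varphi=\conj_k$, so that the reduced $\varphi$ is inner and the original one lies in $\Aut_\Gamma(A[\Gamma])$.

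The main obstacle is the first input: showing that every automorphism preserves the family of conjugates of dihedral parabolic subgroups, equivalently characterizing the $z_{st}$ up to conjugacy in a way visibly invariant under all of $\Aut(A[\Gamma])$. This is exactly where the two hypotheses are used. Being \emph{free-of-infinity} eliminates the pairs with $m_{st}=\infty$, where $\langle s,t\rangle$ is a free group and the notion of a ``dihedral parabolic'' loses its rigidity; and \emph{rank at least $3$} guarantees that each cyclic parabolic $\langle s\rangle$ arises as the intersection of two distinct dihedral parabolics, so that the poset of spherical parabolics --- hence all of $D_\Gamma$ --- can be reconstructed intrinsically from the group. The proof of this input draws on the structure theory of parabolic subgroups of large-type Artin groups: the intersection and normalizer behaviour of parabolics, the triviality of the center of $A[\Gamma]$, and the classification of elliptic versus loxodromic elements for the action on $D_\Gamma$; one isolates the dihedral parabolics, up to conjugacy, as the maximal ``dihedral-Artin-like'' subgroups occurring as the centralizer of their own center, and checks that no automorphism can destroy this description. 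Once that is secured, the combinatorial and bookkeeping steps above are essentially formal.
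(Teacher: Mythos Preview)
The paper does not prove this theorem: it is quoted verbatim from \cite{Vasko2} and marked with a \qed, i.e.\ it is used as a black box. There is therefore no proof in the present paper to compare your proposal against.

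As for your outline itself, the broad strategy --- show that an arbitrary automorphism permutes the conjugacy classes of dihedral parabolics, hence of cyclic parabolics, extract a label-preserving permutation of $S$ up to~$\chi$, and then reduce to an inner automorphism by matching conjugators along the edges of~$\Gamma$ --- is indeed the shape of the argument in \cite{Vasko2}. But you correctly identify the crux as the ``first input'' and then do not prove it: the sentence about isolating dihedral parabolics ``as the maximal dihedral-Artin-like subgroups occurring as the centralizer of their own center'' is a plausibility claim, not an argument, and it is precisely here that the serious work in \cite{Vasko2} lies (rigidity of the Deligne complex, or an invariant characterization of the set of parabolic subgroups). The subsequent bookkeeping step --- propagating the edge-by-edge conjugators $k_{st}$ to a single global $k$ --- is also not automatic: one has to control the ambiguities in $k_{st}$ coming from $\langle z_{st}\rangle$ and show there is no cocycle obstruction when walking around cycles of~$\Gamma$; this again uses the free-of-infinity hypothesis in a nontrivial way. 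So your proposal is a reasonable roadmap, but as written it defers the two genuinely hard steps rather than carrying them out.
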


In particular, the known property $R_\infty$ for dihedral Artin groups~\cite[Theorem~1]{CalSor1} and Theorem \ref{Thm:SmallAutImpliesRinfty} yield the following:

\begin{corl}\label{cor:ltfoiht} (=Theorem~\ref{thm:2})
    Let $A[\Gamma]$ be a free-of-infinity Artin group of large and hyperbolic type. Then $A[\Gamma]$ has property $R_{\infty}$.\qed
\end{corl}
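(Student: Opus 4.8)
The plan is to split on the number of vertices of $\Gamma$ and, in each case, reduce directly to a result already available in this section. So I do not expect to prove anything new here — the corollary is meant to be a formal consequence of Theorem~\ref{Thm:SmallAutImpliesRinfty} and of the known automorphism group for this class of Artin groups.

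First, if $|V(\Gamma)|=2$ then, being large-type and free-of-infinity, $A[\Gamma]$ is a dihedral Artin group $A[I_2(m)]$ with $3\le m<\infty$ (and automatically of hyperbolic-type, since $W[I_2(m)]$ is finite). Such groups have property $R_\infty$ by~\cite[Theorem~1]{CalSor1}, so this case is done. (As is customary, one tacitly assumes $|V(\Gamma)|\ge 2$; the rank-one group $A[\Gamma]\cong\Z$ does not have $R_\infty$ and is not among the classes under consideration.)

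Second, if $|V(\Gamma)|\ge 3$, I would invoke Vaskou's Theorem~\ref{ThmAutForLTFOI}: for a large-type free-of-infinity Artin group of rank at least $3$ one has $\Aut(A[\Gamma])=\Aut_{\Gamma}(A[\Gamma])$. Since $A[\Gamma]$ is assumed of hyperbolic-type, the hypotheses of Theorem~\ref{Thm:SmallAutImpliesRinfty} are met, and that theorem yields property $R_\infty$ immediately. Combining the two cases proves the corollary.

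I do not foresee a genuine obstacle at this final step: all the substance has been pushed into the two inputs. Theorem~\ref{Thm:SmallAutImpliesRinfty} is where the real work sits — its proof builds the non-elementary isometric action of $\Aut(A[\Gamma])$ on the Gromov-hyperbolic Deligne complex $D_\Gamma$ via Lemma~\ref{lem:ActionByIsometries}, uses the triviality of $Z(A[\Gamma])$, and produces two elliptic elements of $A[\Gamma]$ with disjoint fixed-point sets (splitting on whether $\Gamma$ is connected) to obtain independent loxodromics, so that Corollary~\ref{cor:7} applies. Theorem~\ref{ThmAutForLTFOI} is where the other half sits, identifying $\Aut(A[\Gamma])$ with the ``obvious'' automorphisms $\Aut_\Gamma(A[\Gamma])$. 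The only point that needs minimal care is the rank bookkeeping: Theorem~\ref{ThmAutForLTFOI} is stated for rank $\ge 3$, so the rank-two case genuinely cannot be folded in and must be handled separately by the dihedral case of~\cite{CalSor1}.
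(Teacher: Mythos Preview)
Your proposal is correct and matches the paper's own argument essentially verbatim: the paper derives the corollary in one sentence by citing the dihedral case from~\cite[Theorem~1]{CalSor1} for rank $2$ and combining Theorem~\ref{ThmAutForLTFOI} with Theorem~\ref{Thm:SmallAutImpliesRinfty} for rank $\ge 3$. Your additional recap of the ingredients inside Theorem~\ref{Thm:SmallAutImpliesRinfty} is accurate but not needed for the corollary itself.
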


Theorem \ref{ThmAutForLTFOI} was recently extended to allow for some graphs that are not necessarily free-of-infinity. We introduce a few relevant definitions.

    An Artin group $A[\Gamma]$ is said to be of \emph{XXXL type} if every coefficient $m_{st}$ is $\ge 6$.
     A graph $\Gamma$ is called \emph{twistless} if it is connected and has no cut-vertex and no separating edge. (We call an edge $e$ between two vertices $s,t$ of a connected graph $\Gamma$ \emph{separating} if the graph $\Gamma\setminus\{s,e,t\}$ obtained from $\Gamma$ by removing the edge $e$ and both vertices $s,t$ incident to it, is disconnected.)

\begin{thm}[\protect{Blufstein--Martin--Vaskou~\cite[Corollary~1.7]{BlMaVa1}}]\label{thm:xxxl}
Let $A[\Gamma]$ be an Artin group of XXXL-type with $\Gamma$ a twistless graph of rank at least $3$. Then $\Aut(A[\Gamma]) = \Aut_{\Gamma}(A[\Gamma])$.\qed
\end{thm}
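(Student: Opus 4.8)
Since the statement repackages \cite[Proposition~5.2 and Corollary~6.10]{BlMaVa1}, I will only outline the argument. The goal is to show that every $\varphi\in\Aut(A[\Gamma])$ is a product of an inner automorphism, a graph-induced automorphism, and possibly the global inversion $\chi$. The case $|V(\Gamma)|=2$ (a dihedral Artin group) is classical, so we may assume $\Gamma$ has at least three vertices; then $A[\Gamma]$ is non-spherical with $Z(A[\Gamma])=1$ by \cite[Corollary~C]{Vasko1}. The plan is to track $\varphi$ through a chain of increasingly fine $\varphi$-invariant data: the conjugacy classes of the rank-$2$ (dihedral) parabolic subgroups $A_{st}=\langle s,t\rangle$ for $st\in E(\Gamma)$; the graph $\Gamma$ itself; the conjugacy classes of the standard generators; and finally to reduce to showing that an automorphism fixing every generator up to conjugacy must be inner.

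The crucial step is to prove that $\varphi$ permutes the conjugacy classes $\{[A_{st}]\}$, and this is where both hypotheses are used. Since $A[\Gamma]$ is large-type and hyperbolic-type, $D_\Gamma$ is Gromov-hyperbolic (as in the proof of Theorem~\ref{Thm:SmallAutImpliesRinfty}); since it is moreover of XXXL-type, the link of every type-$2$ vertex of $D_\Gamma$ has systole at least $24$ --- it equals $4m$ where $m\ge6$ is the corresponding edge label (compare the computation in the proof of Lemma~\ref{lem:ActionByIsometries}) --- so the combinatorics of $D_\Gamma$ is especially rigid: the generator $z_{st}$ of $Z(A_{st})$ has fixed-point set a single type-$2$ vertex, standard generators fix standard trees, and the intersections of standard trees are tightly controlled. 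From this one distills a purely group-theoretic, conjugacy-invariant characterization of the family of rank-$2$ parabolic subgroups $\{A_{st}\}$ --- roughly, each $A_{st}$ is the stabilizer of a vertex of $D_\Gamma$ recognizable through the size and shape of the fixed-point sets of its non-trivial central elements --- which is therefore preserved by $\varphi$.

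Next, two dihedral parabolics can be conjugated to intersect in a rank-$1$ parabolic exactly when the corresponding edges of $\Gamma$ share a vertex, so the classes $[A_{st}]$ together with this incidence pattern recover $\Gamma$. Here \emph{twistlessness} is essential and not merely cosmetic: a cut-vertex or a separating edge of $\Gamma$ would give rise to genuine partial-conjugation (``twist'') automorphisms lying outside $\Aut_\Gamma(A[\Gamma])$, and excluding those configurations is precisely what makes the reconstruction of $\Gamma$ rigid up to a label-preserving graph automorphism. After composing $\varphi$ with that graph automorphism we may assume $\varphi(s)$ is conjugate to $s^{\varepsilon_s}$ for every vertex $s$, with $\varepsilon_s\in\{\pm1\}$, since inside a dihedral Artin group of label $m\ge6$ the standard generators are determined up to conjugacy and inversion (for instance, via the Garside structure). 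Reading the defining relation across an edge $st$ inside the rank-$2$ parabolic $\varphi(A_{st})$ then forces $\varepsilon_s=\varepsilon_t$, and connectedness of $\Gamma$ makes $\varepsilon$ constant; composing with $\chi$ if $\varepsilon=-1$, we reduce to $\varphi(s)=g_s\,s\,g_s^{-1}$ for all $s$. Finally, for each edge $st$ the identity $\Pi(g_s\,s\,g_s^{-1},\,g_t\,t\,g_t^{-1},\,m_{st})=\Pi(g_t\,t\,g_t^{-1},\,g_s\,s\,g_s^{-1},\,m_{st})$, analysed inside $A_{st}$ via malnormality-type properties of parabolic subgroups, forces $g_s^{-1}g_t\in A_{st}$ and determines it modulo $Z(A_{st})$; propagating this cocycle-type constraint over the connected graph $\Gamma$ and using $Z(A[\Gamma])=1$ forces all the $g_s$ to coincide with a single element $g$, whence $\varphi=\conj_g$ is inner.

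The main obstacle is the crucial step above: extracting a conjugacy-invariant, purely group-theoretic characterization of the dihedral parabolic subgroups $A_{st}$. This is precisely where the XXXL systole bound and the fine geometry of the Deligne complex do the real work, and it is the technical heart of \cite{BlMaVa1}. A secondary difficulty lies in the concluding argument, which rests on malnormality and intersection properties of parabolic subgroups of large-type Artin groups --- themselves non-trivial results --- in order to control the conjugators $g_s$.
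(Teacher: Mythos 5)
This statement is not proved in the paper at all: Theorem~\ref{thm:xxxl} is imported verbatim from Blufstein--Martin--Vaskou~\cite{BlMaVa1}, and the \texttt{\textbackslash qed} immediately after the statement signals that no proof is being offered here. So there is no in-paper argument to compare your sketch against. What you have written is a plausible and well-informed reconstruction of how proofs of this kind of automorphism-rigidity result for large-type Artin groups are organized (via the Deligne complex and its CAT(0)/Gromov-hyperbolic geometry, fixed-point sets of central elements of dihedral parabolics, characterization of the family $\{A_{st}\}$ up to conjugacy, reconstruction of $\Gamma$ from the incidence pattern of those classes, reduction modulo graph automorphisms and the global inversion $\chi$, and a final cocycle argument using malnormality-type properties of parabolics to force innerness). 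You correctly flag that the heart of the matter --- a conjugacy-invariant, purely group-theoretic recognition of the dihedral parabolics, for which the XXXL systole bound supplies the needed rigidity --- and the role of twistlessness in excluding Dehn-twist-like automorphisms, are exactly where the real technical work of~\cite{BlMaVa1} resides, and you are right that these cannot be reproduced from first principles in a short sketch. In short: nothing to compare, and your assessment of what would need to be done, and why, is sound. The one caveat worth stating explicitly, since the theorem's hypotheses do not mention it, is that the hyperbolicity of $D_\Gamma$ you invoke is automatic here: XXXL ($m_{st}\ge6$) implies large-type and excludes $(3,3,3)$-triangles, hence $W[\Gamma]$ is word-hyperbolic, so the Crisp criterion applies.
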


We immediately obtain the following corollary.

\begin{corl}\label{cor:xxxl}
    (=Theorem~\ref{thm:3}, part 1)
    Let $A[\Gamma]$ be an Artin group of XXXL-type with $\Gamma$ a  twistless graph. Then $A[\Gamma]$ has property $R_{\infty}$.\qed
\end{corl}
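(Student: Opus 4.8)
The plan is to chain the two results already in hand: Theorem~\ref{thm:xxxl}, which pins down the automorphism group in the XXXL twistless case, and Theorem~\ref{Thm:SmallAutImpliesRinfty}, which converts the equality $\Aut(A[\Gamma])=\Aut_{\Gamma}(A[\Gamma])$ into property $R_\infty$. No new geometry is needed, since the action on the Deligne complex and the non-elementarity argument are already packaged inside Theorem~\ref{Thm:SmallAutImpliesRinfty}.

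First I would check that the hypotheses of Theorem~\ref{Thm:SmallAutImpliesRinfty} are met. An Artin group of XXXL type is in particular of large type, because every label satisfies $m_{st}\ge 6\ge 3$. It is moreover of hyperbolic type: for a large-type Artin group this is equivalent to the absence of a triangle in $\Gamma$ all of whose edge labels equal $3$, and no such triangle can occur once every label is at least $6$. I would also dispose of the degenerate cases: if $\Gamma$ has at most two vertices, then $A[\Gamma]$ is either infinite cyclic (excluded by the standing assumption of this section that $|V(\Gamma)|\ge 3$) or a dihedral Artin group, in which case property $R_\infty$ holds by \cite[Theorem~1]{CalSor1}. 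So one may assume $|V(\Gamma)|\ge 3$.

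Then Theorem~\ref{thm:xxxl} applies directly: since $A[\Gamma]$ is of XXXL type and $\Gamma$ is twistless, $\Aut(A[\Gamma])=\Aut_{\Gamma}(A[\Gamma])$. Feeding this into Theorem~\ref{Thm:SmallAutImpliesRinfty} — $A[\Gamma]$ is large-type, hyperbolic-type, and satisfies $\Aut(A[\Gamma])=\Aut_{\Gamma}(A[\Gamma])$ — yields that $A[\Gamma]$ has property $R_\infty$. I do not expect any genuine obstacle here: the only thing to be mildly careful about is that ``twistless'' is part of the connectedness picture, so it is Case~1 (connected $\Gamma$) in the proof of Theorem~\ref{Thm:SmallAutImpliesRinfty} that is relevant; all of the substantive work is hidden inside the two cited theorems.
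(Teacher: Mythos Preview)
Your argument is correct and is exactly what the paper intends: the corollary is stated with a \qed and the paper simply says ``We immediately obtain the following corollary,'' i.e.\ combine Theorem~\ref{thm:xxxl} with Theorem~\ref{Thm:SmallAutImpliesRinfty} after noting that XXXL implies large-type and hyperbolic-type. Your remarks on the low-rank cases and on connectedness of a twistless graph are accurate but not needed beyond what is already absorbed into those two theorems.
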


We now come to the second extension of Theorem \ref{ThmAutForLTFOI}.

An \emph{admissible decomposition} of a graph $\Gamma$ is a pair of induced subgraphs $\Gamma_1$ and $\Gamma_2$ of $\Gamma$ such that $\Gamma = \Gamma_1 \cup \Gamma_2$ (in the sense that $V(\Gamma)=V(\Gamma_1)\cup V(\Gamma_2)$ and $E(\Gamma)=E(\Gamma_1)\cup E(\Gamma_2)$ for the respective sets of vertices and edges). We say that an admissible decomposition is \emph{twistless} if $\Gamma_1 \cap \Gamma_2$ is not empty, and is not a single vertex nor a single edge. Notice that a graph $\Gamma$ is twistless 
if and only if every admissible decomposition of $\Gamma$ is twistless.
    
    We say that a graph $\Gamma$ is a \emph{twistless star} if $\Gamma$ is the star of a vertex $v \in \Gamma$ and $\Gamma$ is twistless. Note that complete graphs are  examples of twistless stars. 
    
    We say that $\Gamma$ has a \emph{twistless hierarchy terminating in twistless stars} if it is possible to start from $\Gamma$ and perform finitely many successive admissible decompositions until we end up with a collection of graphs that are twistless stars. As examples of such graphs one can take $1$-skeleta of triangulations of surfaces, see~\cite[Remark~5.12]{HuOsVa1}.

In Figure~\ref{fig:twistless} the graph $\Gamma_1$ is a twistless star and the graph $\Gamma=\Gamma_1\cup\Gamma_2$, which is the $1$-skeleton of an octahedron, is an example of a twistless hierarchy terminating in twistless stars, since it is obtained by identifying two copies of $\Gamma_1$ over a $4$-cycle.

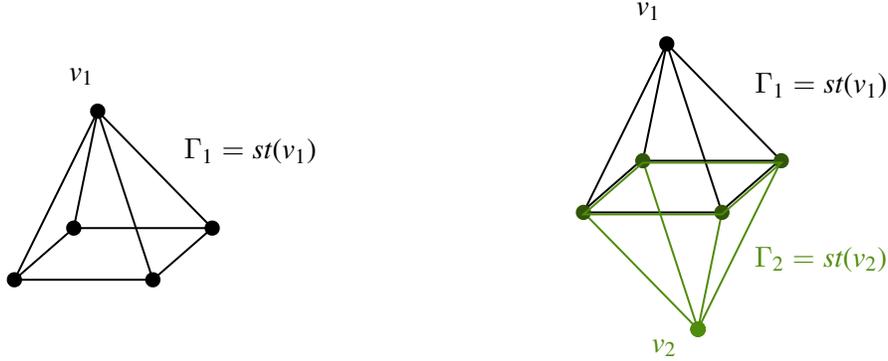
\begin{figure}
\begin{center}
\tikzset{every picture/.style={line width=0.75pt}} 

\begin{tikzpicture}[x=0.75pt,y=0.75pt,yscale=-1,xscale=1,rotate=5]

\draw   (47.93,113) -- (117.76,113) -- (87.83,139.08) -- (18,139.08) -- cycle ;
\draw    (60,54) -- (18,139.08) ;
\draw [shift={(18,139.08)}, rotate = 116.27] [color=black  ][fill=black  ][line width=0.75]      (0, 0) circle [x radius= 3.35, y radius= 3.35]   ;
\draw    (60,54) -- (47.93,113) ;
\draw [shift={(47.93,113)}, rotate = 101.56] [color=black  ][fill=black  ][line width=0.75]      (0, 0) circle [x radius= 3.35, y radius= 3.35]   ;
\draw    (60,54) -- (87.83,139.08) ;
\draw [shift={(87.83,139.08)}, rotate = 71.89] [color=black  ][fill=black  ][line width=0.75]      (0, 0) circle [x radius= 3.35, y radius= 3.35]   ;
\draw [shift={(60,54)}, rotate = 71.89] [color=black  ][fill=black  ][line width=0.75]      (0, 0) circle [x radius= 3.35, y radius= 3.35]   ;
\draw    (60,54) -- (117.76,113) ;
\draw [shift={(117.76,113)}, rotate = 45.61] [color=black  ][fill=black  ][line width=0.75]      (0, 0) circle [x radius= 3.35, y radius= 3.35]   ;
\draw  [color=black  ,draw opacity=1 ] (333.93,79) -- (403.76,79) -- (373.83,105.08) -- (304,105.08) -- cycle ;
\draw    (347,20) -- (305,105.08) ;
\draw    (347,20) -- (334.93,79) ;
\draw    (347,20) -- (374.83,105.08) ;
\draw [shift={(347,20)}, rotate = 71.89] [color=black  ][fill=black  ][line width=0.75]      (0, 0) circle [x radius= 3.35, y radius= 3.35]   ;
\draw    (347,20) -- (404.76,79) ;
\draw [color=blue  ,draw opacity=1 ]   (334.93,79) -- (362.76,164.08) ;
\draw [shift={(362.76,164.08)}, rotate = 71.89] [color=blue  ,draw opacity=1 ][fill=blue  ,fill opacity=1 ][line width=0.75]      (0, 0) circle [x radius= 3.35, y radius= 3.35]   ;
\draw [color=blue  ,draw opacity=1 ]   (305,105.08) -- (362.76,164.08) ;
\draw [shift={(362.76,164.08)}, rotate = 45.61] [color=blue  ,draw opacity=1 ][fill=blue  ,fill opacity=1 ][line width=0.75]      (0, 0) circle [x radius= 3.35, y radius= 3.35]   ;
\draw [color=blue  ,draw opacity=1 ]   (404.76,79) -- (362.76,164.08) ;
\draw [shift={(362.76,164.08)}, rotate = 116.27] [color=blue  ,draw opacity=1 ][fill=blue  ,fill opacity=1 ][line width=0.75]      (0, 0) circle [x radius= 3.35, y radius= 3.35]   ;
\draw [color=blue  ,draw opacity=1 ]   (374.83,105.08) -- (362.76,164.08) ;
\draw [shift={(362.76,164.08)}, rotate = 101.56] [color=blue  ,draw opacity=1 ][fill=blue  ,fill opacity=1 ][line width=0.75]      (0, 0) circle [x radius= 3.35, y radius= 3.35]   ;
\draw [color={rgb, 255:red,0; green,0; blue, 128 }  ,draw opacity=1 ]   (305,105.08) ;
\draw [shift={(305,105.08)}, rotate = 0] [color={rgb, 255:red,0; green,0; blue, 128 }  ,draw opacity=1 ][fill={rgb, 255:red,0; green,0; blue, 128 }  ,fill opacity=1 ][line width=0.75]      (0, 0) circle [x radius= 3.35, y radius= 3.35]   ;
\draw [color={rgb, 255:red,0; green,0; blue, 128 }  ,draw opacity=1 ]   (334.93,79) ;
\draw [shift={(334.93,79)}, rotate = 0] [color={rgb, 255:red,0; green,0; blue, 128 }  ,draw opacity=1 ][fill={rgb, 255:red,0; green,0; blue, 128 }  ,fill opacity=1 ][line width=0.75]      (0, 0) circle [x radius= 3.35, y radius= 3.35]   ;
\draw [color={rgb, 255:red,0; green,0; blue, 128 }  ,draw opacity=1 ]   (404.76,79) ;
\draw [shift={(404.76,79)}, rotate = 0] [color={rgb, 255:red,0; green,0; blue, 128 }  ,draw opacity=1 ][fill={rgb, 255:red,0; green,0; blue, 128 }  ,fill opacity=1 ][line width=0.75]      (0, 0) circle [x radius= 3.35, y radius= 3.35]   ;
\draw [color={rgb, 255:red,0; green,0; blue, 128 }  ,draw opacity=1 ]   (374.83,105.08) ;
\draw [shift={(374.83,105.08)}, rotate = 0] [color={rgb, 255:red,0; green,0; blue, 128 }  ,draw opacity=1 ][fill={rgb, 255:red,0; green,0; blue, 128 }  ,fill opacity=1 ][line width=0.75]      (0, 0) circle [x radius= 3.35, y radius= 3.35]   ;
\draw  [color=blue  ,draw opacity=1 ] (334.93,80) -- (404.76,80) -- (374.83,106.08) -- (305,106.08) -- cycle ;

\draw (101,66.4) node [anchor=north west][inner sep=0.75pt]  [font=\normalsize]  {$\Gamma _{1} =st( v_{1})$};
\draw (388.76,120.94) node [anchor=north west][inner sep=0.75pt]  [color=blue  ,opacity=1 ]  {$\Gamma _{2} =st( v_{2})$};
\draw (43,30.4) node [anchor=north west][inner sep=0.75pt]    {$v_{1}$};
\draw (329,-2.6) node [anchor=north west][inner sep=0.75pt]    {$v_{1}$};
\draw (337,167.4) node [anchor=north west][inner sep=0.75pt]  [color=blue  ,opacity=1 ]  {$v_{2}$};
\draw (389,33.4) node [anchor=north west][inner sep=0.75pt]    {$\Gamma _{1} =st( v_{1})$};

\end{tikzpicture}
\caption{The graph $\Gamma_1$ is a twistless star, and $\Gamma=\Gamma_1\cup\Gamma_2$ is a twistless hierarchy terminating in twistless stars (here $st(v_i)$ denotes the star of vertex $v_i$).
\label{fig:twistless}}
\end{center}
\end{figure}

\begin{thm}[\protect{Huang--Osajda--Vaskou~\cite[Theorem~1.7]{HuOsVa1}}]
    Let $A[\Gamma]$ be a large-type Artin group and suppose that $\Gamma$ admits a twistless hierarchy terminating in twistless stars. Then $\Aut(A[\Gamma]) = \Aut_{\Gamma}(A[\Gamma])$.\qed
\end{thm}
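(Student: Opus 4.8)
The plan is to argue by induction on the length of a twistless hierarchy of $\Gamma$, taking the twistless stars as base cases. Throughout one uses the action of $A[\Gamma]$ on its Deligne complex $D_\Gamma$ --- which is $2$-dimensional and CAT(0) for the Moussong metric by Charney--Davis --- together with the combinatorial rigidity of its vertices already recorded in Lemma~\ref{lem:ActionByIsometries}: the three vertex types (cosets of the trivial subgroup, of rank-$1$ standard parabolics, and of dihedral standard parabolics) are distinguished purely by finiteness, boundedness and the systole of their links, and the standard trees $\Fix(s)$ are convex. The architecture is the usual one for results of this kind: given $\varphi\in\Aut(A[\Gamma])$, one first composes it with a graph automorphism, and with the global inversion $\chi$ if necessary, so as to arrange that $\varphi$ preserves the relevant parabolic data, and then shows that such a $\varphi$ must be inner.

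For the base case, let $\Gamma$ be a twistless star $st(v)$. If $\Gamma$ is complete then $A[\Gamma]$ is large-type free-of-infinity and the conclusion is Theorem~\ref{ThmAutForLTFOI}, so assume $\Gamma$ is not complete. First one shows that $\varphi$ permutes the conjugacy classes of rank-$1$ and of dihedral standard parabolics compatibly --- recognizing these subgroups inside $A[\Gamma]$ via their centralizers and the links of the corresponding vertices of $D_\Gamma$ (Lemma~\ref{lem:ActionByIsometries}) --- so that $\varphi$ induces a label-preserving automorphism of $\Gamma$; composing with a graph automorphism and with $\chi$ if necessary, we may then assume $\varphi$ fixes every $\langle s\rangle$ up to conjugacy, say $\varphi(s)=g_s s g_s^{-1}$. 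The twistless hypothesis --- $\Gamma$ connected with no cut-vertex and no separating edge --- is exactly what allows the standard coherence argument to replace the family $\{g_s\}$ by a single conjugator, so that after one further inner automorphism $\varphi=\id$, whence $\Aut(A[\Gamma])=\Aut_\Gamma(A[\Gamma])$.

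For the inductive step, write $\Gamma=\Gamma_1\cup\Gamma_2$ for the first admissible decomposition of the hierarchy, so $\Gamma_0=\Gamma_1\cap\Gamma_2$ is a full subgraph that is nonempty and is neither a single vertex nor a single edge. Since $E(\Gamma)=E(\Gamma_1)\cup E(\Gamma_2)$, no edge joins $V(\Gamma_1)\setminus V(\Gamma_0)$ to $V(\Gamma_2)\setminus V(\Gamma_0)$, so the Artin presentation realizes $A[\Gamma]$ as the pushout of $A[\Gamma_1]\leftarrow A[\Gamma_0]\to A[\Gamma_2]$; by~\cite{Lek1} the maps $A[\Gamma_0]\hookrightarrow A[\Gamma_i]$ are injective, hence $A[\Gamma]=A[\Gamma_1]\ast_{A[\Gamma_0]}A[\Gamma_2]$ and $A[\Gamma]$ acts on the associated Bass--Serre tree $T$. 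Each $\Gamma_i$ carries a shorter twistless hierarchy terminating in twistless stars, so $\Aut(A[\Gamma_i])=\Aut_{\Gamma_i}(A[\Gamma_i])$ by induction. The heart of the proof is to show that, after composing $\varphi$ with an element of $\Aut_\Gamma(A[\Gamma])$, the vertex groups $A[\Gamma_1]$ and $A[\Gamma_2]$ are preserved up to conjugacy, i.e.\ that $\varphi$ preserves the splitting; granting this, $\varphi$ acts on $T$, and since the edge group $A[\Gamma_0]$ is a large parabolic with trivial center, its fixed-point set in $D_\Gamma$ and its normalizer in $A[\Gamma]$ are controlled well enough to supply the acylindricity needed to pin $\varphi$ down to fixing both vertex stabilizers up to inner automorphisms. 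Then $\varphi$ restricts to automorphisms $\varphi_i\in\Aut_{\Gamma_i}(A[\Gamma_i])$ agreeing on $A[\Gamma_0]$, and coherence on this overlap forces $\varphi_1$ and $\varphi_2$ to be restrictions of a single graph automorphism composed with $\chi$ and an inner automorphism, so $\varphi\in\Aut_\Gamma(A[\Gamma])$.

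The main obstacle is the step just isolated: showing that no automorphism can ``mix'' $A[\Gamma_1]$ with $A[\Gamma_2]$, equivalently that $\Aut(A[\Gamma])$ preserves the lattice of conjugates of standard parabolic subgroups. This is where twistlessness is indispensable --- a cut-vertex or a separating edge would support a Dehn-twist-type automorphism that visibly destroys any splitting along it --- and it is also what keeps the overlap $A[\Gamma_0]$ large enough for the Deligne-complex machinery (recognizing parabolics via Lemma~\ref{lem:ActionByIsometries}-type invariants, convexity of standard trees, control of normalizers) to apply. Concretely I would prove invariance of the parabolic lattice by transporting the fixed-point sets of dihedral generators $z_{st}$ (as used in the proof of Theorem~\ref{Thm:SmallAutImpliesRinfty}) along $\varphi$ and matching them against the vertex-type invariants; once this invariance is established, constructing the $\Aut(A[\Gamma])$-action on $D_\Gamma$ and hence on $T$, and then running the gluing argument above, is essentially bookkeeping.
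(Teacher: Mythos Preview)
This theorem is not proved in the paper: it is stated with attribution to Huang--Osajda--Vaskou~\cite[Theorem~1.7]{HuOsVa1} and closed with a \qed, exactly as the other imported results (Theorems~\ref{ThmAutForLTFOI} and~\ref{thm:xxxl}) are. There is therefore no proof in the present paper to compare your proposal against; the paper merely quotes the statement and then deduces Corollary~\ref{cor:hierarchy} from it together with Theorem~\ref{Thm:SmallAutImpliesRinfty}.

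As for the proposal itself, the overall architecture you describe---induction along the hierarchy, an amalgamated-product splitting $A[\Gamma]=A[\Gamma_1]\ast_{A[\Gamma_0]}A[\Gamma_2]$ with action on the Bass--Serre tree, and a reduction to invariance of the parabolic lattice---is the natural shape for such a result. But you have correctly identified, and then not actually supplied, the main content: the assertion that every automorphism preserves the conjugacy classes of standard parabolic subgroups (equivalently, acts on $D_\Gamma$ or on the splitting). Your paragraph ``Concretely I would prove invariance of the parabolic lattice by transporting the fixed-point sets of dihedral generators $z_{st}$\dots'' is a promissory note, not an argument; the link invariants of Lemma~\ref{lem:ActionByIsometries} distinguish vertex \emph{types} in $D_\Gamma$ once one already knows that $\varphi$ induces a simplicial map of $D_\Gamma$, but they do not by themselves produce such a map from an abstract automorphism of $A[\Gamma]$. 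Likewise, in your base case for non-complete twistless stars you assert that $\varphi$ ``permutes the conjugacy classes of rank-$1$ and of dihedral standard parabolics'' via centralizer/link arguments, but this step is precisely what requires the substantial analysis carried out in~\cite{HuOsVa1}; it is not a consequence of anything established in the present paper. So while your outline is plausible, it should be read as a strategy rather than a proof.
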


\begin{corl}\label{cor:hierarchy} (=Theorem~\ref{thm:3}, part 2)     Let $A[\Gamma]$ be an Artin group of large and hyperbolic type, and suppose that $\Gamma$ admits a twistless hierarchy terminating in twistless stars. Then $A[\Gamma]$ has property $R_{\infty}$.\qed
\end{corl}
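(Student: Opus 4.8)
The plan is to obtain this as an immediate consequence of Theorem~\ref{Thm:SmallAutImpliesRinfty} together with the Huang--Osajda--Vaskou theorem stated just above. First I would invoke that theorem: since by hypothesis $A[\Gamma]$ is large-type and $\Gamma$ admits a twistless hierarchy terminating in twistless stars, it yields $\Aut(A[\Gamma]) = \Aut_{\Gamma}(A[\Gamma])$. Then I would feed this into Theorem~\ref{Thm:SmallAutImpliesRinfty}, whose hypotheses --- large-type, hyperbolic-type, and $\Aut(A[\Gamma]) = \Aut_{\Gamma}(A[\Gamma])$ --- are now all satisfied; its conclusion is precisely that $A[\Gamma]$ has property $R_{\infty}$. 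In short, the corollary is just the concatenation of the two cited inputs, and there is no independent content to grind through.

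The one point that merits a sentence of care is that the two standing assumptions are mutually compatible, i.e.\ the statement is not vacuous. Admitting a twistless hierarchy terminating in twistless stars is a purely combinatorial condition on the underlying graph $\Gamma$ that places no constraint on the edge labels $m_{st}$ beyond large-type, whereas hyperbolic-type for a large-type Artin group merely forbids $(3,3,3)$-labeled triangles in $\Gamma$; so, for instance, equipping the $1$-skeleton of a triangulation of a surface with all labels chosen $\ge 4$ produces genuine examples covered by the corollary.

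Accordingly, I do not expect any real obstacle in this corollary: the substantive work has already been carried out upstream. The geometric heart --- producing a non-elementary action of $A[\Gamma]$ on the Gromov-hyperbolic Moussong-metrized Deligne complex $D_{\Gamma}$ (using hyperbolicity of $D_{\Gamma}$ from Crisp's result, the elliptic/loxodromic ping-pong machinery of Martin, triviality of the center of $A[\Gamma]$, and Corollary~\ref{cor:7}) --- lives in the proof of Theorem~\ref{Thm:SmallAutImpliesRinfty}, and the identification $\Aut(A[\Gamma]) = \Aut_{\Gamma}(A[\Gamma])$ lives in the cited work of Huang--Osajda--Vaskou; only the bookkeeping of matching hypotheses remains.
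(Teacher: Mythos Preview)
Your proposal is correct and matches the paper's approach exactly: the corollary is stated with a \qed and no written proof, being an immediate consequence of the Huang--Osajda--Vaskou theorem (giving $\Aut(A[\Gamma]) = \Aut_{\Gamma}(A[\Gamma])$) combined with Theorem~\ref{Thm:SmallAutImpliesRinfty}. Your additional remark on non-vacuity is a nice touch but not required.
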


\begin{rem}\label{rem:juh}
In~\cite{Juhas1}, Juh\'asz studied the following subclasses of large-type Artin groups:
\begin{itemize}
\item \emph{extra-large-type} Artin groups, i.e.\ such Artin groups for which $m_{st}\ge4$ for all $s,t\in S$, and
\item the so-called \emph{CLTTF} Artin groups, which were first introduced in~\cite{Crisp1}, and are defined as large-type Artin groups for which the presentation graph $\Gamma$ is connected and has no triangles.
\end{itemize}
For these Artin groups Juh\'asz proved that if $\phi$ is a length-preserving automorphism, then $R(\phi)=\infty$. (An automorphism is called \emph{length-preserving}, if the word length of $\phi(g)$ with respect to a given generating set of a group is equal to the word length of $g$, for each element $g$ of the group.)  In particular, if $\Aut(A[\Gamma])=\Aut_\Gamma(A[\Gamma])$, then each $\phi\in\Out(A[\Gamma])$ has a length-preserving representative in $\Aut(A[\Gamma])$. On the other hand, Lemma~2.1 of~\cite{FeGoDa1} implies that if $R(\phi_\alpha)=\infty$ for all representatives $\{\phi_\alpha\}_{\alpha\in\Out(G)}\subseteq\Aut(G)$ of $\Out(G)$, then $G$ has property $R_\infty$. Combining this with our knowledge of automorphism groups given in Theorems~\ref{ThmAutForLTFOI} and \ref{thm:xxxl} and the above-cited results of Juh\'asz, we get another proof that the following classes of Artin groups have property $R_\infty$:
\begin{enumerate}
\item extra-large-type free-of-infinity Artin groups;
\item XXXL-type Artin groups with $\Gamma$ twistless. 
\end{enumerate}
Notice that these classes are subsumed in Corollaries~\ref{cor:ltfoiht} and \ref{cor:xxxl}. Remarkably, Juh\'asz was using techniques of small cancellation, which capture the non-positive geometry of the presentation complex of a group. Our Theorem~\ref{Thm:SmallAutImpliesRinfty} uses action of groups on  Gromov-hyperbolic spaces instead.
\end{rem}

\section{Proof of Delzant's Lemma}\label{app}

Delzant's Lemma appeared in~\cite[Lemma\,3.4]{LevLus1} with a brief proof sketch, which is repeated (essentially verbatim) in several subsequent papers on the 
$R_\infty$ property, see~\cite{Felsh1,FeGoDa1,CalSor1}. We provide a detailed proof of it here. While our construction differs from that of~\cite{LevLus1}, the underlying idea, namely, using the stable translation length to produce infinitely many conjugacy classes, is the same. We thank R\'emi Coulon, Mark Hagen and Gilbert Levitt for useful discussions on this matter. 

We note that an unpublished preprint of Taback and Whyte~\cite{TabWhy1} outlines a different approach to establishing the existence of infinitely many distinct conjugacy classes in a suitable coset of isometries of $X$. They use it to deduce the $R_\infty$ property for groups acting characteristically on a Gromov-hyperbolic space.

\begin{delzantlemB}
Let $\Gamma$ be a group acting non-elementarily by isometries on a geodesic Gromov-hyperbolic space (which is not assumed to be proper), and let $K$ be a normal subgroup of\/ $\Gamma$ such that the quotient $\Gamma/K$ is abelian. Then every coset of $K$ contains infinitely many conjugacy classes.  
\end{delzantlemB}

We recall the definition and the properties of the Gromov product and the Gromov boundary $\partial X$. For additional information the reader is referred to~\cite{Coulo1} or \cite{Vaisa1}.

Throughout this section we let $(X,d)$ be a geodesic Gromov $\delta$-hyperbolic space, not necessarily proper. Let $o\in X$ be a fixed origin. 
For arbitrary $x,y\in X$, the \emph{Gromov product} of $x$ and $y$ at $o\in X$ is
\[
(x,y)_o:=\frac12\left(d(x,o)+d(y,o)-d(x,y)\right).
\]

A sequence of points $(x_n)$ of $X$ is called a \emph{Gromov sequence}, if $(x_n,x_m)_o\to\infty$ as $n\to\infty$ and $m\to\infty$. (I.e.\ for every $R>0$, there is $N>0$ such that for all $n,m\ge N$, we have $(x_n,x_m)_o\ge R$.) This notion does not depend on the choice of the origin $o\in X$, since for any other basepoint $o'\in X$ we have
$\left|(x_n,x_m)_o-(x_n,x_m)_{o'}\right|\le d(o,o')$, by~\cite[Lemma\,2.8(4)]{Vaisa1}.

Two Gromov sequences $(x_n)$ and $(y_n)$ are called \emph{equivalent}, if $(x_n,y_n)_o\to\infty$ as $n\to\infty$. This is indeed an equivalence relation: symmetry and reflexivity are obvious, and transitivity follows from the property
$(x,z)_o\ge\min\left((x,y)_o,(y,z)_o\right)-\delta$, for any $x,y,z\in X$, see~\cite[Prop.\,III.H.1.22]{BriHae1}. 

The \emph{Gromov boundary $\partial X$} of $X$ is defined to be the set of equivalence classes of Gromov sequences in $X$. If $\xi$ is the equivalence class of a Gromov sequence $(x_n)$, we say that $(x_n)$ \emph{converges to $\xi$}. 
\begin{lem}\label{lem:bdrygp}
Let $(x_n)$, $(y_n)$ be Gromov sequences in $X$, and let $\xi,\zeta\in\partial X$ be their respective equivalence classes. Then: $\xi\ne\zeta$ if and only if the set
$\left\{\,(x_i,y_j)_o\,\left|\vphantom{(x_i,y_j)}\right.\, i,j\in\N\,\right\}$
is bounded from above.
\end{lem}
\begin{proof}
Assume $\left\{\,(x_i,y_j)_o\,\left|\vphantom{(x_i,y_j)}\right.\, i,j\in\N\,\right\}$ is unbounded. Then for each $k\in\N$ there exist indices $i_k$, $j_k$ such that $\left(x_{i_k},y_{j_k}\right)_o\ge k$.
We claim that in this case $i_k\to\infty$ as $k\to\infty$. If not, then some index $i_0$ occurs infinitely often among the $i_k$. Pass to a subsequence (which we still denote by $k$). Then we will have for all these $k$, $\left(x_{i_0},y_{j_k}\right)_o\ge k$. But 
$\left(x_{i_0},y_{j_k}\right)_o\le d\left(x_{i_0},o\right)$ (see e.g.~\cite[Lemma\,2.8(3)]{Vaisa1}), which yields a contradiction. Thus $i_k\to\infty$ and, by symmetry, $j_k\to \infty$ as $k\to \infty$. Therefore we may assume that both $(i_k)$ and $(j_k)$ are strictly increasing. Thus, the inequality $\left(x_{i_k},y_{j_k}\right)_o\ge k$ implies that subsequences $(x_{i_k})$ and $(y_{j_k})$ are equivalent. By~\cite[Lemma\,5.3(1)]{Vaisa1}, any Gromov sequence is equivalent to each of its subsequences, which implies that $(x_n)$ and $(y_n)$ are equivalent, and $\xi=\zeta$.

Now assume that $\xi=\zeta$. By definition, this means that $(x_n,y_n)_o\to\infty$, which makes the set\\ $\left\{\,(x_i,y_j)_o\,\left|\vphantom{(x_i,y_j)}\right.\, i,j\in\N\,\right\}$ unbounded.
\end{proof}

We will need an estimate of the length between the endpoints of a polygonal path in terms of Gromov products of its intermediate points.

\begin{lem}[\protect{\cite[Lemma\,3.6]{GiMiOs1}}]\label{lem:gmo}
Let $p_0,\dots,p_r\in X$. Suppose there exists $\kappa>0$ such that, for every $1\le i\le r-1$,
\[
d(p_{i-1},p_{i+1})\ge \kappa+2\delta+\max\left\{d(p_{i-1},p_{i}),d(p_{i},p_{i+1})\right\}.
\]
Then $d(p_0,p_r)\ge \kappa r$.
\pushQED{\qed}\qedhere\popQED
\end{lem}

\begin{corl}\label{cor:gmo}
Let $p_0,\dots,p_r\in X$ and suppose that there exists $C>0$ such that for all $1\le i\le r-1$, 
\[
\left(p_{i-1},p_{i+1}\right)_{p_i}\le C.
\]
Let $L=\min\limits_{1\le i\le r}d\left(p_{i-1},p_i\right)$ and suppose that $L>2C+2\delta$. Then for $\kappa=L-(2C+2\delta)>0$ we have
\[
d(p_0,p_r)\ge\kappa r.
\]
\end{corl}
\begin{proof}
From the definition of Gromov product, we have
\[
d\left(p_{i-1},p_{i+1}\right)=d\left(p_{i-1},p_{i}\right)+d\left(p_{i},p_{i+1}\right)-2\left(p_{i-1},p_{i+1}\right)_{p_i}.
\]
Denote
\[
M=\max\left(d\left(p_{i-1},p_{i}\right),d\left(p_{i},p_{i+1}\right)\right),\qquad
m=\min\left(d\left(p_{i-1},p_{i}\right),d\left(p_{i},p_{i+1}\right)\right).
\]
Then 
\[
d\left(p_{i-1},p_{i+1}\right)=M+m-2\left(p_{i-1},p_{i+1}\right)_{p_i}\ge M+m-2C\ge M+L-2C=M+\kappa+2\delta, 
\]
since $m\ge L$ and $L-2C=\kappa+2\delta$. Now we can apply Lemma~\ref{lem:gmo} and conclude that $d(p_0,p_r)\ge \kappa r$.
\end{proof}

Recall that an isometry $g$ of $X$ is called \emph{loxodromic}, if the map from $\Z$ to $X$ which sends $m$ to $g^mx$ is a quasi-isometric embedding for some (and hence any) point $x\in X$. If $g$ is loxodromic, then it defines two points on the boundary $\partial X$, which are denoted $g^{\infty}$ and $g^{-\infty}$, namely, the equivalence classes of Gromov sequences $(g^nx)$ and $(g^{-n}x)$, $n\in\N$, respectively. The fact that these sequences are Gromov and that the resulting boundary points are independent of the choice of 
$x\in X$ follows, for instance, from~\cite[Proposition\,2.5\,(ii)]{Coulo1} and \cite[Lemma\,5.3(6)]{Vaisa1}.

Remarkably, the condition of Corollary~\ref{cor:gmo} is satisfied for orbits of independent loxodromic elements, as the next lemma shows.
\begin{lem}\label{lem:loxgpbound}
Let $a$, $b$ be loxodromic isometries of $X$ with $\left\{a^\infty, a^{-\infty}\right\}\cap\left\{b^\infty,b^{-\infty}\right\}=\varnothing$. Fix $o\in X$. Then there exists $C=C(a,b,o)>0$ such that for all $m,n\ge1$ and all choices of signs, 
\[
(a^{\pm m}o, b^{\pm n}o)_o\le C, \quad
(a^{m}o,\; a^{-n}o)_o \le C,
\quad\text{and}\quad
(b^{m}o,\; b^{-n}o)_o \le C.
\]
\end{lem}

\begin{proof}
First consider Gromov sequences for fixed $ \epsilon,\epsilon'\in\{\pm1\}$:
\[
x_m=a^{\epsilon m}o,\qquad y_n=b^{\epsilon' n}o, \qquad \text{with\quad$ m,n\ge1$}.
\]
Since $a$ is loxodromic, $a^{\epsilon m}o$ converges to $a^{\epsilon\,\infty}$. Similarly, $b^{\epsilon'n}o$ converges to $b^{\epsilon'\,\infty}$. Since $\left\{a^\infty, a^{-\infty}\right\}\cap\left\{b^\infty,b^{-\infty}\right\}=\varnothing$, we have $a^{\epsilon\,\infty}\ne b^{\epsilon'\,\infty}$. By Lemma~\ref{lem:bdrygp}, for each choice of $\epsilon,\epsilon'\in\{\pm1\}$, there exist a constant $C_{\epsilon\epsilon'}>0$ such that $\bigl(a^{\epsilon m}o,b^{\epsilon' n}o\bigr)_o\le C_{\epsilon\epsilon'}$ for all $m,n\in\N$. Then for $C_0=\max_{\epsilon,\epsilon'\in\{\pm1\}}\left(C_{\epsilon\epsilon'}\right)$, we will have the required inequality $\left(a^{\pm m}o, b^{\pm n}o\right)_o\le C_0$.

Now consider Gromov sequences
\[
x_m=a^mo\to a^\infty,\qquad y_n=a^{-n}o\to a^{-\infty},\qquad m,n\ge1.
\]
Since $a$ is loxodromic, $a^\infty\ne a^{-\infty}$. Therefore, by Lemma~\ref{lem:bdrygp}, there is a constant $C_a>0$ such that $(a^{m}o,\; a^{-n}o)_o \le C_a$ for all $m,n\ge1$. 

Similarly, there exist $C_b>0$ such that $(b^{m}o,\,b^{-n}o)_o \le C_b$ for all $m,n\ge1$.

Taking $C=\max(C_0,C_a,C_b)$ finishes the proof.
\end{proof}

\begin{rem}
    Notice that the set $\{\,(a^no,a^mo)_o\mid m,n\ge1\}$ is unbounded, since for any loxodromic element $a$, the sequence $(a^no)$ is Gromov.
\end{rem}

We will also need a useful property of the stable translation length, which we now recall. 

Let $o\in X$ be fixed. For an isometry $g$ of $X$, the \emph{stable translation length} is 
\[
\tau(g)=\lim_{n\to\infty}\frac {d(o,g^n o)}{n}.
\]
It is known that $\tau(g)$ always exists and is independent of $o\in X$, see e.g.~\cite[Ex.II.6.6(1)]{BriHae1}. Since $d(o, (hgh^{-1})^no)=d(h^{-1}o,g^nh^{-1}o)$, it follows that $\tau(hgh^{-1})=\tau(g)$ for all isometries $g,h$ of $X$.

\begin{lem}\label{lem:stl}
For any isometry $g$ and any point $x\in X$, we have:
\[
d(x,g^n x)\ge |n|\,\tau(g), \quad\text{for all\ \ }n\in\Z.
\]
\end{lem}
\begin{proof}
Denote $f(n)=d(x,g^nx)$, and assume first that $n\ge 1$. Then, for any $k\ge1$ we have:
\[
f(k\,n)=d\left(x,g^{k\,n}x\right)\le\sum_{i=0}^{k-1}d\left(g^{i\,n}x,g^{(i+1)n}x\right)=\sum_{i=0}^{k-1}d\left(x,g^nx\right)=k\,f(n).
\]
Dividing both sides by $k\,n$, we get:
\[
\frac{f(k\,n)}{k\,n}\le \frac{f(n)}{n}.
\]
Taking $k\to\infty$, we notice that $k\,n\to\infty$ as well, and, by the definition of $\tau(g)$, we get $\frac{f(k\,n)}{k\,n}\to \tau(g)$. Since the right-hand side of the above inequality is independent of $k$, we conclude that $\tau(g)\le\frac{f(n)}{n}$, and hence $f(n)\ge n\,\tau(g)$.

Now, if $n<0$, then, using the fact that $g^n$ is an isometry and that $d$ is symmetric, we get: $f(n)=d(x,g^nx)=d(g^{-n}x,x)=d(x,g^{-n}x)=f(-n)$, which implies that $f(n)\ge |n|\,\tau(g)$ for all $n\in\Z$.
\end{proof}

Now we are ready to prove that sufficiently large powers of independent loxodromic elements generate a quasi-isometrically embedded nonabelian free group. Our proof is modeled on~\cite[Lemma~3.2]{TayTio1}, using the auxiliary results established above.

\begin{prop}\label{prop:qi-emb}
Let $a,b\in\Gamma$ be independent loxodromic elements, i.e.\ such that 
\[\left\{a^\infty,a^{-\infty}\right\}\cap\left\{b^\infty,b^{-\infty}\right\}=\varnothing.
\]
Then, for some $N\ge1$, the elements $A=a^N$, $B=b^N$
freely generate the rank $2$ free group $\langle A,B\rangle\simeq F_2$. Moreover, there exists $\kappa>0$ such that for any reduced word $W$ over $\left\{A^{\pm1}, B^{\pm1}\right\}$ with length $|W|$, we have 
\[
d\bigl(o,W\,o\bigr)\ge\kappa\, |W|.
\]
Therefore, the orbit map $\Phi\colon\langle A,B\rangle\to X$, $g\mapsto g\,o$, is a quasi-isometric (indeed, a bi-Lipschitz) embedding.
\end{prop}

\begin{proof}
Let $o\in X$ be fixed, and for the given independent loxodromic $a$ and $b$ let $C$ be the constant given by Lemma~\ref{lem:loxgpbound}. Let $N\in \N$ be arbitrary and let $A=a^N$, $B=b^N$. For a reduced word $W=s_1\dots s_n$ with $s_i\in\left\{A^{\pm1},\,
B^{\pm1}\right\}$, let $p_0=o$ and $p_i=s_1\dots s_i\,o$, for $i=1,\dots,n$. Notice that $s_{i+1}\ne s_i^{-1}$ for all $i$, so the pair $(s_i^{-1}, s_{i+1})$ is either $(A^{\pm1},B^{\pm1})$,  or $(A,A^{-1})$, $(A^{-1},A)$, $(B,B^{-1})$, $(B^{-1},B)$, and the constant $C$ from Lemma~\ref{lem:loxgpbound} bounds $(s_i^{-1}o,s_{i+1}o)_o$ in all cases. Hence we have:
\begin{multline*}
\left(p_{i-1},p_{i+1}\right)_{p_i}=
\frac12\left(d(p_{i-1},p_i)+d(p_{i+1},p_i)-d(p_{i-1},p_{i+1})\right)=\\ \frac12\left(d(s_i^{-1}o,o)+d(s_{i+1}o,o)-d(s_i^{-1}o,s_{i+1}o)\right)=\left(s_i^{-1}o,s_{i+1}o\right)_o\le C.
\end{multline*}

By Lemma~\ref{lem:stl}, $d(o,A^{\pm 1}o)\ge |N|\,\tau(a)$ and $d(o,B^{\pm 1}o)\ge |N|\,\tau(b)$. Since $a$ and $b$ are loxodromic, $\tau(a)$ and $\tau(b)$ are positive, and we can choose $N$ large enough so that \[
L=\min\left\{d\left(o,A^{\pm 1}o\right),\,d\left(o,B^{\pm 1}o\right)\right\}> 2C+2\delta.
\]
Now we can apply Corollary~\ref{cor:gmo} with $\kappa = L-(2C+2\delta)>0$ and conclude that $d(o,W\,o)\ge n\,\kappa$. 

This implies that the group generated by $\langle A,B\rangle$ is free of rank $2$, since a nontrivial reduced word $W$ in $A^{\pm1},B^{\pm1}$ cannot move $o$ by a positive distance and be the identity.

To prove that the orbit map $\Phi\colon\langle A,B\rangle\to X$, $g\mapsto g\,o$, is a quasi-isometric embedding, we need to show that there exist constants $\lambda>0$ and $\varepsilon\ge0$ such that 
\[
\frac{1}{\lambda} 
\,d_{Cay}(g,h)-\varepsilon\le d\left(\Phi(g),\Phi(h)\right)\le \lambda\, d_{Cay}(g,h)+\varepsilon,
\]
for all $g,h\in \langle A,B\rangle$, where $d_{Cay}$ is the path metric in the corresponding Cayley graph of $\langle A,B\rangle$.
Let $|\cdot|$ denote the word length of $W$ in the generators $\left\{A^{\pm1},B^{\pm1}\right\}$. The above double inequality is equivalent to 
\[
\frac{1}{\lambda} 
\,|W|-\varepsilon\le d\left(\vphantom{W'}o,W\,o\right)\le \lambda\, |W|+\varepsilon,
\]
where $W=g^{-1}h$ is an arbitrary word in $\langle A,B\rangle$. Notice that we already proved that $d(o,W\,o)\ge \kappa|W|$, so only the upper bound needs to be shown.

Denote $M=\max\left\{d(o,A\,o),\,d(o,A^{-1}o),\,d(o,B\,o),\,d(o,B^{-1}o)\right\}$. Then (with the above notation for $p_i=s_1\dots s_i\,o$):
\[
d(\vphantom{W'}o,W\,o)=d\left(p_0,p_n\right)\le\sum_{i=0}^{n-1}d\left(p_i,p_{i+1}\right)=
\sum_{i=0}^{n-1}d\left(o,s_{i+1}o\right)\le M\cdot n=M\cdot|W|.
\]
This shows that for all $W$, we have:
\[
\kappa\cdot |W|\le d(o,W\,o)\le M\cdot |W|,
\]
which establishes a quasi-isometric embedding with $\lambda=\max(M,1/\kappa)$ and $\varepsilon=0$, i.e.\ a bi-Lipschitz embedding.
\end{proof}

In what follows we identify the free group $H$ with its Cayley graph, which is a Gromov-hyperbolic metric space (a tree).

\begin{corl}\label{cor:phi}
For $H=\langle A,B\rangle$ as in Proposition~\ref{prop:qi-emb}, the orbit map\/ $\Phi\colon H\to X$, $g\mapsto g\,o$, induces an injective continuous map between the boundaries\/ $\Phi^*\colon\partial H\hookrightarrow\partial X$. Moreover, $\Phi^*$ is a homeomorphism of $\partial H$ onto its image\/ $\Phi^*(\partial H)\subset \partial X$.
\end{corl}
\begin{proof}
We refer the reader to \cite[Section\,5]{Vaisa1} for the definition and  properties of the topology on $\partial X$ (for $X$ not necessarily proper). In particular, in Section~5.32 V\"ais\"al\"a proves that any quasi-isometric embedding between Gromov-hyperbolic length spaces $f\colon X\to Y$ induces an injective continuous map between the boun\-da\-ri\-es $f^\ast\colon\partial X\hookrightarrow \partial Y$. (Note that  V\"ais\"al\"a calls length spaces `intrinsic' and quasi-isometric embeddings `quasi-isometries'.)
Applying this to the quasi-isometric embedding $\Phi\colon H\to X$ yields an injective
continuous map $\Phi^*\colon\partial H\hookrightarrow\partial X$.

It remains to show that $\Phi^*$ is a homeomorphism onto its image.
Since $\partial H$ is homeomorphic to a Cantor set, it is compact; and since $\partial X$ is metrizable (see~\cite[Proposition\,5.31]{Vaisa1}), it is  Hausdorff. 
Since a continuous bijection from a compact space to a Hausdorff space is a homeomorphism, we conclude that $\Phi^*$ is a homeomorphism of $\partial H$ onto its image $\Phi^*(\partial H)\subset \partial X$.
\end{proof}

To make the topology on $\partial H$
concrete, we describe it using geodesic rays in the Cayley graph of the free group, treating them as infinite freely reduced words in the generators of $H$.

Recall that for any proper geodesic Gromov-hyperbolic space $Y$ one can define $\partial Y$ as the set of equivalence classes of geodesic rays $c\colon[0,\infty)\to Y$, with two such rays $c,c'$ being equivalent if and only if $\sup_td(c(t),c'(t))$ is finite, see~\cite[III.H.3]{BriHae1}. Under this definition, a basic neighborhood of a point of $\partial Y$ represented by a geodesic ray $c_0\colon[0,\infty)\to Y$ with $c_0(0)=o$ is defined as
\[
V_n(c_0)=\{\text{\,geodesic rays $c\colon[0,\infty)\to Y$ with $c(0)=o$ and $d(c_0(n),c(n))<k$\,}\},
\]
where $k>2\delta$ is a fixed constant and $n\ge 1$ is arbitrary, see~\cite[Lemma\,III.H.3.6]{BriHae1}. 

For $H=\langle A,B\rangle$ a free group, we identify $Y$ with the corresponding Cayley graph on generators $A,B$, and these constructions simplify significantly. First, the condition $\sup_td(c(t),c'(t))<\infty$, which establishes the equivalence of geodesic rays, means that the rays $c,c'$ coincide for all $t\ge t_0$ for big enough $t_0$. Thus one can pick a representative of $\partial Y$ as a geodesic ray starting at $1$, or, equivalently, as an infinite freely reduced word in $\{\, A^{\pm1},B^{\pm1}\,\}$.

Since the Cayley graph for $H$ is $\delta$-hyperbolic with $\delta=0$, we can choose $k>2\delta$ to be in between $0<k<1$, and then the condition $d(c_0(n),c(n))<k$ in the definition of neighborhoods $V_n(c_0)$ means that $c_0(n)=c(n)$, which implies that rays $c_0$ and $c$ share the initial segment of length $n$. Hence, we can identify neighborhoods $V_n(c_0)$ with the sets (called `cylinder neighborhoods', see e.g.~\cite{Kapov1}):
\[
\Cyl(v)=\left\{\,\text{infinite reduced words in $\{\,A^{\pm1},B^{\pm1}\,\}$ whose initial segment is the word $v$}\,\right\}.
\]
Since, obviously, $\Cyl(1)=\partial H$, and for every $v\in H$,
\[
\Cyl(v)=\bigsqcup_{\left\{x\in \{A^{\pm1},B^{\pm1}\}\,:\,|vx|=|v|+1\right\}}\Cyl(vx),
\]
we see that every set $\Cyl(v)$ is open and closed, and these sets comprise a basis of topology on $\partial H$.

Notice that by~\cite[Proposition\,5.35]{Vaisa1} any isometry $u\in\Isom(X)$ induces a homeomorphism $u^*$ of $\partial X$. To ease notation, in what follows we denote $u^*$ with the same letter $u$.

\begin{lem}\label{lem:uc}
Let $X$ and $\Gamma$ be as before and let $H=\langle A,B\rangle$ be a rank $2$ free subgroup of $\Gamma$ such that its orbit map $\Phi\colon H\to X$, $g\mapsto go$ is a quasi-isometric embedding. Then for any $u\in\Isom(X)$ there exists a loxodromic element $c\in H$ such that $u\,c^{\infty}\ne c^{-\infty}$.
\end{lem}

\begin{proof}
Since $\Phi\colon H\to X$ is a quasi-isometric embedding, each infinite order element $g\in H$ is loxodromic on $X$ (since the map $\mathbb Z\to X$, $n\mapsto g^n o$, is a quasi-isometric embedding itself). 
By Corollary~\ref{cor:phi}, $\Phi^*$ induces a homeomorphism of $\partial H$ onto its image in $\partial X$, hence we may identify $\partial H$ with the corresponding subspace $\Phi^*(\partial H)\subset \partial X$.

Assume now for the sake of contradiction, that 
$u(c^\infty)=c^{-\infty}$ for all loxodromic $c\in\langle A,B\rangle$. In particular, $u(A^\infty)=A^{-\infty}$.

For each $n\ge1$, denote $c_n\coloneqq A^nB$. Then $c_n$ is loxodromic and its attracting point $c_n^\infty\in\partial H$ is represented by the infinite word
\[
c_n^\infty = (A^nB)^\infty = A^n B\, A^n B\,A^n B\dots
\]
Fix $k\ge1$. Notice that for all $n\ge k$, $c_n^\infty$ begins with $A^n$, hence, in particular, with $A^k$. This means that for any fixed $k$ we have
\[
n\ge k\,\Longrightarrow \,c_n^\infty\in \Cyl(A^k).
\]
Since the sets $\Cyl(A^k)$, $k\ge 1$, form a basis of neighborhoods at $A^\infty$, we conclude that $c_n^\infty\to A^\infty$ as $n\to\infty$. Since $u$ acts continuously on $\partial X$, we see that
$u(c_n^\infty)\to u(A^\infty)$,
which implies, due to our assumption $u(c^\infty)=c^{-\infty}$ for all loxodromic $c$, that
\[
c_n^{-\infty}=u(c_n^\infty)\to u(A^{\infty})=A^{-\infty}, \quad\text{as $n\to \infty$.}
\]
On the other hand, $c_n^{-\infty}=(c_n^{-1})^{\infty}$ and $c_n^{-1}=B^{-1}A^{-n}$, so that
\[
c_n^{-\infty} = B^{-1}A^{-n}B^{-1}A^{-n}B^{-1}A^{-n}\ldots\,\in \Cyl(B^{-1}), \quad\text{for all $n\ge1$.}
\]
But $A^{-\infty}\in \Cyl(A^{-1})$, and $\Cyl(A^{-1})\cap\Cyl(B^{-1})=\varnothing$. Since $\Cyl(B^{-1})$ is closed, a sequence contained in $\Cyl(B^{-1})$ cannot converge to a point outside of it. This contradicts the established claim that $c_n^{-\infty}\to A^{-\infty}$, thus yielding a contradiction. Hence there exists a loxodromic $c\in \langle A,B\rangle$ such that $u\,c^\infty\ne c^{-\infty}$.
\end{proof}

\begin{prop}\label{prop:lin-tau}
Let $u\in \Isom(X)$ and let $c\in \Isom(X)$ be loxodromic such that
$u\bigl(c^{\infty}\bigr)\neq c^{-\infty}$.
Then there exist $n_0\in\N$ and a constant $C\ge 0$ such that for all $n\ge n_0$, for the stable translation length of the elements $uc^n$ we will have the estimate:
\[
\tau(u c^n) \ge n\,\tau(c) - C.
\]
In particular, $\tau(uc^n)\to\infty$ as $n\to\infty$.
\end{prop}

\begin{proof} For $n\ge 1$ denote $g_n=u c^n$. Fix $o\in X$ and consider the sequences
\[
x_n=g_n o= u c^n o,
\qquad
y_n=g_n^{-1}o=c^{-n}u^{-1}o.
\]
Since $c$ is loxodromic, $(c^n o)$ and $(c^{-n}u^{-1}o)$ are Gromov sequences, representing $c^{\infty}$ and $c^{-\infty}$, respectively; hence $(x_n)$ and $(y_n)$ are Gromov sequences representing $u(c^{\infty})$ and $c^{-\infty}$.
Since by the assumption $u(c^{\infty})\neq c^{-\infty}$, Lemma~\ref{lem:bdrygp} implies that there exists $C_0>0$ such that
\[
(y_n,x_n)_o=\bigl(g_n^{-1}o,g_n o\bigr)_o\le C_0
\qquad\text{for all }n\ge 1.
\]
Fix such an $n$ and let $r\ge 1$. Define points
\[
p_i:=g_n^i\,o \qquad \text{for\,\, }i=0,1,\dots,r.
\]
For each $i=1,\dots,r-1$, a similar computation as in the proof of Proposition~\ref{prop:qi-emb} gives: 
\[
\left(p_{i-1},p_{i+1}\right)_{p_i}
=
\bigl(g_n^{-1}o,g_n o\bigr)_o
\le C_0.
\]
Moreover, each segment $[p_{i-1},p_i]$ has the same length:
\[
d(p_{i-1},p_i)=d\left(g_n^{i-1}o,g_n^i o\right)=d\!\left(o,g_n o\right),
\]
Therefore the quantity $L=\min_{1\le i\le r} d\left(p_{i-1},p_i\right)$ appearing in Corollary~\ref{cor:gmo} is equal to
\[
L=d(o,g_n o)=d(o,uc^n o).
\]
We claim that for every $n\ge 1$,
\begin{equation}\tag{*} \label{eq:ucn-lower}
d(o,g_n o)=d(o,uc^n o) \ge n\,\tau(c)-d(o,uo).
\end{equation}
Indeed, by the triangle inequality,
\[
d(o,uc^n o)=d(u^{-1}o,c^n o)\ge d(o,c^n o)-d(o,u^{-1}o)=d(o,c^n o)-d(o,uo),
\]
and by Lemma~\ref{lem:stl} we have $d(o,c^n o)\ge n\,\tau(c)$, which shows~\eqref{eq:ucn-lower}.

Since $\tau(c)>0$, we can choose $n_0$ so large that $n\,\tau(c)-d(o,uo)>2C_0+2\delta$ for all $n\ge n_0$.
Then~\eqref{eq:ucn-lower} implies that for all $n\ge n_0$,
\[
L=d(o,g_n o)>2C_0+2\delta.
\]
Therefore Corollary~\ref{cor:gmo} applied to $\left(p_i\right)_{i=0}^r$ yields for all $r\ge 1$:
\[
d(o,g_n^r o)=d(p_0,p_r) \ge \kappa\,r,
\]
where $\kappa=d(o,g_n o)-(2C_0+2\delta)>0$.
Dividing the above inequality by $r$ and letting $r\to\infty$ gives
\[
\tau(g_n)\ge\kappa= d(o,g_n o)-(2C_0+2\delta).
\]
Combining with~\eqref{eq:ucn-lower} we obtain that, for all $n\ge n_0$,
\[
\tau(g_n)\ge n\,\tau(c) -\bigl(d(o,uo)+2C_0+2\delta\bigr).
\]
This proves the Proposition if we take $C=d(o,uo)+2C_0+2\delta$, which is independent of $n$.
\end{proof}

Now we are ready to prove Delzant's Lemma.
\begin{proof}[Proof of Delzant's Lemma]
Let $X$ be a geodesic Gromov-hyperbolic space on which $\Gamma$ acts non-ele\-men\-ta\-ri\-ly. Fix a basepoint $o\in X$ and let $a,b\in\Gamma$ be independent loxodromics. Then by Proposition~\ref{prop:qi-emb}, there exist $N\ge1$ such that $A=a^N$, $B=b^N$ generate a rank $2$ free group $H=\langle A,B\rangle$, whose orbit map $\Phi\colon H\to X$, $g\mapsto g\,o$ is a quasi-isometric embedding. Since $\Gamma/K$ is abelian, $[\Gamma,\Gamma]\subseteq K$, and, in particular, $[H,H]\subseteq K$. Since $[H,H]$ is free, we can choose a rank $2$ subgroup $H'\le[H,H]$. Since $H'$ is a finitely generated subgroup of the free group $H$, it is quasi-convex in $H$ and hence quasi-isometrically embedded into $H$. Composing the inclusion $H'\hookrightarrow H$ with the quasi-isometric embedding $\Phi\colon H\to X$, we conclude that $H'$ is quasi-isometrically embedded into $X$ as well, and, moreover, $H'\le K$.

Fix any coset $uK\in \Gamma/K$. By Lemma~\ref{lem:uc}, there exists a loxodromic element $c\in H'$ such that $u\,c^\infty\ne c^{-\infty}$. By Proposition~\ref{prop:lin-tau}, there exist $n_0\in\N$ and $C\ge0$ such that for the stable translation length of the element $uc^n\in uK$ we have
\[
\tau(uc^n)\ge n\,\tau(c)-C,
\]
for all $n\ge n_0$. In particular, $\tau$ attains infinitely many values on the set $\{\, uc^n\mid n\in\N\,\}$. Since $\tau$ is invariant under conjugation, we conclude that the coset $uK$ meets infinitely many conjugacy classes of~$\Gamma$. 

Let $g\in uK$. Then $g=uk$ for some $k\in K$, and for an arbitrary $h\in\Gamma$ we have:
\[
hgh^{-1} = h\,uk\,h^{-1} = huh^{-1}\,\, hkh^{-1}\in huh^{-1}K=uK,
\]
since $\Gamma/K$ is abelian. We conclude that if $g\in uK$, then the whole conjugacy class of $g$ lies in $uK$. This proves that $uK$ contains infinitely many conjugacy classes of $\Gamma$.
\end{proof}

\section{Some conjectures}\label{sec5}

A careful analysis of known groups that have property $R_\infty$ and those that do not have it,
reveals an interesting pattern: all known finitely generated residually finite groups without property $R_\infty$ are solvable-by-finite. 
Fel'shtyn and Troitsky expressed this observation as the following  conjecture~\cite{FelTro2} (see also~\cite[Problem~19.28]{KouNot}):

\begin{conjR}
A finitely generated residually finite group either has property $R_\infty$ or is solvable-by-finite.
\end{conjR}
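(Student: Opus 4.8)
A proof of Conjecture~(R) seems out of reach at present; what follows is only an indication of the shape such a proof might take. The natural line is to argue the contrapositive: every finitely generated residually finite group $G$ that is not solvable-by-finite has property~$R_\infty$. The machinery of this paper suggests a uniform mechanism for the positive direction --- locate a \emph{characteristic} quotient $\bar G = G/N$ with $Z(\bar G) = 1$ together with an isometric action of $\Aut(\bar G)$ on a Gromov-hyperbolic space under which $\Inn(\bar G)\simeq\bar G$ acts non-elementarily; then Corollary~\ref{cor:7} yields property $R_\infty$ for $\bar G$, and Lemma~\ref{lem:epi} applied with $H=N$ transports it to $G$. Under this reduction the conjecture becomes the structural statement that every finitely generated residually finite group which is not solvable-by-finite admits such a characteristic quotient.

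Concretely one would proceed as follows. First, isolate a ``solvable radical'', namely the largest characteristic solvable-by-finite normal subgroup $N$ of $G$; this step is already delicate, since for finitely generated residually finite groups --- unlike for linear groups --- it is not clear that such a maximal subgroup exists, nor that $\bar G = G/N$ has trivial center. Second, show that $\bar G$ falls into one of two regimes: either it is finite, which forces $G$ itself to be solvable-by-finite and contradicts the hypothesis, or it is ``genuinely non-solvable'', in which case one tries to exhibit $\bar G$ as non-elementary hyperbolic or relatively hyperbolic, or as a mapping-class-group-like or Artin-group-like group acting on a canonical complex, or more generally as acylindrically hyperbolic with no finite normal subgroup --- each such regime being amenable to the action-on-hyperbolic-spaces argument recalled in Section~\ref{sec2}, and in many cases already known to force property $R_\infty$. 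Third, in the remaining cases one must bring in entirely different techniques, such as the self-similarity and branch-group methods cited in the introduction; finally, pass to central quotients where needed, exactly as was done for $A[D_n]$, and conclude via Delzant's Lemma.

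The main obstacle is proving that these regimes are \emph{exhaustive}, that is, establishing a structural dichotomy for finitely generated residually finite groups. There is no Tits-type alternative to lean on: the Grigorchuk group, for instance, is finitely generated, residually finite, not solvable-by-finite (indeed not elementary amenable), and --- being a torsion group --- admits no non-elementary isometric action on any Gromov-hyperbolic space, so the geometric machinery above simply does not detect it. It, and many other weakly branch groups, is consistent with Conjecture~(R) only because it is known, by the self-similarity arguments cited in the introduction, to have property $R_\infty$. Thus a complete proof seems to require first sorting all finitely generated residually finite groups into a ``hyperbolic-like'' part, handled by the geometric argument, and a ``branch-like'' (or otherwise exotic) part, handled by other means, and then proving that these exhaust all possibilities --- and it is this missing classification, rather than any single technical lemma, that makes Conjecture~(R) hard.
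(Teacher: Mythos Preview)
The statement you were asked to address is labelled \emph{Conjecture~(R)} in the paper, and the paper does not prove it; Section~\ref{sec5} merely records the conjecture of Fel'shtyn--Troitsky, notes that it has been verified for groups of finite Pr\"ufer rank, and observes that the weaker Conjecture~(NA) is also open. So there is no ``paper's own proof'' to compare your proposal against.

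You correctly recognise this and explicitly frame your text as ``an indication of the shape such a proof might take'' rather than a proof. As a heuristic discussion it is sensible: the reduction via a characteristic quotient and Corollary~\ref{cor:7} is exactly the mechanism used in this paper, and your identification of the Grigorchuk group as an obstruction to any purely geometric approach is apt. But none of this constitutes progress toward the conjecture --- in particular, the existence of a maximal characteristic solvable-by-finite normal subgroup, the triviality of the center of the resulting quotient, and the claimed exhaustive dichotomy into ``hyperbolic-like'' and ``branch-like'' regimes are all unproved assertions, as you yourself acknowledge. So your text is an honest survey of obstacles, not a proof, and should not be presented as one.
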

Note that the conjunction `or' in this statement is not exclusive, since there are many groups known to have property $R_\infty$ while being solvable-by-finite (e.g.\ the free nilpotent groups $N_{r,c}$ of rank $r$ and nilpotency class $c\ge2r$~\cite{DekGon1}). In a recent paper~\cite{Troit1} Conjecture (R) was proved for all groups with finite upper (Pr\"ufer) rank.

Since solvable-by-finite groups are amenable, a weaker variant of Conjecture (R)  would be
\begin{conjNA}
A finitely generated residually finite non-amenable group has property $R_\infty$.
\end{conjNA}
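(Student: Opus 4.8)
Conjecture~(NA) is open, so what follows is an attack strategy rather than a proof. The plan is to feed a hypothetical counterexample into the reduction machinery of Section~\ref{sec2}. Fix a finitely generated residually finite non-amenable group $G$ and an arbitrary $\varphi\in\Aut(G)$. By Lemma~\ref{lem:epi} we may replace $G$ by any quotient whenever convenient, and by Lemmas~\ref{lem:embeds} and~\ref{lem:cosets} it suffices to produce, inside the group $G_\varphi$ (or a suitable quotient of it), a normal subgroup $K$ with abelian quotient together with a non-elementary action by isometries on a Gromov-hyperbolic space; Delzant's Lemma would then finish the argument exactly as in Corollary~\ref{cor:7}. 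So the whole difficulty is concentrated in manufacturing such a geometry out of the hypotheses ``non-amenable'' and ``residually finite''.

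\textbf{Step 1: a geometric branch.} The first thing I would try is to show that a large class of finitely generated residually finite non-amenable groups admits a non-elementary action on some Gromov-hyperbolic space that is compatible with $\Out(G)$ in the sense required by Corollary~\ref{cor:7}. For acylindrically hyperbolic $G$ this branch already succeeds — such groups are known to have property $R_\infty$, and the relevant action extends over $G_\varphi$. One would similarly hope to cover groups acting nicely on CAT(0) cube complexes or on the curve-complex-type spaces used for mapping class groups, in the spirit of Sections~\ref{sec3} and~\ref{sec4}. The obstacle is that this dichotomy simply fails in general: higher-rank lattices such as $\mathrm{SL}_n(\mathbb{Z})$ for $n\ge 3$, nontrivial direct products, and various just-infinite groups are finitely generated, residually finite and non-amenable, yet carry no useful non-elementary hyperbolic action, so any honest proof must branch here.

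\textbf{Step 2: a representation-theoretic branch.} For groups outside the geometric branch I would fall back on the twisted Burnside--Frobenius philosophy: the expectation is that $R(\varphi)$ coincides with the number of fixed points of the induced map $\hat\varphi$ on an appropriate dual object, and residual finiteness guarantees a rich supply of finite-dimensional representations on which to run such a count. The task then becomes to convert non-amenability into the statement that $\hat\varphi$ has infinitely many fixed points — for instance via nonvanishing of reduced bounded cohomology, or, in the arithmetic case, via (super)rigidity forcing every automorphism to be ``almost inner'' and then analysing the resulting twisted conjugacy directly.

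\textbf{Main obstacle.} The crux, and the reason the conjecture remains open, is that there is no structure theory partitioning finitely generated residually finite non-amenable groups into classes on which one of the two branches above is known to apply; worse, the twisted Burnside--Frobenius theorem itself is established only in special cases, so Step~2 is not currently a theorem one may invoke. In addition, one cannot a priori exclude that the conjecture is false — that some exotic such group admits an automorphism of finite Reidemeister number — so a complete solution seems to require either a new invariant that detects property $R_\infty$ directly from non-amenability plus residual finiteness, or a substantial advance on the twisted Burnside--Frobenius conjecture.
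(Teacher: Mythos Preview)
You correctly flag at the outset that Conjecture~(NA) is open, and indeed the paper does not prove it: it is stated in Section~\ref{sec5} purely as a conjecture, with the explicit remark that an earlier claimed proof (in the preprint~\cite{FelTro1}) was found to contain a mistake and that the statement ``is still a conjecture''. There is therefore no proof in the paper to compare your attempt against.

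What you have written is a reasonable sketch of a research programme, and you are honest about its status. The genuine gaps you yourself identify are exactly the obstructions: Step~1 cannot cover higher-rank lattices or direct products, and Step~2 invokes a twisted Burnside--Frobenius principle that is not a theorem in the required generality. So nothing here constitutes a proof, but nothing is claimed to; your caveat is accurate and matches the paper's own assessment of the problem.
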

This statement appears as a theorem in the preprint~\cite{FelTro1}; however, Evgenij Troitsky has informed us that the proof contains a mistake, so we record it here as a conjecture.


\frenchspacing

\end{document}